\numberwithin{equation}{section}
\numberwithin{figure}{section}
\theoremstyle{plain}
\newtheorem{thm}{\protect\theoremname}[section]
  \theoremstyle{plain}
  \newtheorem{cor}[thm]{\protect\corollaryname}
  \theoremstyle{plain}
  \newtheorem{prop}[thm]{\protect\propositionname}
  \theoremstyle{definition}
  \newtheorem{defn}[thm]{\protect\definitionname}
  \theoremstyle{plain}
  \newtheorem{lem}[thm]{\protect\lemmaname}
  \theoremstyle{remark}
  \newtheorem{rem}[thm]{\protect\remarkname}
  \theoremstyle{remark}
  \newtheorem{claim}[thm]{\protect\claimname}
  \theoremstyle{definition}
  \newtheorem{problem}[thm]{\protect\problemname}
  \providecommand{\claimname}{Claim}
  \providecommand{\corollaryname}{Corollary}
  \providecommand{\definitionname}{Definition}
  \providecommand{\lemmaname}{Lemma}
  \providecommand{\problemname}{Problem}
  \providecommand{\propositionname}{Proposition}
  \providecommand{\remarkname}{Remark}
\providecommand{\theoremname}{Theorem}
\begin{document}

\title{The Congruence Subgroup Problem \\
for the Free Metabelian group on $n\geq4$ generators}

\author{David El-Chai Ben-Ezra}
\maketitle
\begin{abstract}
The congruence subgroup problem for a finitely generated group $\Gamma$
asks whether the map $\widehat{Aut\left(\Gamma\right)}\to Aut(\hat{\Gamma})$
is injective, or more generally, what is its kernel $C\left(\Gamma\right)$?
Here $\hat{X}$ denotes the profinite completion of $X$. It is well
known that for finitely generated free abelian groups $C\left(\mathbb{Z}^{n}\right)=\left\{ 1\right\} $
for every $n\geq3$, but $C\left(\mathbb{Z}^{2}\right)=\hat{F}_{\omega}$,
where $\hat{F}_{\omega}$ is the free profinite group on countably
many generators. 

Considering $\Phi_{n}$, the free metabelian group on $n$ generators,
it was also proven that $C\left(\Phi_{2}\right)=\hat{F}_{\omega}$
and $C\left(\Phi_{3}\right)\supseteq\hat{F}_{\omega}$. In this paper
we prove that $C\left(\Phi_{n}\right)$ for $n\geq4$ is abelian.
So, while the dichotomy in the abelian case is between $n=2$ and
$n\geq3$, in the metabelian case it is between $n=2,3$ and $n\geq4$.
\end{abstract}
\textbf{Mathematics Subject Classification (2010):} Primary: 19B37,
20H05, Secondary: 20E36, 20E18.\textbf{}\\
\textbf{}\\
\textbf{Key words and phrases:} congruence subgroup problem, automorphism
groups, profinite groups, free metabelian groups.

\tableofcontents{}

\section{Introduction}

The classical congruence subgroup problem (CSP) asks for, say, $G=SL_{n}\left(\mathbb{Z}\right)$
or $G=GL_{n}\left(\mathbb{Z}\right)$, whether every finite index
subgroup of $G$ contains a principal congruence subgroup, i.e. a
subgroup of the form $G\left(m\right)=\ker\left(G\to GL_{n}\left(\mathbb{Z}/m\mathbb{Z}\right)\right)$
for some $0\neq m\in\mathbb{Z}$. Equivalently, it asks whether the
natural map $\hat{G}\to GL_{n}(\hat{\mathbb{Z}})$ is injective, where
$\hat{G}$ and $\hat{\mathbb{Z}}$ are the profinite completions of
the group $G$ and the ring $\mathbb{Z}$, respectively. More generally,
the CSP asks what is the kernel of this map. It is a classical $19^{\underline{th}}$
century result that the answer is negative for $n=2$. Moreover (but
not so classical, cf. \cite{key-17}, \cite{key-4}), the kernel in
this case is $\hat{F}_{\omega}$ - the free profinite group on a countable
number of generators. On the other hand, it was proved in the sixties
by Mennicke \cite{key-22} and Bass-Lazard-Serre \cite{key-23} that
for $n\geq3$ the answer is affirmative, and the kernel is therefore
trivial.

By the observation $GL_{n}\left(\mathbb{Z}\right)\cong Aut\left(\mathbb{Z}^{n}\right)=Out\left(\mathbb{Z}^{n}\right)$,
the CSP can be generalized as follows: Let $\Gamma$ be a group and
$G\leq Aut\left(\Gamma\right)$ (resp. $G\leq Out\left(\Gamma\right)$).
For a finite index characteristic subgroup $M\leq\Gamma$ denote 
\begin{eqnarray*}
G\left(M\right) & = & \ker\left(G\to Aut\left(\Gamma/M\right)\right)\\
(\textrm{resp.}\,\,\,G\left(M\right) & = & \ker\left(G\to Out\left(\Gamma/M\right)\right)).
\end{eqnarray*}
Such a $G\left(M\right)$ will be called a ``principal congruence
subgroup'' and a finite index subgroup of $G$ which contains $G\left(M\right)$
for some $M$ will be called a ``congruence subgroup''. The CSP
for the pair $\left(G,\Gamma\right)$ asks whether every finite index
subgroup of $G$ is a congruence subgroup. In some sense, the CSP
tries to understand whether every finite quotient of $G$ comes from
a finite quotient of $\Gamma$. 

One can easily see that the CSP is equivalent to the question: Is
the congruence map $\hat{G}=\underleftarrow{\lim}G/U\to\underleftarrow{\lim}G/G\left(M\right)$
injective? Here, $U$ ranges over all finite index normal subgroups
of $G$, and $M$ ranges over all finite index characteristic subgroups
of $\Gamma$. When $\Gamma$ is finitely generated, it has only finitely
many subgroups of given index $m$, and thus, the charateristic subgroups
$M_{m}=\cap\left\{ \Delta\leq\Gamma\,|\,\left[\Gamma:\Delta\right]=m\right\} $
are of finite index in $\Gamma$. Hence, one can write $\hat{\Gamma}=\underleftarrow{\lim}_{m\in\mathbb{N}}\Gamma/M_{m}$
and have\footnote{By the celebrated theorem of Nikolov and Segal which asserts that
every finite index subgroup of a finitely generated profinite group
is open \cite{key-17-1}, the second inequality is actually an equality.
However, we do not need it. } 
\begin{eqnarray*}
\underleftarrow{\lim}G/G\left(M\right) & = & \underleftarrow{\lim}_{m\in\mathbb{N}}G/G\left(M_{m}\right)\leq\underleftarrow{\lim}_{m\in\mathbb{N}}Aut(\Gamma/M_{m})\\
 & \leq & Aut(\underleftarrow{\lim}_{m\in\mathbb{N}}(\Gamma/M_{m}))=Aut(\hat{\Gamma})\,\,\,\,(\textrm{resp.}\,\,Out(\hat{\Gamma})).
\end{eqnarray*}
Therefore, when $\Gamma$ is finitely generated, the CSP is equivalent
to the question: Is the congruence map $\hat{G}\to Aut(\hat{\Gamma})$
(resp. $\hat{G}\to Out(\hat{\Gamma})$) injective? More generally,
the CSP asks what is the kernel $C\left(G,\Gamma\right)$ of this
map. For $G=Aut\left(\Gamma\right)$ we will also use the simpler
notation $C\left(\Gamma\right)=C\left(G,\Gamma\right)$. 

The classical congruence subgroup results mentioned above can therefore
be reformulated as $C\left(\mathbb{Z}^{2}\right)=\hat{F}_{\omega}$
while $C\left(\mathbb{Z}^{n}\right)=\left\{ e\right\} $ for $n\geq3$.
So the finite quotients of $GL_{n}\left(\mathbb{Z}\right)$ are closely
related to the finite quotients of $\mathbb{Z}^{n}$ when $n\geq3$,
but the finite quotients of $GL_{2}\left(\mathbb{Z}\right)$ are far
from being understandable by the finite quotients of $\mathbb{Z}^{2}$.

Very few results are known when $\Gamma$ is non-abelian. Most of
the results are related to $\Gamma=\pi_{g,n}$, the fundamental group
of $S_{g,n}$, the closed surface of genus $g$ with $n$ punctures.
In these cases one can take $G=PMod\left(S_{g,n}\right)$, the pure
mapping class group of $S_{g,n}$, and can naturally view it as a
subgroup of $Out\left(\pi_{g,n}\right)$ (cf. \cite{key-20}, chapter
8). Considering these cases, it is known that:
\begin{thm}
\label{thm:MCG}For $g=0,1,2$ and every $n\geq0,1,0$ respectively,
we have $C\left(PMod\left(S_{g,n}\right),\pi_{g,n}\right)=\left\{ 1\right\} $.
\end{thm}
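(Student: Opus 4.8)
The plan is to treat the three genus ranges in parallel, inducting on the number of punctures $n$ with the Birman exact sequence as the engine, and to anchor the induction in low-complexity base cases where the mapping class group is virtually free (genus $0$) or is described by a branched covering (genus $1$ and $2$). For $g\geq2$ and all $n\geq0$, for $g=1$ and $n\geq1$, and for $g=0$ and $n\geq3$, one has the Birman exact sequence
\[
1\longrightarrow\pi_{g,n}\longrightarrow PMod\left(S_{g,n+1}\right)\longrightarrow PMod\left(S_{g,n}\right)\longrightarrow1,
\]
with $\pi_{g,n}$ appearing as the point-pushing subgroup. The key preliminary is a general extension lemma: if $1\to A\to B\to C\to1$ is a short exact sequence of finitely generated groups that stays exact after profinite completion --- available here since surface groups and mapping class groups are good in Serre's sense, so that $\widehat{A}\hookrightarrow\widehat{B}\twoheadrightarrow\widehat{C}$ --- and if the congruence topology of $B$ induces on $A$ a topology at least as fine as its full profinite one and projects onto $C$ a topology at least as fine as its congruence topology, and if $C\left(C,\cdot\right)=\left\{ 1\right\} $, then $C\left(B,\cdot\right)=\left\{ 1\right\} $. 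Taking $A=\pi_{g,n}$, the inductive step reduces to the single claim that every finite quotient of the surface group $\pi_{g,n}$ is already visible, through point-pushing, in the congruence topology of $PMod\left(S_{g,n+1}\right)$.

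\textbf{Genus $0$ and genus $1$.} For $g=0$ the groups $PMod\left(S_{0,n}\right)$ are finite for $n\leq3$, so the statement holds there for free, and one climbs up by the lemma; this range is the most tractable because every surface group $\pi_{0,m}$ is free, which keeps goodness and the visibility of finite quotients under control. For $g=1$ the base case is $PMod\left(S_{1,1}\right)\cong SL_{2}\left(\mathbb{Z}\right)$ acting on $\pi_{1,1}\cong F_{2}$ through $Out\left(F_{2}\right)\cong GL_{2}\left(\mathbb{Z}\right)$: although $C\left(\mathbb{Z}^{2}\right)\neq\left\{ 1\right\} $ --- which is exactly why the closed torus $S_{1,0}$ is barred from the theorem --- replacing $\mathbb{Z}^{2}$ by the \emph{nonabelian} free group $F_{2}$ refines the congruence topology enough that the $F_{2}$-congruence subgroups of $SL_{2}\left(\mathbb{Z}\right)$ become cofinal among all of its finite-index subgroups. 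This can be extracted from the virtual freeness of $SL_{2}\left(\mathbb{Z}\right)$ together with the elliptic involution, which realizes $S_{1,1}$ as a branched double cover of the genus-$0$ surface $S_{0,4}$ already in hand; from $S_{1,1}$ (and, if needed, $S_{1,2}$) one then climbs up in $n$.

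\textbf{Genus $2$.} The new ingredient is the Birman--Hilden phenomenon: the hyperelliptic involution $\iota$ is central in $Mod\left(S_{2,0}\right)$ and, exceptionally in genus $2$, \emph{every} mapping class commutes with it, so there is a central extension
\[
1\longrightarrow\mathbb{Z}/2\mathbb{Z}\longrightarrow Mod\left(S_{2,0}\right)\longrightarrow Mod\left(S_{0,6}\right)\longrightarrow1,
\]
where $S_{0,6}=S_{2,0}/\langle\iota\rangle$ is the sphere with six branch points. The genus-$0$ case provides $C\left(Mod\left(S_{0,6}\right),\pi_{0,6}\right)=\left\{ 1\right\} $, the central $\mathbb{Z}/2\mathbb{Z}$ is invisible to profinite completions, and since $\pi_{2,0}$ sits as an index-$2$ subgroup inside the quotient of $\pi_{0,6}\cong F_{5}$ obtained by killing the squares of the peripheral loops, the finite characteristic quotients of $\pi_{2,0}$ are controlled by those of $F_{5}$; hence the congruence topology of $Mod\left(S_{2,0}\right)$ is pulled back from that of $Mod\left(S_{0,6}\right)$. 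The extension lemma then gives $C\left(Mod\left(S_{2,0}\right),\pi_{2,0}\right)=\left\{ 1\right\} $, and the Birman induction in $n$ disposes of $S_{2,n}$ for all $n$.

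\textbf{Main obstacle.} The delicate point, present in every case, is the normal-subgroup clause of the extension lemma: one must show that point-pushing along loops of $S_{g,n}$ manufactures, inside every congruence subgroup of $PMod\left(S_{g,n+1}\right)$, enough relations to realize \emph{all} finite quotients of $\pi_{g,n}$ --- equivalently, that no finite quotient of the surface group escapes the congruence topology. This is precisely where the smallness of the genus is used essentially: in genus $0$ it rests on the freeness of the surface groups, and in genus $1$ and $2$ on the hyperelliptic/branched-covering descriptions that pull the problem back to the free (genus-$0$) situation. Establishing this clause, together with the goodness and profinite exactness of the Birman sequences, is the technical heart of the argument.
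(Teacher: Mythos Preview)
The paper does not prove Theorem~\ref{thm:MCG} at all: it is quoted as background and attributed to the literature (Diaz--Donagi--Harbater and McReynolds for $g=0$; Asada, with later treatments by Boggi and Bux--Ershov--Rapinchuk, for $g=1$; Boggi for $g=2$). So there is no ``paper's own proof'' to compare against; the relevant comparison is with those references.

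Your outline is, in broad strokes, an accurate summary of how those papers proceed: Birman exact sequences to induct on $n$, virtual freeness and pure-braid descriptions in genus~$0$, Asada's observation that the $F_2$-congruence topology on $SL_2(\mathbb{Z})$ is the full profinite topology for the $g=1$ base case, and the Birman--Hilden quotient $S_{2,0}\to S_{0,6}$ for genus~$2$. As a roadmap this is fine. But as written it is a sketch, not a proof, and you flag the gap yourself: the ``normal-subgroup clause'' --- that the congruence topology on $PMod(S_{g,n+1})$ restricts to the full profinite topology on the point-pushing subgroup $\pi_{g,n}$ --- is the entire content of the theorem, and you do not actually establish it in any of the three cases. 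In particular, for $g=1$ the reduction of the base case $S_{1,1}$ to $S_{0,4}$ via the elliptic involution is more delicate than you indicate (this is what Asada's paper, and the group-theoretic reworking by Bux--Ershov--Rapinchuk, are about), and for $g=2$ the passage from $C(Mod(S_{0,6}),\pi_{0,6})=\{1\}$ to $C(Mod(S_{2,0}),\pi_{2,0})=\{1\}$ requires a careful comparison of the two congruence topologies under the Birman--Hilden correspondence, which is the substance of Boggi's argument. If your goal is a self-contained proof rather than a literature survey, those are the places where real work remains.
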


Note that when $g=1$ and $n=0$, $\pi_{1,0}\cong\mathbb{Z}^{2}$
and $PMod\left(S_{1,0}\right)\cong SL_{2}\left(\mathbb{Z}\right)$,
so $C\left(PMod\left(S_{1,0}\right),\pi_{1,0}\right)=C\left(SL_{2}\left(\mathbb{Z}\right),\mathbb{Z}^{2}\right)=\hat{F}_{\omega}$.
The cases for $g=0$ were proved in \cite{key-16-1} (see also \cite{key-18}),
the cases for $g=1$ were proved in \cite{key-3} (see also \cite{key-19},
\cite{key-5}), and the cases for $g=2$ were proved in \cite{key-19}
(see also \cite{key-6-1} for the specific case where $g=2$ and $n=0$).
In particular, as $PMod\left(S_{1,1}\right)$ is isomorphic to the
special outer-automorphism group of $F_{2}$, we have an affirmative
answer for the full outer-automorphism group of $F_{2}$, and by some
standard arguments it shows that actually $C\left(F_{2}\right)$ is
trivial (see \cite{key-5}, \cite{key-7}). Note that for every $n>0$,
$\pi_{g,n}\cong F_{2g+n-1}$ = the free group on $2g+n-1$ generators.
Hence, the above solved cases give an affirmative answer for various
subgroups of the outer-automorphism group of finitely generated free
groups, while the CSP for the full $Aut\left(F_{d}\right)$ when $d\geq3$
is still unsettled, and so is the situation with $PMod\left(S_{g,n}\right)$
when $g\geq3$. 

All the above settled cases have a common property which plays a crucial
role in the proof of Theorem \ref{thm:MCG}: There is an intrinsic
description of $G$ by iterative extension process by virtually free
groups (groups which have a finite index free subgroup). Actually,
in these cases, in some sense, we do understand the finite quotients
of $G$, and the CSP tells us that these quotients are closely related
to the finite quotients of $\Gamma$. This situation changes when
we pass to $G=Aut\left(F_{d}\right)$ for $d\geq3$ or $PMod\left(S_{g,n}\right)$
for $g\geq3$. In these cases we do not have a description of $G$
that can help to understand the finite quotients of $G$. So in some
sense, all the known cases do not give us a new understanding of the
finite quotients of $G$. Considering the abelian case, what makes
the result of Mennicke and Bass-Lazard-Serre so special is that it
not only shows that the finite quotients of $GL_{n}\left(\mathbb{Z}\right)$
are related to the finite quotients of $\mathbb{Z}^{n}$, but also
gives us a description of the finite quotients of $GL_{n}\left(\mathbb{Z}\right)$,
which we have not known without this result.

Denote now the free metabelian group on $n$ generators by $\Phi_{n}=F_{n}/F_{n}''$.
Considering the metabelian case, it was shown in \cite{key-7} (see
also \cite{key-6}) that $C\left(\Phi_{2}\right)=\hat{F}_{\omega}$.
In addition, it was proven there that $C\left(\Phi_{3}\right)\supseteq\hat{F}_{\omega}$.
So, the finite quotients of $Aut\left(\Phi_{2}\right)$ and $Aut\left(\Phi_{3}\right)$
are far from being connected to the finite quotients of $\Phi_{2}$
and $\Phi_{3}$, respectively. 

Here comes the main theorem of this paper: 
\begin{thm}
\label{thm:main}For every $n\geq4$, $C\left(IA\left(\Phi_{n}\right),\Phi_{n}\right)$
is central in $\widehat{IA\left(\Phi_{n}\right)}$, where 
\[
IA\left(\Phi_{n}\right)=\ker\left(Aut\left(\Phi_{n}\right)\to Aut\left(\Phi_{n}/\Phi'_{n}\right)=GL_{n}\left(\mathbb{Z}\right)\right).
\]
\end{thm}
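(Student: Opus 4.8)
## Proof Strategy

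The plan is to analyze the structure of $C\left(IA\left(\Phi_{n}\right),\Phi_{n}\right)$ by exploiting the well-known arithmetic description of $IA(\Phi_n)$. Recall from the work of Bachmuth and others that $IA(\Phi_n)$ embeds into $GL_n(\mathbb{Z}[\Phi_n/\Phi_n'])=GL_n(\mathbb{Z}[x_1^{\pm 1},\dots,x_n^{\pm 1})$ via the Magnus embedding / Fox derivatives: an automorphism $\varphi$ fixing $\Phi_n$ modulo $\Phi_n'$ is recorded by the matrix of partial Fox derivatives of $\varphi(x_i)$, which lands in the subgroup of $GL_n(\mathbb{Z}[x^{\pm}])$ fixing the augmentation. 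So the first step is to fix this embedding and reinterpret the congruence kernel inside the congruence kernel of the relevant arithmetic-type group over the Laurent polynomial ring.

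Next I would reduce the problem to a commutator computation: an element of $C(IA(\Phi_n),\Phi_n)\leq\widehat{IA(\Phi_n)}$ is, roughly speaking, "invisible" to every finite quotient of $\Phi_n$, so it acts trivially on the profinite completion $\hat{\Phi}_n$. The goal is to show that any two such elements commute — equivalently, that the closure of the commutator subgroup of $C$ is trivial. The key mechanism should be the standard "generation by elementary matrices / bounded generation" philosophy that underlies the classical Mennicke–Bass–Lazard–Serre result: for $n\geq 4$ the group $IA(\Phi_n)$ (or a suitable finite-index or normal part of it) contains enough "unipotent-like" subgroups — say images of the $IA$-generators $x_i\mapsto x_i^{[x_j,x_k]}$ and $x_i\mapsto x_i x_i^{[\text{stuff}]}$ — whose pairwise commutator relations, when pushed into the profinite completion, force centrality. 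The hypothesis $n\geq 4$ is exactly what gives room to run Mennicke-symbol / stability arguments (one needs several "independent directions," which fail for $n=2,3$, matching the stated dichotomy).

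More concretely, the core step I expect is this: exhibit a normal subgroup $N\trianglelefteq IA(\Phi_n)$, finite index or at least with controllable quotient, which is generated by conjugates of a single elementary-type generator, and show that the commutator map $N\times N\to N$ factors through a congruence quotient — i.e. $[a,b]$ depends only on the images of $a,b$ modulo principal congruence subgroups $IA(\Phi_n)(M_m)$. Taking profinite completions, this shows that in $\widehat{IA(\Phi_n)}$ the commutator of two elements of the congruence kernel $C$ (which by definition die in $\varprojlim IA(\Phi_n)/IA(\Phi_n)(M_m)$) is trivial, so $C$ is abelian. To upgrade "abelian" to "central in $\widehat{IA(\Phi_n)}$" one argues that the conjugation action of $\widehat{IA(\Phi_n)}$ on $C$ also factors through $\varprojlim IA(\Phi_n)/IA(\Phi_n)(M_m)$ — because $C$ is the kernel of a map to that limit and conjugation descends — hence $\widehat{IA(\Phi_n)}$ acts trivially on $C$.

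The main obstacle, and where the real work lies, is establishing the commutator-factorization / bounded-generation statement in the metabelian setting. Unlike $SL_n(\mathbb{Z})$, here the coefficient ring $\mathbb{Z}[x_1^{\pm},\dots,x_n^{\pm}]$ is an infinite-dimensional (non-Noetherian in the relevant sense, or at least not a PID) ring, so classical Mennicke symbols do not directly apply; one must instead use the specific presentation of $IA(\Phi_n)$ by Bachmuth–Mochizuki (for $n\geq 4$, $IA(\Phi_n)$ is known to be generated by the "obvious" $IA$-automorphisms, in contrast to $n=3$), track how the defining relations among these generators interact with congruence subgroups, and verify that the "error terms" produced when expressing an arbitrary product of elementary generators in a normal-form way lie in an arbitrarily deep principal congruence subgroup. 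I would structure this as: (i) recall the Bachmuth–Mochizuki generation result and fix generators $\sigma_{ij}$, (ii) prove a commutator-collecting lemma modulo $IA(\Phi_n)(M_m)$ using that $n\geq 4$ gives a "free" index to absorb commutators, (iii) pass to profinite completions and conclude. Steps (i) and (iii) are essentially bookkeeping; step (ii) is the heart of the argument and is where the hypothesis $n\geq 4$ is genuinely used.
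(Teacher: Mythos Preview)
Your proposal has the right setup (the Magnus/Fox--derivative embedding of $IA(\Phi_n)$ into $GL_n(\mathbb{Z}[x_1^{\pm1},\dots,x_n^{\pm1}])$, the relevance of Bachmuth--Mochizuki, the need for $n\geq4$ to have a spare index), but the argument you sketch for \emph{centrality} is circular, and the actual mechanism the paper uses is one you explicitly set aside.

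Concretely: you claim that ``the conjugation action of $\widehat{IA(\Phi_n)}$ on $C$ factors through $\varprojlim IA(\Phi_n)/IA(\Phi_n)(M_m)$ because $C$ is the kernel of a map to that limit and conjugation descends.'' But the fact that $C$ is a normal kernel only tells you that conjugation \emph{preserves} $C$; it does not tell you that the action on $C$ factors through the quotient. The statement ``the action on $C$ factors through $\widehat{IA(\Phi_n)}/C$'' is equivalent to ``$C$ acts trivially on itself,'' i.e.\ $C$ is abelian --- it says nothing about how the rest of $\widehat{IA(\Phi_n)}$ acts. So your upgrade from ``abelian'' to ``central'' has no content. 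Similarly, your commutator-factorization step is stated too strongly: if $[a,b]$ literally depended only on the images of $a,b$ modulo some fixed $IA(\Phi_n)(M_m)$, that congruence subgroup would itself be abelian, which it is not.

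The paper's route is different and more concrete. It does \emph{not} try to control commutators directly. Instead it introduces the subgroups $IA_n^m=\langle IA(\Phi_n)^m\rangle$ and proves two things: (i) each $IA_n^m$ has finite index (so $\widehat{IA(\Phi_n)}=\varprojlim IA(\Phi_n)/IA_n^m$), and (ii) the image of each principal congruence subgroup $IG_{n,m^4}$ in $IA(\Phi_n)/IA_n^m$ is \emph{central}. Both facts are obtained by showing an inclusion of the form
\[
IG_{n,m^2}\subseteq IA_n^m\cdot\prod_{i=1}^n ISL_{n-1,i}(\sigma_iH_{n,m}),
\]
where $ISL_{n-1,i}$ is a copy of $SL_{n-1}$ over the Laurent ring sitting inside $IA(\Phi_n)$, and then invoking a genuine $K$-theory input: for $n\geq4$ the relative $SK_1(R_n,H_{n,m};n-1)=SL_{n-1}(R_n,H_{n,m})/E_{n-1}(R_n,H_{n,m})$ is finite and \emph{central} in $GL_{n-1}(R_n)/E_{n-1}(R_n,H_{n,m})$ (using Suslin's $E_d(R_n)=SL_d(R_n)$). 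You wrote that ``classical Mennicke symbols do not directly apply'' over the Laurent ring and proposed to replace them by Bachmuth--Mochizuki combinatorics; in fact this $SK_1$ centrality is exactly the ingredient that converts the inclusion above into the centrality of $IG_{n,m^4}\cdot IA_n^m/IA_n^m$, and without it (or an equivalent $K$-theoretic statement) there is no visible way to get centrality rather than mere normality.
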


Using the commutative exact diagram (see $\varoint$\ref{sec:Inferences})
\[
\begin{array}{ccccccc}
\widehat{IA\left(\Phi_{n}\right)} & \to & \widehat{Aut\left(\Phi_{n}\right)} & \to & \widehat{GL_{n}\left(\mathbb{Z}\right)} & \to & 1\\
 & \searrow & \downarrow &  & \downarrow\\
 &  & Aut(\hat{\Phi}_{n}) & \to & GL_{n}(\hat{\mathbb{Z}})
\end{array}
\]
and the fact that $\widehat{GL_{n}\left(\mathbb{Z}\right)}\to GL_{n}(\hat{\mathbb{Z}})$
is injective for $n\geq3$, we obtain that $C\left(IA\left(\Phi_{n}\right),\Phi_{n}\right)$
is mapped onto $C\left(\Phi_{n}\right)$. Therefore we deduce that: 
\begin{thm}
\label{thm:full}For every $n\geq4$, $C\left(\Phi_{n}\right)$ is
abelian.
\end{thm}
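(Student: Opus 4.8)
The plan is to deduce Theorem~\ref{thm:full} from Theorem~\ref{thm:main} by a short diagram chase, feeding in the classical fact that $C\left(\mathbb{Z}^{n}\right)=\left\{ 1\right\} $ for $n\geq3$. Start from the defining short exact sequence
\[
1\to IA\left(\Phi_{n}\right)\to Aut\left(\Phi_{n}\right)\to GL_{n}\left(\mathbb{Z}\right)\to1,
\]
and let $\iota\colon\widehat{IA\left(\Phi_{n}\right)}\to\widehat{Aut\left(\Phi_{n}\right)}$, $\pi\colon\widehat{Aut\left(\Phi_{n}\right)}\to\widehat{GL_{n}\left(\mathbb{Z}\right)}$, $\phi\colon\widehat{Aut\left(\Phi_{n}\right)}\to Aut(\hat{\Phi}_{n})$, $\psi\colon\widehat{GL_{n}\left(\mathbb{Z}\right)}\to GL_{n}(\hat{\mathbb{Z}})$ and $\rho\colon Aut(\hat{\Phi}_{n})\to GL_{n}(\hat{\mathbb{Z}})$ be the maps occurring in the commutative diagram displayed above, so that $\rho\circ\phi=\psi\circ\pi$ and the congruence map of $IA\left(\Phi_{n}\right)$ equals $\phi\circ\iota$. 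By definition $C\left(\Phi_{n}\right)=\ker\phi$ and $C\left(IA\left(\Phi_{n}\right),\Phi_{n}\right)=\ker\left(\phi\circ\iota\right)$.

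First I would isolate the two general facts that make the diagram usable. (i) Profinite completion is right exact, so $\widehat{IA\left(\Phi_{n}\right)}\to\widehat{Aut\left(\Phi_{n}\right)}\to\widehat{GL_{n}\left(\mathbb{Z}\right)}\to1$ is exact; combined with (ii) the fact that a continuous homomorphism of profinite groups has closed image (a compact subset of a Hausdorff group), this yields $\ker\pi=\mathrm{im}\,\iota$ exactly --- with the caveat that $\iota$ itself need not be injective. The construction of the diagram above is then the routine verification that $Aut\left(\Phi_{n}\right)$ acts continuously on $\hat{\Phi}_{n}$, that the induced action on $\hat{\Phi}_{n}/\hat{\Phi}_{n}'=\hat{\mathbb{Z}}^{n}$ is the obvious one, and that everything is compatible with the abstract congruence maps; the only nontrivial input is that $\psi$ is injective for $n\geq3$, which is precisely $C\left(\mathbb{Z}^{n}\right)=\left\{ 1\right\} $ (Mennicke; Bass-Lazard-Serre). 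Since $n\geq4$, $\psi$ is injective.

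Now the chase. Take $x\in C\left(\Phi_{n}\right)=\ker\phi$. Then $\psi\left(\pi\left(x\right)\right)=\rho\left(\phi\left(x\right)\right)=1$, so injectivity of $\psi$ forces $\pi\left(x\right)=1$, i.e. $x\in\ker\pi=\mathrm{im}\,\iota$ by (i). Choose $\tilde{x}\in\widehat{IA\left(\Phi_{n}\right)}$ with $\iota\left(\tilde{x}\right)=x$; then $\phi\left(\iota\left(\tilde{x}\right)\right)=\phi\left(x\right)=1$, so $\tilde{x}\in\ker\left(\phi\circ\iota\right)=C\left(IA\left(\Phi_{n}\right),\Phi_{n}\right)$. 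Hence $\iota$ maps $C\left(IA\left(\Phi_{n}\right),\Phi_{n}\right)$ \emph{onto} $C\left(\Phi_{n}\right)$ --- the reverse inclusion $\iota\left(C\left(IA\left(\Phi_{n}\right),\Phi_{n}\right)\right)\subseteq C\left(\Phi_{n}\right)$ being immediate from $C\left(IA\left(\Phi_{n}\right),\Phi_{n}\right)=\ker\left(\phi\circ\iota\right)$. By Theorem~\ref{thm:main}, $C\left(IA\left(\Phi_{n}\right),\Phi_{n}\right)$ is central, hence abelian, in $\widehat{IA\left(\Phi_{n}\right)}$; therefore its homomorphic image $C\left(\Phi_{n}\right)$ is abelian, which is Theorem~\ref{thm:full}.

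As for the main obstacle: at this stage there essentially is none --- the argument is purely formal once Theorem~\ref{thm:main} and the classical $n\geq3$ congruence result are in hand. All the real difficulty lives in Theorem~\ref{thm:main}, and the only point demanding a little care here is that profinite completion yields merely right-exactness and that $\iota$ may have a kernel; but since we only need to transport an abelian subgroup \emph{forward} along $\iota$ (rather than pull structure back through it), neither issue obstructs the conclusion.
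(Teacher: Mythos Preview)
Your proposal is correct and follows essentially the same approach as the paper: both deduce Theorem~\ref{thm:full} from Theorem~\ref{thm:main} via the same commutative diagram, using right-exactness of profinite completion on the sequence $1\to IA(\Phi_n)\to Aut(\Phi_n)\to GL_n(\mathbb{Z})\to 1$ together with injectivity of $\widehat{GL_n(\mathbb{Z})}\to GL_n(\hat{\mathbb{Z}})$ for $n\geq 3$, and then the identical diagram chase showing $C(IA(\Phi_n),\Phi_n)$ surjects onto $C(\Phi_n)$. The only cosmetic difference is that the paper packages the right-exactness (and the observation that $\ker\iota$ is central) as a separate Proposition~\ref{prop:exact-1}, whereas you argue it inline.
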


This is dramatically different from the cases of $n=2,3$ described
above. Theorem \ref{thm:full} tells us that when $n\geq4$ the situation
changes, and the finite quotients of $Aut\left(\Phi_{n}\right)$ are
closely related to the finite quotients of $\Phi_{n}$ in the following
manner:
\begin{cor}
\label{cor:description}Let $n\geq4$. Then, for every finite index
subgroup $H\leq G=Aut\left(\Phi_{n}\right)$, there exists a finite
index characteristic subgroup $M\leq\Phi_{n}$ and $r\in\mathbb{N}$
such that $G\left(M\right)'G\left(M\right)^{r}\subseteq H$.
\end{cor}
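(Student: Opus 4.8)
The plan is to obtain Corollary~\ref{cor:description} as a purely formal consequence of Theorem~\ref{thm:full} (for $n\geq4$), in the same spirit in which the Bass--Lazard--Serre solution of the CSP yields a description of the finite quotients of $GL_{n}\left(\mathbb{Z}\right)$. Recall from the introduction that $C\left(\Phi_{n}\right)=C$ is the kernel of the congruence map $\hat{G}\to\overline{G}$, where $G=Aut\left(\Phi_{n}\right)$ and $\overline{G}=\underleftarrow{\lim}_{M}G/G\left(M\right)$, the limit running over the finite index characteristic subgroups $M\leq\Phi_{n}$. Since each such $G\left(M\right)$ is a finite index normal subgroup of $G$ (the group $\Phi_{n}/M$, hence $Aut\left(\Phi_{n}/M\right)$, being finite), the kernel of $\hat{G}\to G/G\left(M\right)$ is exactly the closure $\overline{G\left(M\right)}$ of $G\left(M\right)$ in $\hat{G}$; consequently $C=\bigcap_{M}\overline{G\left(M\right)}$.

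First I would reduce to normal subgroups. Given a finite index subgroup $H\leq G$, replace it by its normal core $H_{0}=\bigcap_{g\in G}gHg^{-1}$, a finite index normal subgroup with $H_{0}\subseteq H$, and put $Q=G/H_{0}$, $r=\exp\left(Q\right)$, and write $q:G\to Q$ (as well as $q:\hat{G}\to Q$) for the quotient map. It suffices to find a finite index characteristic subgroup $M\leq\Phi_{n}$ with $G\left(M\right)'G\left(M\right)^{r}\subseteq H_{0}$. Here the containment $G\left(M\right)^{r}\subseteq H_{0}$ holds automatically for every $M$, because the image of $G\left(M\right)$ in $Q$ has exponent dividing $r$; so the only real point is to produce an $M$ for which $q\left(G\left(M\right)\right)$ is abelian, equivalently $G\left(M\right)'\subseteq\ker q=H_{0}$.

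To find such an $M$, observe that if $M_{1}\leq M_{2}$ are finite index characteristic subgroups of $\Phi_{n}$ then $G\left(M_{1}\right)\leq G\left(M_{2}\right)$ (an automorphism acting trivially on $\Phi_{n}/M_{1}$ acts trivially on $\Phi_{n}/M_{2}$), and $M_{1}\cap M_{2}$ is again finite index characteristic; thus the subgroups $q\left(G\left(M\right)\right)$ of the \emph{finite} group $Q$ form a downward directed family and therefore attain a minimum $q\left(G\left(M_{\ast}\right)\right)=\bigcap_{M}q\left(G\left(M\right)\right)$. Now apply the standard compactness fact that a continuous image of the intersection of a downward directed family of nonempty closed subsets of a compact space equals the intersection of the images: taking $q$ on the profinite group $\hat{G}$ and the closed subgroups $\overline{G\left(M\right)}$ (which are downward directed since $\overline{G\left(M_{1}\cap M_{2}\right)}\subseteq\overline{G\left(M_{1}\right)}\cap\overline{G\left(M_{2}\right)}$), and using $q\left(\overline{G\left(M\right)}\right)=q\left(G\left(M\right)\right)$ together with $\bigcap_{M}\overline{G\left(M\right)}=C$, we get $q\left(G\left(M_{\ast}\right)\right)=q\left(C\right)$. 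By Theorem~\ref{thm:full} the group $C$ is abelian, hence so is its homomorphic image $q\left(C\right)=q\left(G\left(M_{\ast}\right)\right)$, and therefore $G\left(M_{\ast}\right)'\subseteq\ker q=H_{0}$. Combined with $G\left(M_{\ast}\right)^{r}\subseteq H_{0}$, this gives $G\left(M_{\ast}\right)'G\left(M_{\ast}\right)^{r}\subseteq H_{0}\subseteq H$, which is the assertion of Corollary~\ref{cor:description} with $M=M_{\ast}$ and this $r$.

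I do not expect a genuine obstacle here, since Corollary~\ref{cor:description} is a soft corollary of Theorem~\ref{thm:full}; the only places demanding a little care are formal ones, namely passing to the normal core $H_{0}$ (both to have a group $Q$ as a common target for all the maps $q$ and to extract the exponent $r$ for the power part at no cost), and the inverse-limit/compactness step that upgrades the trivial inclusion $q\left(C\right)\subseteq\bigcap_{M}q\left(G\left(M\right)\right)$ to an equality, so that abelianness of $C$ actually passes to the eventual image $q\left(G\left(M_{\ast}\right)\right)$. All the substantive work sits in Theorem~\ref{thm:main}.
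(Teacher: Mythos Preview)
Your proof is correct. The paper does not spell out a proof of Corollary~\ref{cor:description}, treating it as an immediate soft consequence of Theorem~\ref{thm:full}; your argument via the normal core, the exponent $r=\exp(G/H_{0})$, and the compactness step identifying $\bigcap_{M}q(G(M))$ with $q(C)$ is exactly the natural way to make this precise.
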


Note that by a theorem of Bachmuth and Mochizuki \cite{key-24}, $Aut\left(F_{n}\right)\to Aut\left(\Phi_{n}\right)$
is surjective for every $n\geq4$, and thus $G=Aut\left(\Phi_{n}\right)$
is finitely generated. Hence, the principal congruence subgroups of
the form $G\left(M\right)$ are finitely generated, and thus, the
subgroups of the form $G\left(M\right)'G\left(M\right)^{r}$ are also
of finite index in $Aut\left(\Phi_{n}\right)$. Therefore, the quotients
of the form $Aut\left(\Phi_{n}\right)/G\left(M\right)'G\left(M\right)^{r}$
describe all the finite quotients of $Aut\left(\Phi_{n}\right)$.
In particular, our theorem gives us a description of the finite quotients
of $Aut\left(\Phi_{n}\right)$ when $n\geq4$ - just like the theorem
of \cite{key-22} and \cite{key-23} gives for $GL_{n}\left(\mathbb{Z}\right)$
when $n\geq3$. Corollary \ref{cor:description} obviously does not
hold for $n=2,3$. So, the picture is that while the dichotomy in
the abelian case is between $n=2$ and $n\geq3$, in the metabelian
case we have a dichotomy between $n=2,3$ and $n\geq4$. 

In \cite{key-24-1}, Kassabov and Nikolov showed that $\ker(\widehat{SL_{n}\left(\mathbb{Z}\left[x\right]\right)}\to SL_{n}(\widehat{\mathbb{Z}\left[x\right]}))$
is central and not finitely generated, when $n\geq3$. In \cite{key-14}
we use their techniques and an interesting surjective representation
\[
IA\left(\Phi_{n}\right)\twoheadrightarrow\ker(GL_{n-1}\left(\mathbb{Z}[x^{\pm1}]\right)\overset{x\mapsto1}{\longrightarrow}GL_{n-1}\left(\mathbb{Z}\right))
\]
to show also that:
\begin{thm}
\label{cor:not finitely}For every $n\geq4$, $C\left(IA\left(\Phi_{n}\right),\Phi_{n}\right)$
is not finitely generated.
\end{thm}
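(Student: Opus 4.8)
The plan is to deduce the statement from the theorem of Kassabov--Nikolov \cite{key-24-1} by means of the surjective representation
\[
\pi\colon IA\left(\Phi_{n}\right)\twoheadrightarrow G_{n-1}:=\ker\!\left(GL_{n-1}\left(\mathbb{Z}[x^{\pm1}]\right)\overset{x\to1}{\longrightarrow}GL_{n-1}\left(\mathbb{Z}\right)\right)
\]
of \cite{key-14}, which originates in the Magnus embedding of $\Phi_{n}$. Since $\pi$ is onto it induces a continuous surjection $\hat{\pi}\colon\widehat{IA\left(\Phi_{n}\right)}\twoheadrightarrow\widehat{G_{n-1}}$ of profinite completions, and the whole point will be to see that $\hat{\pi}$ carries the congruence kernel $C\left(IA\left(\Phi_{n}\right),\Phi_{n}\right)$ onto the congruence kernel $C\left(G_{n-1}\right):=\ker\!\big(\widehat{G_{n-1}}\to GL_{n-1}(\widehat{R})\big)$, where $R=\mathbb{Z}[x^{\pm1}]$ and $G_{n-1}\hookrightarrow GL_{n-1}(\widehat{R})$ through the finite quotient rings of $R$.

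To establish this I would compare the two congruence structures directly. On the $\Phi_{n}$ side the congruence topology is generated by the principal congruence subgroups $IA\left(\Phi_{n}\right)\left(M\right)$ with $M\leq\Phi_{n}$ finite-index characteristic; on the $G_{n-1}$ side it is generated by the subgroups $G_{n-1}\!\left(I\right):=\ker\!\big(G_{n-1}\to GL_{n-1}(R/I)\big)$ with $I\trianglelefteq R$ of finite index. I would check that under $\pi$ these two filter bases become mutually cofinal: the preimage $\pi^{-1}\!\left(G_{n-1}(I)\right)$ contains some $IA\left(\Phi_{n}\right)\left(M\right)$, and conversely the image $\pi\!\left(IA\left(\Phi_{n}\right)(M)\right)$ contains some $G_{n-1}(I)$. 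Both facts should be read off the Magnus representation: a finite characteristic quotient $\Phi_{n}/M$ is resolved by matrices over a finite quotient $R/I$, and conversely reduction modulo a finite-index ideal of $R$ is visible on a suitable characteristic quotient of $\Phi_{n}$. Granting this, the identity $\hat{\pi}\big(\bigcap_{M}\overline{IA(\Phi_{n})(M)}\big)=\bigcap_{M}\overline{\pi\!\left(IA(\Phi_{n})(M)\right)}$ (valid for a filtered family of closed subgroups of a profinite group under a continuous surjection), together with the mutual cofinality and the identifications $C\left(IA\left(\Phi_{n}\right),\Phi_{n}\right)=\bigcap_{M}\overline{IA(\Phi_{n})(M)}$ and $C\left(G_{n-1}\right)=\bigcap_{I}\overline{G_{n-1}(I)}$, yields $\hat{\pi}\big(C\left(IA\left(\Phi_{n}\right),\Phi_{n}\right)\big)=C\left(G_{n-1}\right)$.

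It then remains to prove that $C\left(G_{n-1}\right)$ is not finitely generated, and for this I would adapt the argument of \cite{key-24-1}: their production of infinitely many independent elements in $\ker\!\big(\widehat{SL_{m}\left(\mathbb{Z}[x]\right)}\to SL_{m}(\widehat{\mathbb{Z}[x]})\big)$ --- powered by the non-finite-generation of the pertinent relative $K_{2}$-type groups and by explicit finite central extensions that split over congruence subgroups --- transfers to $G_{n-1}\leq GL_{n-1}\left(\mathbb{Z}[x^{\pm1}]\right)$ precisely when $n-1\geq3$, i.e.\ $n\geq4$; this is where the hypothesis of the theorem is used. Finally, a continuous image of a finitely generated profinite group is finitely generated, so from $\hat{\pi}\big(C(IA(\Phi_{n}),\Phi_{n})\big)=C\left(G_{n-1}\right)$ and the non-finite-generation of $C\left(G_{n-1}\right)$ we conclude that $C\left(IA\left(\Phi_{n}\right),\Phi_{n}\right)$ is not finitely generated. (One could even shorten the endgame: $C\left(IA\left(\Phi_{n}\right),\Phi_{n}\right)$ is abelian by Theorem \ref{thm:main}, so the mere containment $\hat{\pi}\big(C(IA(\Phi_{n}),\Phi_{n})\big)\supseteq C\left(G_{n-1}\right)$ --- which uses only the ``image'' half of the cofinality --- already suffices, since a closed subgroup of a finitely generated abelian profinite group is finitely generated.)

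I expect the main obstacle to be the congruence-theoretic matching in the middle paragraph: one must identify exactly which finite quotients of $\Phi_{n}$ the Magnus representation detects and line them up with the finite quotient rings of $\mathbb{Z}[x^{\pm1}]$, so that the two congruence kernels genuinely correspond under $\hat{\pi}$. By contrast, the transfer of \cite{key-24-1} in the third paragraph, while not purely formal, should be routine once one verifies that the elementary-matrix and Steinberg relations it relies on are available inside the relative group $G_{n-1}$.
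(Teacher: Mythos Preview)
The present paper does not actually prove this theorem; the proof is deferred to the companion paper \cite{key-14}. What the paper supplies is the two-line sketch in the introduction (the surjection $IA(\Phi_n)\twoheadrightarrow G_{n-1}$ together with the Kassabov--Nikolov input) and the somewhat more precise statement in \S\ref{sec:Inferences} that the maps $\hat\rho_i$ give a decomposition
\[
C(IA(\Phi_n),\Phi_n)=\big(C(IA(\Phi_n),\Phi_n)\cap\textstyle\bigcap_i\ker\hat\rho_i\big)\rtimes\prod_{i=1}^n C_i,
\]
with each $C_i\cong\ker\big(\widehat{SL_{n-1}(\mathbb{Z}[x_i^{\pm1}])}\to SL_{n-1}(\widehat{\mathbb{Z}[x_i^{\pm1}]})\big)$ not finitely generated by \cite{key-24-1}. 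Since the $C_i$ are central this is a direct product, so each $C_i$ is a quotient of $C(IA(\Phi_n),\Phi_n)$, and non-finite-generation follows.

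Your plan reproduces this strategy and is sound. The one difference worth flagging is that the argument recorded in \S\ref{sec:Inferences} does not work purely with the surjection $\hat\pi$ but rather with a \emph{section} of $\rho_i$, coming from the ring inclusion $\mathbb{Z}[x_i^{\pm1}]\hookrightarrow R_n$ and the identification $IGL_{n-1,i}\cong GL_{n-1}(R_n,\sigma_iR_n)$ of Proposition~\ref{prop:iso}. This makes the congruence-compatibility step you single out as the main obstacle essentially a one-liner: the section visibly carries $GL_{n-1}\big(\mathbb{Z}[x_i^{\pm1}],(x_i^m-1,m)\big)$ into $IG_{n,m}$, because $H_{n,m}\cap\mathbb{Z}[x_i^{\pm1}]=(x_i^m-1,m)$. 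Applying $\hat\rho_i$ to the embedded copy of $C_i$ then also gives your containment $\hat\pi\big(C(IA(\Phi_n),\Phi_n)\big)\supseteq C(G_{n-1})$, so the two formulations are equivalent; the section just makes the verification that the two congruence filtrations match up transparent, whereas in your formulation one has to argue that $\pi(IG_{n,m})$ contains a full $G_{n-1}(I)$, which is less obvious without producing the section anyway.
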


We remark that despite the result of the latter theorem, we do not
know whether $C\left(\Phi_{n}\right)$ is also not finitely generated.
In fact we cannot even prove at this point that it is not trivial
(for more, see $\varoint$\ref{sec:Inferences}). 

We would like now to give a conceptual explanation for the dichotomy
between $n=2,3$ and $n\geq4$. Let us recall first the strategy of
Bachmuth and Mochizuki \cite{key-24}, showing that the natural map
$Aut\left(F_{n}\right)\to Aut\left(\Phi_{n}\right)$ is surjective
for $n\geq4$. They start with the observation that as $Aut\left(F_{n}\right)\to GL_{n}(\mathbb{Z})$
is surjective, it is enough to show that every element of $IA\left(\Phi_{n}\right)$
is induced by an element of $IA\left(F_{n}\right)=\ker\left(Aut\left(F_{n}\right)\to Aut\left(F_{n}/F'_{n}\right)=GL_{n}\left(\mathbb{Z}\right)\right)$.
From here, the basic background for their proof are the following
facts:
\begin{itemize}
\item For every $n$ (not only for $n\geq4$), $IA\left(\Phi_{n}\right)$
can naturally be viewed as a subgroup of $GL_{n}\left(R_{n}\right)$
where $R_{n}=\mathbb{Z}[x_{1}^{\pm1},\ldots,x_{n}^{\pm1}]$ is the
free Laurent polynomial ring on $n$ commutative variables over $\mathbb{Z}$
(see $\varoint$\ref{sec:structure} for the details). It can be easily
shown that for every $1\leq i\leq n$, this embedding contains a copy
of the group
\[
IGL_{n-1,i}:=\ker(GL_{n-1}\left(R_{n}\right)\overset{x_{i}\mapsto1}{\longrightarrow}GL_{n-1}\left(R_{n}/(x_{i}-1)R_{n}\right)).
\]
\item By a classical result of Magnus (\cite{key-1}, Chapter 3, Theorem
N4) the group $IA\left(F_{n}\right)$ is finitely generated by a well
described generating set of its elements, say $S$ (also here, it
is true for every $n$). Denote the image of $S$ in $IA\left(\Phi_{n}\right)$
by $\bar{S}$. 
\end{itemize}
The technique of \cite{key-24} was to show that when $n\geq4$ the
set $\bar{S}$ generates the whole of $IA\left(\Phi_{n}\right)$.
In Section 3, \cite{key-24} start with presenting a variety of types
of elements that are contained in the subgroup of $IA\left(\Phi_{n}\right)$
generated by $\bar{S}$ - here, \cite{key-24} already needed the
assumption of $n\geq4$. In section 5, \cite{key-24} show that every
element $g\in IA\left(\Phi_{n}\right)$ can be written as a product
of elements
\[
g=h_{0}k_{1}h_{1}k_{2}h_{2}\cdot\ldots\cdot k_{n}h_{n}
\]
where $k_{i}\in IGL_{n-1,i}$ and $h_{i}$ are elements generated
by $\bar{S}$ (by Section 3) - so it remains to show that $IGL_{n-1,i}$
is generated by $\bar{S}$. Then, in the beginning of Section 4, by
some classical results from Algebraic K-Theory, \cite{key-24} manage
to give a description for a generating set to $IGL_{n-1,i}$. From
here, the rest of Section 4 is devoted to show that the generating
set of $IGL_{n-1,i}$ can be built from the elements of $\bar{S}$. 

The aforementioned results from Algebraic K-Theory are strongly leaning
on the assumption $n\geq4$, i.e. $n-1\geq3$. Actually, when $n=3$,
i.e. $n-1=2$, the situation is completely different, and leads to
the fact that $Aut(\Phi_{3})$ is not finitely generated (See \cite{key-2}).
In some sense, what we show in this paper is that this difference
between $Aut(\Phi_{3})$ and $Aut(\Phi_{n\geq4})$, combined with
the dichotomy between $n=2$ and $n\geq3$ in the CSP for the abelian
case, induces a dichotomy between $n=2,3$ and $n\geq4$ in the CSP
for the metabelian case.

The main line of the proof of Theorem \ref{thm:main}, is as follows:
For $G=IA\left(\Phi_{n}\right)$ we first take the principal congruence
subgroups $G\left(M_{n,m}\right)$ where $M_{n,m}=\left(\Phi'_{n}\Phi_{n}^{m}\right)'\left(\Phi'_{n}\Phi_{n}^{m}\right)^{m}$.
By \cite{key-6}, $\hat{\Phi}_{n}=\underleftarrow{\lim}\left(\Phi_{n}/M_{n,m}\right)$,
and thus we deduce that the subgroups of the form $G\left(M_{n,m}\right)$
are enough to represent the congruence subgroups of $IA(\Phi_{n})$
in the sense that every congruence subgroup contains one of these
principal congruence subgroups. Then, we follow the steps of the theorem
of Bachmuth and Mochizuki \cite{key-24}, showing that $Aut\left(F_{n}\right)\to Aut\left(\Phi_{n}\right)$
is surjective for $n\geq4$, and we try to build $G\left(M_{n,m}\right)$
using elements of $\left\langle IA\left(\Phi_{n}\right)^{m}\right\rangle $.

Along this paper, mostly in $\varoint$\ref{sec:elementary} and Claim
\ref{claim:}, we present a variety of types of elements that are
contained in $\left\langle IA\left(\Phi_{n}\right)^{m}\right\rangle $.
In $\varoint$\ref{sec:The-main-lemma} we prove a main lemma, which
can be viewed as a counterpart of Section 5 in \cite{key-24}. A counterpart
of Section 4 in \cite{key-24} is proven in Section 7 of \cite{key-14}
(see Lemma \ref{thm:stage 2} in this paper). These parts are combined
together in $\varoint$\ref{sec:structure-1}, and by some additional
results from algebraic K-theory we get that for every $m$
\[
\left\langle IA\left(\Phi_{n}\right)^{m}\right\rangle G\left(M_{n,m^{4}}\right)/\left\langle IA\left(\Phi_{n}\right)^{m}\right\rangle \,\,\,\,\,\,\,\,\textrm{(notice\,\,the}\,\,m^{4}\textrm{)}
\]
is finite and central in $IA\left(\Phi_{n}\right)/\left\langle IA\left(\Phi_{n}\right)^{m}\right\rangle $.
Hence, $\left\langle IA\left(\Phi_{n}\right)^{m}\right\rangle $ is
of finite index in $IA\left(\Phi_{n}\right)$. In particular, as every
normal subgroup of index $m$ in $IA\left(\Phi_{n}\right)$ contains
$\left\langle IA\left(\Phi_{n}\right)^{m}\right\rangle $, we deduce
that the groups of the form $\left\langle IA\left(\Phi_{n}\right)^{m}\right\rangle $
are enough to represent the finite index subgroups of $IA\left(\Phi_{n}\right)$.
From here, it follows easily that $C\left(IA\left(\Phi_{n}\right),\Phi_{n}\right)$
is central in $\widehat{IA\left(\Phi_{n}\right)}$ (see Corollary
\ref{cor:kernel}). 

We hope that the solution of the free metabelian case will help to
understand some new cases of non-abelian groups, such as the automorphism
group of a free group and the mapping class group of a surface. The
immediate next challenges are the automorphism groups of free solvable
groups.

Let us point out that, as remarked in$\varoint$5 in \cite{key-7},
one can deduce from Theorem \ref{thm:full} that for every $n\geq4$,
$Aut\left(\Phi_{n}\right)$ is not large, i.e does not contain a finite
index subgroup which can be mapped onto a free group. This is in contrast
with $Aut\left(\Phi_{2}\right)$ and $Aut\left(\Phi_{3}\right)$ which
are large.

The paper is organized as follows: In $\varoint$\ref{sec:structure}
we present some notations and discuss $IA\left(\Phi_{n}\right)$ and
some of its subgroups. Then, up to a main lemma, in $\varoint$\ref{sec:structure-1}
we prove the main theorem of the paper, Theorem \ref{thm:main}. In
$\varoint$\ref{sec:elementary} we present some elements of $\left\langle IA\left(\Phi_{n}\right)^{m}\right\rangle $
which we use in the proof of the main lemma. In $\varoint$\ref{sec:The-main-lemma}
we prove the main lemma. We end the paper with the proof of Theorem
\ref{thm:full}, and some remarks on the problem of computing $C\left(\Phi_{n}\right)$
and $C\left(IA\left(\Phi_{n}\right),\Phi_{n}\right)$.

\textbf{Acknowledgements:} I wish to offer my deepest thanks to my
great supervisor Prof. Alexander Lubotzky for his sensitive and devoted
guidance, and to the Rudin foundation trustees for their generous
support during the period of the research. 

\section{\label{sec:structure}Some properties of $IA\left(\Phi_{n}\right)$
and its subgroups}

Let $G=IA\left(\Phi_{n}\right)=\ker\left(Aut\left(\Phi_{n}\right)\to Aut\left(\Phi_{n}/\Phi'_{n}\right)=GL_{n}\left(\mathbb{Z}\right)\right)$.
We start with recalling some of the properties of $G=IA\left(\Phi_{n}\right)$
and its subgroups, as presented in Section 3 in \cite{key-14}. We
also refer the reader to \cite{key-14} for the proofs of the statements
in this section. We start with the following notations:
\begin{itemize}
\item $\Phi_{n}=F_{n}/F''_{n}$= the free metabelian group on $n$ elements.
Here $F''_{n}$ denotes the second derivative of $F_{n}$, the free
group on $n$ elements.
\item $\Phi_{n,m}=\Phi_{n}/M_{n,m}$, where $M_{n,m}=\left(\Phi'_{n}\Phi_{n}^{m}\right)'\left(\Phi'_{n}\Phi_{n}^{m}\right)^{m}$.
\item $IG_{n,m}=G(M_{n,m})=\ker\left(IA\left(\Phi_{n}\right)\to Aut\left(\Phi_{n,m}\right)\right).$
\item $IA_{n}^{m}=\left\langle IA\left(\Phi_{n}\right)^{m}\right\rangle $.
\item $R_{n}=\mathbb{Z}[\mathbb{Z}^{n}]=\mathbb{Z}[x_{1}^{\pm1},\ldots,x_{n}^{\pm1}]$
where $x_{1},\ldots,x_{n}$ are the generators of $\mathbb{Z}^{n}$.
\item $\mathbb{Z}_{m}=\mathbb{Z}/m\mathbb{Z}$.
\item $\sigma_{i}=x_{i}-1$ for $1\leq i\leq n$. We also denote by $\vec{\sigma}$
the column vector which has $\sigma_{i}$ in its $i$-th entry.
\item $\mathfrak{A}_{n}=\sum_{i=1}^{n}\sigma_{i}R_{n}$ = the augmentation
ideal of $R_{n}$.
\item $H_{n,m}=\ker\left(R_{n}\to\mathbb{Z}_{m}[\mathbb{Z}_{m}^{n}]\right)=\sum_{i=1}^{n}\left(x_{i}^{m}-1\right)R_{n}+mR_{n}$.
\end{itemize}
By the well known Magnus embedding (see \cite{key-36}, \cite{key-37},
\cite{key-35-1}), one can identify $\Phi_{n}$ with the matrix group
\[
\Phi_{n}=\left\{ \left(\begin{array}{cc}
g & a_{1}t_{1}+\ldots+a_{n}t_{n}\\
0 & 1
\end{array}\right)\,|\,g\in\mathbb{Z}^{n},\,a_{i}\in R_{n},\,g-1=\sum_{i=1}^{n}a_{i}(x_{i}-1)\right\} 
\]
where $t_{i}$ is a free basis for $R_{n}$-module, under the identification
of the generators of $\Phi_{n}$ with the matrices
\[
\left(\begin{array}{cc}
x_{i} & t_{i}\\
0 & 1
\end{array}\right)\,\,\,\,\,1\leq i\leq n.
\]
Moreover, for every $\alpha\in IA\left(\Phi_{n}\right)$, one can
describe $\alpha$ by its action on the generators of $\Phi_{n}$,
by
\[
\alpha:\left(\begin{array}{cc}
x_{i} & t_{i}\\
0 & 1
\end{array}\right)\mapsto\left(\begin{array}{cc}
x_{i} & a_{i,1}t_{1}+\ldots+a_{i,n}t_{n}\\
0 & 1
\end{array}\right)
\]
and this description gives an injective homomorphism (see \cite{key-13},
\cite{key-36})
\begin{eqnarray*}
IA\left(\Phi_{n}\right) & \hookrightarrow & GL_{n}\left(R_{n}\right)\\
\textrm{defined by}\,\,\,\,\alpha & \mapsto & \left(\begin{array}{ccc}
a_{1,1} & \cdots & a_{1,n}\\
\vdots &  & \vdots\\
a_{n,1} & \cdots & a_{n,n}
\end{array}\right)
\end{eqnarray*}
which gives an identification of $IA\left(\Phi_{n}\right)$ with the
subgroup
\begin{eqnarray*}
IA\left(\Phi_{n}\right) & = & \left\{ A\in GL_{n}\left(R_{n}\right)\,|\,A\vec{\sigma}=\vec{\sigma}\right\} \\
 & = & \left\{ I_{n}+A\in GL_{n}\left(R_{n}\right)\,|\,A\vec{\sigma}=\vec{0}\right\} .
\end{eqnarray*}

One can find the proof of the following proposition in \cite{key-14}
(Propositions 3.1 and 3.2):
\begin{prop}
\label{prop:augmentation}Let $I_{n}+A\in IA\left(\Phi_{n}\right)$.
Then:
\end{prop}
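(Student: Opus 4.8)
The proposition records the basic constraints satisfied by the matrix $A$ when $I_{n}+A$ lies in $IA(\Phi_{n})$, and the plan is to extract all of them from the single defining relation $A\vec{\sigma}=\vec{0}$, using that $\sigma_{1},\dots,\sigma_{n}$ is a regular sequence in $R_{n}$. For $0\le i\le n$ the quotient $R_{n}/(\sigma_{1},\dots,\sigma_{i})$ is obtained from $R_{n}$ by setting $x_{1}=\dots=x_{i}=1$, so it is isomorphic to $\mathbb{Z}[x_{i+1}^{\pm1},\dots,x_{n}^{\pm1}]$ (and to $\mathbb{Z}$ when $i=n$), in particular an integral domain. Hence $\sigma_{i+1}=x_{i+1}-1$ is a non-zero-divisor modulo $(\sigma_{1},\dots,\sigma_{i})$, so $\sigma_{1},\dots,\sigma_{n}$ is a regular sequence; the Koszul complex on it is therefore acyclic in positive degrees, and consequently the first syzygy module $\{v\in R_{n}^{n}:v\vec{\sigma}=0\}$ is generated by the Koszul relations $\sigma_{k}e_{l}-\sigma_{l}e_{k}$ for $1\le k<l\le n$.

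Now let $I_{n}+A\in IA(\Phi_{n})$, so $A\vec{\sigma}=\vec{0}$, i.e. every row of $A$ lies in the syzygy module just described. Writing each row as an $R_{n}$-combination of the vectors $\sigma_{k}e_{l}-\sigma_{l}e_{k}$ and noting that every coordinate of such a vector is a multiple of some $\sigma_{k}$, we get that every entry of $A$ lies in $\mathfrak{A}_{n}=\sum_{k}\sigma_{k}R_{n}$, that is $A\in M_{n}(\mathfrak{A}_{n})$ -- and at the same time this yields the explicit presentation of the rows of $A$ in terms of the relations $\sigma_{k}e_{l}-\sigma_{l}e_{k}$. For the determinant, apply the augmentation $\varepsilon\colon R_{n}\to\mathbb{Z}$, whose kernel is $\mathfrak{A}_{n}$: from $\varepsilon(A)=0$ we obtain $\varepsilon(\det(I_{n}+A))=\det(I_{n})=1$, hence $\det(I_{n}+A)\equiv1\pmod{\mathfrak{A}_{n}}$. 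On the other hand $\det(I_{n}+A)$ is a unit of $R_{n}=\mathbb{Z}[\mathbb{Z}^{n}]$, and the units of $\mathbb{Z}[\mathbb{Z}^{n}]$ are exactly the trivial units $\pm x_{1}^{c_{1}}\cdots x_{n}^{c_{n}}$ with $c_{i}\in\mathbb{Z}$; the only such unit congruent to $1$ modulo $\mathfrak{A}_{n}$ is $1$ itself, so $\det(I_{n}+A)=1$.

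The one step that is not mere bookkeeping is the identification of the syzygies of $\vec{\sigma}$ with the Koszul relations, equivalently the regularity of $\sigma_{1},\dots,\sigma_{n}$ in $R_{n}$; this is where the argument is genuinely rigid, since it uses that $R_{n}$ is a Laurent polynomial ring so that the successive quotients remain domains. I would also flag that any congruence-level refinement of the statement -- for instance an identification of $IG_{n,m}$ with the set of $I_{n}+A\in IA(\Phi_{n})$ having $A\in M_{n}(H_{n,m})$ -- cannot be obtained simply by repeating this argument modulo $H_{n,m}$, because $x_{i}-1$ is a zero-divisor in $\mathbb{Z}_{m}[\mathbb{Z}_{m}^{n}]$ and the syzygy module there is strictly larger than the Koszul submodule; such a refinement would require a separate argument, e.g. tracing through the Magnus embedding directly or using the explicit structure of $\mathbb{Z}_{m}[\mathbb{Z}_{m}^{n}]$.
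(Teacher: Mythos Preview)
Your determinant conclusion is wrong. You argue that since $\varepsilon(\det(I_{n}+A))=1$ and the units of $R_{n}$ are $\pm\prod x_{i}^{c_{i}}$, the determinant must equal $1$; but the step ``the only such unit congruent to $1$ modulo $\mathfrak{A}_{n}$ is $1$ itself'' is false. Every positive monomial $\prod x_{i}^{c_{i}}$ satisfies $\varepsilon(\prod x_{i}^{c_{i}})=1$, i.e.\ is $\equiv 1\pmod{\mathfrak{A}_{n}}$ (indeed $x_{i}-1=\sigma_{i}\in\mathfrak{A}_{n}$). The augmentation pins down only the sign, not the exponents. And $\det(I_{n}+A)=1$ is genuinely false in general: for $n\ge 2$ the element $I_{n}+\sigma_{2}E_{1,1}-\sigma_{1}E_{1,2}$ lies in $IA(\Phi_{n})$ and has determinant $x_{2}$; such elements are used throughout the paper. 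Fortunately the proposition does not assert $\det=1$; it asserts exactly that $\det(I_{n}+A)=\prod_{r}x_{r}^{s_{r}}$, and this is what your argument (units of $\mathbb{Z}[\mathbb{Z}^{n}]$ are trivial, augmentation forces sign $+$) correctly establishes once you delete the overreaching final clause.

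On the first bullet, note that the proposition claims more than $a_{k,l}\in\mathfrak{A}_{n}$: it says $a_{k,l}\in\sum_{i\ne l}\sigma_{i}R_{n}$, i.e.\ the column index $l$ is excluded from the sum. Your Koszul description actually yields this (the $l$-th coordinate of $\sigma_{i}e_{j}-\sigma_{j}e_{i}$ is always a multiple of some $\sigma_{m}$ with $m\ne l$), but you only stated the weaker conclusion. There is also a more elementary route that avoids Koszul acyclicity altogether: from $\sum_{j}a_{k,j}\sigma_{j}=0$, reduce modulo the ideal $\sum_{i\ne l}\sigma_{i}R_{n}$ (set $x_{i}=1$ for $i\ne l$) to get $\overline{a_{k,l}}\cdot\sigma_{l}=0$ in the domain $\mathbb{Z}[x_{l}^{\pm1}]$, hence $\overline{a_{k,l}}=0$, which is precisely $a_{k,l}\in\sum_{i\ne l}\sigma_{i}R_{n}$. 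This is presumably closer to the argument in \cite{key-14} that the paper cites.
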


\begin{itemize}
\item \textit{If one denotes the entries of $A$ by $a_{k,l}$ for $1\leq k,l\leq n$,
then for every $1\leq k,l\leq n$, $a_{k,l}\in\sum_{l\neq i=1}^{n}\sigma_{i}R_{n}\subseteq\mathfrak{A}_{n}$.}
\item \textit{$\det\left(I_{n}+A\right)$ is of the form $\det\left(I_{n}+A\right)=\prod_{r=1}^{n}x_{r}^{s_{r}}$
for some $s_{r}\in\mathbb{Z}$.}
\end{itemize}
Consider now the map
\[
\begin{array}{c}
\Phi_{n}=\left\{ \left(\begin{array}{cc}
g & a_{1}t_{1}+\ldots+a_{n}t_{n}\\
0 & 1
\end{array}\right)\,|\,g\in\mathbb{Z}^{n},\,a_{i}\in R_{n},\,g-1=\sum_{i=1}^{n}a_{i}(x_{i}-1)\right\} \\
\downarrow\\
\left\{ \left(\begin{array}{cc}
g & a_{1}t_{1}+\ldots+a_{n}t_{n}\\
0 & 1
\end{array}\right)\,|\,g\in\mathbb{Z}_{m}^{n},\,a_{i}\in\mathbb{Z}_{m}[\mathbb{Z}_{m}^{n}],\,g-1=\sum_{i=1}^{n}a_{i}(x_{i}-1)\right\} 
\end{array}
\]
which induced by the projections $\mathbb{Z}^{n}\to\mathbb{Z}_{m}^{n}$,
$R_{n}=\mathbb{Z}[\mathbb{Z}^{n}]\to\mathbb{Z}_{m}[\mathbb{Z}_{m}^{n}]$.
Using result of Romanovski\u{\i} \cite{key-40}, it is shown in \cite{key-6}
that this map is surjective and that $\Phi_{n,m}$ is canonically
isomorphic to its image. Therefore, we can identify the principal
congruence subgroup of $IA\left(\Phi_{n}\right)$, $IG_{n,m}$, with
\begin{eqnarray*}
IG_{n,m} & = & \left\{ A\in\ker\left(GL_{n}\left(R_{n}\right)\to GL_{n}\left(\mathbb{Z}_{m}[\mathbb{Z}_{m}^{n}]\right)\right)\,|\,A\vec{\sigma}=\vec{\sigma}\right\} \\
 & = & \left\{ I_{n}+A\in GL_{n}\left(R_{n},H_{n,m}\right)\,|\,A\vec{\sigma}=\vec{0}\right\} .
\end{eqnarray*}

Let us step forward with the following definitions:
\begin{defn}
Let $A\in GL_{n}\left(R_{n}\right)$, and for $1\leq i\leq n$, denote
by $A_{i,i}$ the minor which obtained from $A$ by erasing its $i$-th
row and $i$-th column. Now, for every $1\leq i\leq n$, define the
subgroup $IGL_{n-1,i}\leq IA\left(\Phi_{n}\right)$, by
\[
IGL_{n-1,i}=\left\{ I_{n}+A\in IA\left(\Phi_{n}\right)\,|\,\begin{array}{c}
\textrm{The\,\,}i\textrm{-th\,\, row\,\, of\,\,}A\textrm{\,\, is\,\,0,}\\
I_{n-1}+A_{i,i}\in GL_{n-1}\left(R_{n},\sigma_{i}R_{n}\right)
\end{array}\right\} 
\]
where:
\[
GL_{n-1}\left(R_{n},\sigma_{i}R_{n}\right)=\ker(GL_{n-1}\left(R_{n}\right)\longrightarrow GL_{n-1}\left(R_{n}/\sigma_{i}R_{n}\right)).
\]

The following proposition is proven in \cite{key-14} (Proposition
3.4): 
\end{defn}

\begin{prop}
\label{prop:iso}For every $1\leq i\leq n$ we have $IGL_{n-1,i}\cong GL_{n-1}\left(R_{n},\sigma_{i}R_{n}\right)$.
\end{prop}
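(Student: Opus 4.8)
The plan is to write down the obvious candidate map and verify it is an isomorphism. Define
\[
\phi\colon IGL_{n-1,i}\longrightarrow GL_{n-1}\left(R_{n},\sigma_{i}R_{n}\right),\qquad\phi\left(I_{n}+A\right)=I_{n-1}+A_{i,i},
\]
i.e.\ restrict to the $(n-1)\times(n-1)$ block obtained by deleting the $i$-th row and $i$-th column. By the very definition of $IGL_{n-1,i}$ the image lies in $GL_{n-1}\left(R_{n},\sigma_{i}R_{n}\right)$, so $\phi$ is well defined; everything else is checking that it is a bijective homomorphism, using only that $R_{n}$ is an integral domain and the description $IA\left(\Phi_{n}\right)=\{I_{n}+A\in GL_{n}\left(R_{n}\right)\mid A\vec{\sigma}=\vec{0}\}$.

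For the homomorphism property, the key observation is that for $I_{n}+A,\,I_{n}+B\in IGL_{n-1,i}$, since the $i$-th row of $B$ vanishes, for all $k,l\neq i$ one has $(AB)_{k,l}=\sum_{m}a_{k,m}b_{m,l}=\sum_{m\neq i}a_{k,m}b_{m,l}$; hence deleting row and column $i$ commutes with the product, and $\left(A+B+AB\right)_{i,i}=A_{i,i}+B_{i,i}+A_{i,i}B_{i,i}$, giving $\phi\left(\left(I_{n}+A\right)\left(I_{n}+B\right)\right)=\phi\left(I_{n}+A\right)\phi\left(I_{n}+B\right)$. (The same zero-row remark shows the zero-$i$-th-row condition is preserved under products and inverses, so $IGL_{n-1,i}$ is indeed a subgroup.) For injectivity, if $A_{i,i}=0$ then the only entries of $A$ that can be nonzero are the $a_{k,i}$ with $k\neq i$; but then the $k$-th row of $A\vec{\sigma}=\vec{0}$ reads $a_{k,i}\sigma_{i}=0$, and since $R_{n}$ is a domain with $\sigma_{i}\neq0$ this forces $a_{k,i}=0$, so $A=0$.

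For surjectivity, take $I_{n-1}+C\in GL_{n-1}\left(R_{n},\sigma_{i}R_{n}\right)$ and write each entry as $c_{k,l}=\sigma_{i}d_{k,l}$ with $d_{k,l}\in R_{n}$ (possible precisely because of the congruence condition). Build $I_{n}+A$ by placing $C$ in the block indexed by $\{1,\dots,n\}\setminus\{i\}$, zeros in the $i$-th row, and $a_{k,i}=-\sum_{l\neq i}d_{k,l}\sigma_{l}$ in the $i$-th column for $k\neq i$. Then the $k$-th row of $A\vec{\sigma}$ is $\sum_{l\neq i}c_{k,l}\sigma_{l}+a_{k,i}\sigma_{i}=0$ by construction, and the $i$-th row is trivially zero, so $A\vec{\sigma}=\vec{0}$; moreover the $i$-th row of $I_{n}+A$ is the $i$-th standard unit row vector, so cofactor expansion along it gives $\det\left(I_{n}+A\right)=\det\left(I_{n-1}+C\right)\in R_{n}^{\times}$, whence $I_{n}+A\in GL_{n}\left(R_{n}\right)$. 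Therefore $I_{n}+A\in IA\left(\Phi_{n}\right)$, it lies in $IGL_{n-1,i}$ by construction, and $\phi\left(I_{n}+A\right)=I_{n-1}+C$.

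The only genuinely load-bearing step is this last one: one must be able to solve $a_{k,i}\sigma_{i}=-\sum_{l\neq i}c_{k,l}\sigma_{l}$ inside $R_{n}$, and this works exactly because the congruence condition puts the $c_{k,l}$ in $\sigma_{i}R_{n}$, with the solution unique since $R_{n}$ is an integral domain. So I expect no real obstacle; the proposition is essentially a bookkeeping statement once that divisibility is isolated, and Proposition \ref{prop:augmentation} is not needed for it.
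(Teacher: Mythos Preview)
Your argument is correct. The paper does not actually prove this proposition but defers to \cite{key-14}, so there is no in-paper proof to compare against; in any case the map $I_{n}+A\mapsto I_{n-1}+A_{i,i}$ together with the explicit inverse you construct (using $c_{k,l}=\sigma_{i}d_{k,l}$ to fill the $i$-th column) is precisely the natural isomorphism, and your verification of the homomorphism, injectivity, and surjectivity steps is clean and complete.
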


We recall the following definitions from Algebraic K-Theory:
\begin{defn}
Let $R$ be a commutative ring (with identity), $H\vartriangleleft R$
an ideal, and $d\in\mathbb{N}$. Then:
\end{defn}

\begin{itemize}
\item $E_{d}\left(R\right)=\left\langle I_{d}+rE_{i,j}\,|\,r\in R,\,1\leq i\neq j\leq d\right\rangle \leq SL_{d}\left(R\right)$
where $E_{i,j}$ is the matrix which has $1$ in the $\left(i,j\right)$-th
entry and $0$ elsewhere.
\item $SL_{d}\left(R,H\right)=\ker\left(SL_{d}\left(R\right)\to SL_{d}\left(R/H\right)\right)$.
\item $GL_{d}\left(R,H\right)=\ker\left(GL_{d}\left(R\right)\to GL_{d}\left(R/H\right)\right).$
\item $E_{d}\left(R,H\right)$ = the normal subgroup of $E_{d}\left(R\right)$,
which is generated as a normal subgroup by the elementary matrices
of the form $I_{d}+hE_{i,j}$ for $h\in H$.
\end{itemize}
Under the above identification of $IGL_{n-1,i}$ with $GL_{n-1}\left(R_{n},\sigma_{i}R_{n}\right)$,
for every $1\leq i\leq n$ we define:
\begin{defn}
Let $H\vartriangleleft R_{n}$. Then:
\begin{eqnarray*}
ISL_{n-1,i}\left(H\right) & = & IGL_{n-1,i}\cap SL_{n-1}\left(R_{n},H\right)\\
IE_{n-1,i}\left(H\right) & = & IGL_{n-1,i}\cap E{}_{n-1}\left(R_{n},H\right)\leq ISL_{n-1,i}\left(H\right).
\end{eqnarray*}
\end{defn}

\section{\label{sec:structure-1}The main theorem's proof}

Using the above notations we prove in $\varoint$\ref{sec:The-main-lemma}
the following main lemma:
\begin{lem}
\label{thm:stage 1}For every $n\geq4$ and $m\in\mathbb{N}$ one
has
\begin{eqnarray*}
IG_{n,m^{2}} & \subseteq & IA_{n}^{m}\cdot\prod_{i=1}^{n}ISL_{n-1,i}\left(\sigma_{i}H_{n,m}\right)\\
 & = & IA_{n}^{m}\cdot ISL_{n-1,1}\left(\sigma_{1}H_{n,m}\right)\cdot\ldots\cdot ISL_{n-1,n}\left(\sigma_{n}H_{n,m}\right).
\end{eqnarray*}
\end{lem}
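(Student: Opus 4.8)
The plan is to start with an arbitrary element $I_n + A \in IG_{n,m^2}$, so $A$ has all entries in $H_{n,m^2}$ and $A\vec\sigma = \vec 0$, and to peel off, one coordinate at a time, a factor lying in some $ISL_{n-1,i}(\sigma_i H_{n,m})$, while the remainder is pushed into $IA_n^m = \langle IA(\Phi_n)^m\rangle$. The key structural input is the identification $IGL_{n-1,i}\cong GL_{n-1}(R_n,\sigma_i R_n)$ from Proposition \ref{prop:iso}, together with Proposition \ref{prop:augmentation}: the $(k,l)$ entry of $A$ lies in $\sum_{i\neq l}\sigma_i R_n$, and $\det(I_n+A)$ is a monomial $\prod x_r^{s_r}$. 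First I would observe that since $A$ reduces to $0$ mod $H_{n,m^2}\subseteq H_{n,m}^{?}$ — more precisely, since the entries of $A$ are $m^2$-divisible-plus-$(x_i^{m^2}-1)$-multiples — each entry can be written, modulo something manifestly in $IA_n^m$, as an $R_n$-combination of the $\sigma_i$ with coefficients in $H_{n,m}$; the point of squaring the level is exactly to have room to split a product of two "$m$-small" pieces.

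The heart of the argument is an induction on the number of nonzero rows of $A$ (equivalently, a Gaussian-elimination over $R_n$ performed inside $IA(\Phi_n)$). Working in row $i$: the relation $A\vec\sigma=\vec 0$ forces the $i$-th row $(a_{i,1},\dots,a_{i,n})$ to satisfy $\sum_l a_{i,l}\sigma_l = 0$, and by Proposition \ref{prop:augmentation} each $a_{i,l}\in\sum_{j\neq l}\sigma_j R_n$; I would use the standard presentation of the syzygies of $(\sigma_1,\dots,\sigma_n)$ over $R_n$ (they are generated by the Koszul relations $\sigma_l\cdot\sigma_j - \sigma_j\cdot\sigma_l$, since the $\sigma_i$ form a regular sequence in $R_n=\mathbb Z[x_1^{\pm1},\dots,x_n^{\pm1}]$) to rewrite the $i$-th row as a sum of elementary "row pieces", each of which is realized by an elementary matrix $I_n + h E_{k,l}$ with $h$ divisible by some $\sigma_j$ and lying in the relevant ideal. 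Each such elementary generator, with $h\in\sigma_i H_{n,m}$, lies in $IE_{n-1,i}(\sigma_i H_{n,m})\subseteq ISL_{n-1,i}(\sigma_i H_{n,m})$; the $m$-th-power part of each coefficient gets absorbed into $IA_n^m$ (using that $(I_n+hE_{k,l})^m \equiv I_n + mhE_{k,l}$ modulo the commutator-type terms, so elements with $m$-divisible entries are products of $m$-th powers up to lower-order corrections). After clearing row $i$ this way for each $i$, what remains is a matrix whose rows are all in the span handled by the $ISL_{n-1,i}$ factors — but one must check it can genuinely be ordered as a single product $IA_n^m\cdot ISL_{n-1,1}(\cdots)\cdots ISL_{n-1,n}(\cdots)$, which is where the normality of $E_{n-1}(R_n,H)$ in $E_{n-1}(R_n)$ and commutator estimates between the various $IGL_{n-1,i}$ come in (commutators of such "disjoint-support" matrices land in deeper congruence levels, hence again in $IA_n^m$ once we have the level-$m^2$ hypothesis).

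The determinant constraint is the last loose end: after the elimination the leftover factor has determinant a monomial $\prod x_r^{s_r}$ with $s_r\equiv 0$ in the appropriate sense, and I would dispose of the monomial "torus" part by an explicit diagonal element of $IA(\Phi_n)$ raised to a suitable power (hence in $IA_n^m$), reducing to the case $\det = 1$ where the Whitehead-type lemma $SL_{n-1} = E_{n-1}$ for $n-1\geq 3$ (here is exactly where $n\geq 4$ is used — this is the only place the hypothesis is truly needed, mirroring the Bachmuth–Mochizuki surjectivity and the Bass–Milnor–Serre stable range) lets us write the factor as a product of elementary matrices, each again distributed among $IA_n^m$ and the $ISL_{n-1,i}(\sigma_i H_{n,m})$. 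I expect the main obstacle to be the bookkeeping in the second paragraph: controlling, coefficient by coefficient, which elementary pieces land in $\sigma_i H_{n,m}$ versus which must be shunted into $IA_n^m$, and verifying that the reordering into the stated ordered product does not leak out of these subgroups — this is precisely the content that gets carried out via the explicit elements computed in $\varoint\ref{sec:elementary}$.
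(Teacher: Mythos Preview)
Your high-level architecture is right --- an inductive ``peel off one $ISL_{n-1,i}(\sigma_i H_{n,m})$ factor at a time, absorbing the error into $IA_n^m$'' --- and this is exactly what the paper does. But two of the load-bearing steps in your outline are genuine gaps, not just bookkeeping.

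First, your mechanism for absorbing pieces into $IA_n^m$ is too weak. You invoke only $(I_n+hE_{k,l})^m = I_n+mhE_{k,l}$, which gives you entries in $mR_n$. But $H_{n,m^2}$ is generated by $m^2$ \emph{and} by the $x_r^{m^2}-1$, and the latter contribute terms of the form $\sigma_r\mu_{r,m}\cdot(\text{stuff})$ that are not $m$-divisible at all. The paper's Proposition~\ref{prop:reduction1} makes this precise: one has $x^{m^2}-1\in(x-1)^2(x^m-1)R+(x-1)^2mR+(x-1)m^2R$, so $IG_{n,m^2}$ lands in a group $\mathbb{J}_m$ whose defining ideal involves $\sigma_r^3 U_{r,m}$, not just $O_m$. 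To absorb those pieces into $IA_n^m$ the paper needs the explicit commutator computations of \S\ref{sec:elementary} (Propositions~\ref{prop:type 1.1}, \ref{prop:type 1.2}, \ref{prop:type 2}), which produce elements of $IA_n^m$ with prescribed entries in $\sigma_k\mu_{k,m}R_n$ and even ``type~2'' $2\times2$ blocks. Your sentence ``elements with $m$-divisible entries are products of $m$-th powers up to lower-order corrections'' does not reach these, and without them the induction cannot close.

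Second, the induction is not on ``the number of nonzero rows of $A$'' but on a variable index $u$: the paper filters by $\tilde{\mathbb A}_u = IA(\Phi_n)\cap GL_n(R,\tilde{\mathfrak A}_u)$ with $\tilde{\mathfrak A}_u=\sum_{r>u}\sigma_r R$, and at step $u$ arranges (via the technical lemma in \S5.2) that the $u$-th row, \emph{after projecting} $x_{u+1},\dots,x_n\mapsto1$, has entries in $\sigma_u^2\bar H_m$; only then does a single $ISL_{n-1,u}(\sigma_u H_m)$ factor suffice to kill the projected image. Your row-clearing picture conflates the row index being cleared with the index $i$ of $ISL_{n-1,i}$: recall that $IGL_{n-1,i}$ consists of matrices whose $i$-th row is \emph{zero}, so the operations that clear row $i$ (which must hit row $i$) cannot themselves lie in $ISL_{n-1,i}$. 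The paper avoids this by working with the variable-by-variable filtration and the column-type matrices $\delta_v$ of \S\ref{sub:Finishing}, whose construction again relies essentially on the elements of \S\ref{sec:elementary}. The role of $n\ge4$ also enters there (Propositions~\ref{prop:type 1.2} and \ref{prop:type 2} need a fourth index to commute against), not only at the Suslin $SL_{n-1}=E_{n-1}$ step you flag.
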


Observe that it follows that when $n\geq4$, then for every $m\in\mathbb{N}$
\begin{eqnarray*}
IG_{n,m^{4}} & \subseteq & IA_{n}^{m^{2}}\cdot\prod_{i=1}^{n}ISL_{n-1,i}\left(\sigma_{i}H_{n,m^{2}}\right)\\
 & \subseteq & IA_{n}^{m}\cdot\prod_{i=1}^{n}ISL_{n-1,i}\left(\sigma_{i}H_{n,m^{2}}\right)\\
 & \subseteq & IA_{n}^{m}\cdot\prod_{i=1}^{n}ISL_{n-1,i}\left(H_{n,m^{2}}\right).
\end{eqnarray*}
The following Lemma is proved in \cite{key-14}, using classical results
from Algebraic K-theory (Lemma 7.1 in \cite{key-14}): 
\begin{lem}
\label{thm:stage 2}For every $n\geq4$, $1\leq i\leq n$ and $m\in\mathbb{N}$
one has
\[
IE_{n-1,i}\left(H_{n,m^{2}}\right)\subseteq IA_{n}^{m}.
\]
\end{lem}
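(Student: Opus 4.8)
The plan is to realize a generating set of $IE_{n-1,i}\!\left(H_{n,m^{2}}\right)$ by explicit products of $m$-th powers of automorphisms lying in $IA\!\left(\Phi_{n}\right)$, the whole argument being driven by the hypothesis $n-1\geq 3$. First I would use the classical structure theory of relative elementary subgroups in rank $d=n-1\geq3$ — economical generation of $E_{d}\!\left(R,\mathfrak{a}\right)$ together with Vaserstein's commutator identity $E_{d}\!\left(R,\mathfrak{a}\mathfrak{b}\right)=\bigl[E_{d}\!\left(R,\mathfrak{a}\right),E_{d}\!\left(R,\mathfrak{b}\right)\bigr]$ — combined with the ideal inclusion $H_{n,m^{2}}\subseteq H_{n,m}^{2}$, to reduce the problem to the following: show that each commutator $\bigl[I_{d}+aE_{j,k},\,I_{d}+bE_{k,l}\bigr]=I_{d}+ab\,E_{j,l}$ with $a,b\in H_{n,m}$ (and a bounded family of their $E_{d}\!\left(R_{n}\right)$-conjugates) lies in $IA_{n}^{m}$ whenever it lies in $IGL_{n-1,i}$.

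The inclusion $H_{n,m^{2}}\subseteq H_{n,m}^{2}$ is elementary: $m^{2}=m\cdot m$ with $m\in H_{n,m}$, and $x_{t}^{m^{2}}-1=\bigl(x_{t}^{m}-1\bigr)\bigl(\sum_{s=0}^{m-1}x_{t}^{ms}\bigr)$, where both factors lie in $H_{n,m}$ — the second because it maps to $m=0$ in $R_{n}/H_{n,m}=\mathbb{Z}_{m}[\mathbb{Z}_{m}^{n}]$. Moreover every element of $H_{n,m}$ is a sum of terms of the two special forms $m\cdot\left(\ast\right)$ and $\bigl(x_{t}^{m}-1\bigr)\cdot\left(\ast\right)$, and this is exactly what puts $m$-th powers at our disposal: $I_{d}+mr\,E_{j,k}=\bigl(I_{d}+rE_{j,k}\bigr)^{m}$ since $E_{j,k}^{2}=0$, and — here the computations of $\varoint\ref{sec:elementary}$ enter — the partial conjugation automorphism $\gamma_{jk}\colon x_{j}\mapsto x_{k}^{-1}x_{j}x_{k}$, fixing the remaining generators, satisfies $\gamma_{jk}^{m}\colon x_{j}\mapsto x_{k}^{-m}x_{j}x_{k}^{m}$, whose matrix in $GL_{n}\!\left(R_{n}\right)$ has entries built from the factor $x_{k}^{m}-1\in H_{n,m}$. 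Expanding $a,b\in H_{n,m}$ into such terms and inserting them into the commutator identity thus produces the $H_{n,m}^{2}$-elementary data from elements of $IA_{n}^{m}$ — up to the corrections handled next.

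The genuinely delicate part, and what I expect to be the main obstacle, is the bookkeeping required to stay inside $IGL_{n-1,i}$. None of the individual $m$-th powers above already sits in $IGL_{n-1,i}$: $\gamma_{jk}^{m}$ carries a nonzero diagonal entry in position $\left(j,j\right)$ and an off-diagonal entry $x_{k}^{-m}\left(x_{j}-1\right)$ that need not be divisible by $\sigma_{i}$, and the ambient elementary matrices generally have a nonzero $i$-th row. One must therefore assemble precisely the right ordered products of these $m$-th powers and commutators, in which the unwanted contributions cancel — matching the decomposition of $h\in H_{n,m^{2}}$ against the image of $\sigma_{i}R_{n}$ under the identification $IGL_{n-1,i}\cong GL_{n-1}\!\left(R_{n},\sigma_{i}R_{n}\right)$ of Proposition \ref{prop:iso} — while checking both that the products obtained already exhaust $IE_{n-1,i}\!\left(H_{n,m^{2}}\right)$ and that they visibly lie in the normal subgroup $IA_{n}^{m}$. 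Getting this cancellation to work cleanly — and confirming that it is the square $m^{2}$, not $m$ itself, that one layer of $m$-th powers plus commutators can reach — is the technical core; the material of the first two paragraphs is the surrounding scaffolding.
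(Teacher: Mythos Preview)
This lemma is not proved in the present paper; it is quoted from \cite{key-14} (Lemma~7.1 there), with the only indication being that the argument uses ``classical results from Algebraic K-theory''. So there is no detailed proof here to compare your sketch against, and your K-theoretic outline --- the inclusion $H_{n,m^{2}}\subseteq H_{n,m}^{2}$, the commutator formula $E_{d}(R,\mathfrak{a}\mathfrak{b})=[E_{d}(R,\mathfrak{a}),E_{d}(R,\mathfrak{b})]$ for $d=n-1\geq 3$, and the realization of $H_{n,m}$-entries through $m$-th powers such as $(I+rE_{j,k})^{m}$ and $\gamma_{jk}^{m}$ --- is certainly in the indicated spirit, and the individual ingredients you state are correct.

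That said, what you have written is explicitly a plan: you name the ``technical core'' (the bookkeeping) and leave it undone. One framing issue is worth correcting before you try to fill it in. You describe the obstacle as having to ``stay inside $IGL_{n-1,i}$'', but this is not the constraint: $IA_{n}^{m}=\langle IA(\Phi_{n})^{m}\rangle$ lives in $IA(\Phi_{n})$, and the factors of your decomposition need only be $m$-th powers in $IA(\Phi_{n})$, not elements of $IGL_{n-1,i}$. The genuine obstruction runs the other way. Your Vaserstein decomposition takes place in $GL_{n-1}(R_{n})$, and an elementary factor $I_{n-1}+aE_{j,k}$ with $a\in H_{n,m}$ but $a\notin\sigma_{i}R_{n}$ lies outside $GL_{n-1}(R_{n},\sigma_{i}R_{n})$; under the isomorphism of Proposition~\ref{prop:iso} it therefore has \emph{no} preimage in $IGL_{n-1,i}\subseteq IA(\Phi_{n})$ --- it does not correspond to any automorphism of $\Phi_{n}$ at all, so one cannot ``take its $m$-th power in $IA(\Phi_{n})$''. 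What the proof in \cite{key-14} must supply --- and what your sketch does not --- is, for a generating family of $E_{n-1}(R_{n},H_{n,m^{2}})$, explicit $n\times n$ elements of $IA_{n}^{m}$ whose $(n-1)\times(n-1)$ minors reproduce those generators, with the extra $i$-th row and column contributions cancelling in the product. Until that lifting is actually written down, the proposal is a strategy rather than a proof.
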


Let us now quote the following proposition (see \cite{key-14}, Corollary
2.3):
\begin{prop}
\label{cor:important}Let $R$ be a commutative ring, $H\vartriangleleft R$
ideal of finite index and $d\geq3$. Assume also that $E_{d}\left(R\right)=SL_{d}\left(R\right)$.
Then:
\[
SK_{1}\left(R,H;d\right)=SL_{d}\left(R,H\right)/E{}_{d}\left(R,H\right)
\]
is a finite group which is central in $GL_{d}\left(R\right)/E{}_{d}\left(R,H\right)$.
\end{prop}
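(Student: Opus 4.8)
The plan is to obtain both assertions from the standard structure theory of the relative elementary subgroups in the stable range $d\geq 3$, with the finiteness of $R/H$ entering only at the very end. Throughout write $GL_d(R,H)=\ker(GL_d(R)\to GL_d(R/H))$, so that $SL_d(R,H)=SL_d(R)\cap GL_d(R,H)$.

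For the centrality I would start from the relative commutator formula, valid for $d\geq 3$ over any commutative ring: $E_d(R,H)$ is normal in $GL_d(R)$ and $[E_d(R),GL_d(R,H)]=E_d(R,H)$ (Suslin, Vaserstein; see also Bass, \emph{Algebraic $K$-theory}). In particular $[E_d(R),SL_d(R,H)]\subseteq E_d(R,H)$, so the image of $SL_d(R,H)$ in $GL_d(R)/E_d(R,H)$ is centralised by the image of $E_d(R)$; hence $SK_1(R,H;d)$ is central in $E_d(R)/E_d(R,H)$, and in particular abelian. To pass from ``central in $E_d(R)/E_d(R,H)$'' to ``central in $GL_d(R)/E_d(R,H)$'' I would bring in the hypothesis $SL_d(R)=E_d(R)$: the map $\det\colon GL_d(R)\to R^{\times}$ is split by $u\mapsto\mathrm{diag}(u,1,\dots,1)$ with kernel $SL_d(R)=E_d(R)$, so $GL_d(R)=E_d(R)\cdot\{\mathrm{diag}(u,1,\dots,1):u\in R^{\times}\}$, and it suffices that each $\mathrm{diag}(u,1,\dots,1)$ act trivially on $SK_1(R,H;d)$. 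For this I would invoke the Mennicke symbol description of $SK_1(R,H;d)$ for $d\geq 3$: every class is represented by a matrix having a block $\bigl(\begin{smallmatrix}a&b\\ c&e\end{smallmatrix}\bigr)$ in the first two coordinates and the identity elsewhere, with class the symbol $\bigl[\begin{smallmatrix}b\\ a\end{smallmatrix}\bigr]$; conjugating by $\mathrm{diag}(u,1,\dots,1)$ (which normalises $E_d(R,H)$, so is well defined on classes) replaces $b,c$ by $ub,u^{-1}c$, i.e.\ multiplies the symbol by $\bigl[\begin{smallmatrix}u\\ a\end{smallmatrix}\bigr]$, which is trivial because $u$ is a unit. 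Hence $SK_1(R,H;d)$ is central in $GL_d(R)/E_d(R,H)$.

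For the finiteness I would use $SL_d(R)=E_d(R)$ again: then $SL_d(R,H)=E_d(R)\cap GL_d(R,H)$, so $SK_1(R,H;d)=SL_d(R,H)/E_d(R,H)$ is exactly the kernel of the natural surjection $E_d(R)/E_d(R,H)\twoheadrightarrow E_d(R/H)$ (surjective since elementary generators lift along $R\to R/H$). By the centrality step this kernel is central in $E_d(R)/E_d(R,H)$, so $E_d(R)/E_d(R,H)$ is a \emph{central} extension of $E_d(R/H)$; and it is \emph{perfect}, since for $d\geq 3$ each generator $\overline{I_d+rE_{i,j}}$ equals $[\overline{I_d+rE_{i,k}},\overline{I_d+E_{k,j}}]$ with $k\notin\{i,j\}$. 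Therefore $E_d(R)/E_d(R,H)$ is a quotient of the universal central extension $\widetilde{E_d(R/H)}$ of the perfect group $E_d(R/H)$. Finally $R/H$ is finite, so $E_d(R/H)\leq GL_d(R/H)$ is finite; a finite perfect group has finite Schur multiplier, so $\widetilde{E_d(R/H)}$ is finite, whence $E_d(R)/E_d(R,H)$ — and a fortiori its subgroup $SK_1(R,H;d)$ — is finite. This completes the proof.

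The load‑bearing external inputs are the two stable‑range ($d\geq 3$) facts: the normality of $E_d(R,H)$ in $GL_d(R)$ together with the relative commutator formula, and the Mennicke symbol presentation of $SK_1(R,H;d)$ (including the vanishing of a symbol with a unit entry). Within the argument itself the only genuinely delicate passage is the second half of the centrality step — promoting centrality from $E_d(R)$ to all of $GL_d(R)$ — and this is precisely where the hypothesis $SL_d(R)=E_d(R)$ is indispensable: without it one would also have to control the action of a possibly infinite $SK_1(R;d)$, and then neither finiteness nor centrality need hold.
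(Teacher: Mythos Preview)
The paper does not actually prove this proposition; it is quoted from \cite{key-14} (Corollary~2.3 there), so there is no in-paper argument to compare your attempt against. Your approach is self-contained and largely sound: the finiteness via the universal central extension of the finite perfect group $E_d(R/H)$ is correct and clean, and the centrality of $SK_1(R,H;d)$ inside $E_d(R)/E_d(R,H)=SL_d(R)/E_d(R,H)$ follows directly from the relative commutator formula $[E_d(R),GL_d(R,H)]=E_d(R,H)$, exactly as you say.

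The one step that does not go through as written is the promotion from centrality in $SL_d(R)/E_d(R,H)$ to centrality in $GL_d(R)/E_d(R,H)$. Your Mennicke-symbol argument rests on the claim that every class in $SK_1(R,H;d)$ is represented by a $2\times 2$ block in the upper-left corner. That is a surjective-stability statement and is \emph{not} available for an arbitrary commutative ring at fixed $d\geq 3$; it typically requires $d$ to exceed the stable range of $R$. In the paper's own application $R=\mathbb{Z}[x_1^{\pm1},\dots,x_n^{\pm1}]$ and $d=n-1$, so the stable range of $R$ may well exceed $d$ and the hypothesis is not at hand. There is also a secondary issue: even granting the Mennicke presentation, the factorisation $[ub/a]=[u/a]\cdot[b/a]$ is not well-posed for the \emph{relative} symbol, since the numerator must lie in $H$ and $u\notin H$ in general. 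What does work is that for a $2\times2$ block $N$ one has $\mathrm{diag}(u,1,\dots,1)\,N\,\mathrm{diag}(u,1,\dots,1)^{-1}=D^{-1}ND$ with $D=\mathrm{diag}(u^{-1},1,u,1,\dots,1)\in E_d(R)$, so $\phi_u([N])=[N]$; but this still leaves the generation question open. To close the gap you would need either an unconditional level-$d$ argument for $[GL_d(R),SL_d(R,H)]\subseteq E_d(R,H)$ when $d\geq 3$, or simply to defer this half of the statement to the cited reference as the paper itself does.
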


Now, according to Proposition \ref{cor:important} and the fact that
$E_{d}\left(R_{n}\right)=SL_{d}\left(R_{n}\right)$ for every $d\geq3$
\cite{key-33}, we obtain that for every $n\geq4$
\[
SL_{n-1}\left(R_{n},H_{n,m}\right)/E{}_{n-1}\left(R_{n},H_{n,m}\right)=SK_{1}\left(R,H_{n,m};n-1\right)
\]
is a finite group. Thus
\[
ISL_{n-1,i}\left(H_{n,m}\right)/IE_{n-1,i}\left(H_{n,m}\right)\leq SL_{n-1}\left(R_{n},H_{n,m}\right)/E{}_{n-1}\left(R_{n},H_{n,m}\right)
\]
is also a finite group. Hence, the conclusion from Lemmas \ref{thm:stage 1}
and \ref{thm:stage 2} is that for every $m\in\mathbb{N}$, one can
cover $IG_{n,m^{4}}$ with finite number of cosets of $IA_{n}^{m}$.
As $IG_{n,m^{4}}$ is obviously a finite index subgroup of $IA\left(\Phi_{n}\right)$
we deduce that $IA_{n}^{m}$ is also a finite index subgroup of $IA\left(\Phi_{n}\right)$.
Therefore, as every normal subgroup of $IA\left(\Phi_{n}\right)$
of index $m$ cotains $IA_{n}^{m}$ we deduce that one can write explicitely
$\widehat{IA\left(\Phi_{n}\right)}=\underleftarrow{\lim}\left(IA\left(\Phi_{n}\right)/IA_{n}^{m}\right)$.
On the other hand, it is proven in \cite{key-6} that $\hat{\Phi}_{n}=\underleftarrow{\lim}\Phi_{n,m}$,
and thus:
\begin{cor}
\label{cor:kernel}For every $n\geq4$ 
\begin{eqnarray*}
C\left(IA\left(\Phi_{n}\right),\Phi_{n}\right) & = & \ker\left(\underleftarrow{\lim}\left(IA\left(\Phi_{n}\right)/IA_{n}^{m}\right)\to\underleftarrow{\lim}\left(IA\left(\Phi_{n}\right)/IG_{n,m}\right)\right)\\
 & = & \ker\left(\underleftarrow{\lim}\left(IA\left(\Phi_{n}\right)/IA_{n}^{m}\right)\to\underleftarrow{\lim}\left(IA\left(\Phi_{n}\right)/IG_{n,m^{4}}\right)\right)\\
 & = & \underleftarrow{\lim}\left(IA_{n}^{m}\cdot IG_{n,m^{4}}/IA_{n}^{m}\right).
\end{eqnarray*}
\end{cor}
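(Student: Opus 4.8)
The plan is to obtain the corollary from the finite-index statement just established, the description $\hat{\Phi}_n=\underleftarrow{\lim}_m\Phi_{n,m}$ of \cite{key-6}, and routine manipulation of inverse limits of finite groups. First I would pin down the two completions. Since $IA_n^m$ is a verbal (hence normal) subgroup which has just been shown to have finite index in $IA(\Phi_n)$, and since a finite group of order $m$ has exponent dividing $m$, every normal subgroup of $IA(\Phi_n)$ of index $m$ contains $IA_n^m$; hence $\{IA_n^m\}_m$ is cofinal among the finite-index normal subgroups and $\widehat{IA(\Phi_n)}=\underleftarrow{\lim}_m IA(\Phi_n)/IA_n^m$. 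On the group side, by \cite{key-6} the subgroups $M_{n,m}$ are cofinal among the finite-index normal subgroups of $\Phi_n$, so $Aut(\hat{\Phi}_n)=\underleftarrow{\lim}_m Aut(\Phi_{n,m})$ and $C(IA(\Phi_n),\Phi_n)$, which by definition is $\ker(\widehat{IA(\Phi_n)}\to Aut(\hat{\Phi}_n))$, equals $\ker(\widehat{IA(\Phi_n)}\to\underleftarrow{\lim}_m IA(\Phi_n)/IG_{n,m})$; together with the first identification this is the first displayed equality. For the second equality I would observe that $k\mid k^4$ forces $M_{n,k^4}\le M_{n,k}$, hence $IG_{n,k^4}\le IG_{n,k}$, so $\{IG_{n,m^4}\}_m$ is cofinal in $\{IG_{n,m}\}_m$ and the two congruence completions agree.

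For the last equality I would work inside the profinite group $\widehat{IA(\Phi_n)}$. The kernel of the continuous extension $\widehat{IA(\Phi_n)}\to IA(\Phi_n)/IG_{n,m^4}$ of the quotient map is the closure $\overline{IG_{n,m^4}}$ of $IG_{n,m^4}$, so $C(IA(\Phi_n),\Phi_n)=\bigcap_m\overline{IG_{n,m^4}}$; and since $\widehat{IA(\Phi_n)}=\underleftarrow{\lim}_j IA(\Phi_n)/IA_n^j$, the closure of a subgroup $H$ is $\underleftarrow{\lim}_j H\cdot IA_n^j/IA_n^j$, in particular $\overline{IG_{n,m^4}}=\underleftarrow{\lim}_j IG_{n,m^4}\cdot IA_n^j/IA_n^j$. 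As the $\overline{IG_{n,m^4}}$ form a decreasing family of closed subgroups, this intersection equals the limit of the resulting double system indexed by $\mathbb{N}\times\mathbb{N}$ (divisibility in each coordinate); its diagonal is cofinal (given $(m,j)$, take $\ell=\mathrm{lcm}(m,j)$, so that $(\ell,\ell)\ge(m,j)$), so the double limit collapses to $\underleftarrow{\lim}_m IG_{n,m^4}\cdot IA_n^m/IA_n^m=\underleftarrow{\lim}_m IA_n^m\cdot IG_{n,m^4}/IA_n^m$, which is the asserted expression.

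Within the corollary nothing is genuinely hard; the only step that needs care is this collapse of the iterated inverse limit to its diagonal, a standard cofinality argument that is legitimate precisely because every term is a finite group. The substantive difficulty lies upstream, in the Main Lemma \ref{thm:stage 1} and in the deduction from it (via Lemma \ref{thm:stage 2}, Proposition \ref{cor:important}, and the finiteness of $SK_1(R_n,H_{n,m};n-1)$) that $IA_n^m$ is of finite index in $IA(\Phi_n)$.
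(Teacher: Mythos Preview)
Your proposal is correct and follows exactly the approach the paper takes: the paper states the corollary as an immediate consequence of (i) the just-proved finite index of $IA_n^m$, yielding $\widehat{IA(\Phi_n)}=\underleftarrow{\lim}_m IA(\Phi_n)/IA_n^m$, and (ii) the result from \cite{key-6} that $\hat{\Phi}_n=\underleftarrow{\lim}_m\Phi_{n,m}$, and your write-up simply fills in the routine cofinality and inverse-limit details that the paper leaves implicit. One small imprecision: you write $Aut(\hat{\Phi}_n)=\underleftarrow{\lim}_m Aut(\Phi_{n,m})$, but the paper only asserts (and only needs) the inclusion $\underleftarrow{\lim}_m IA(\Phi_n)/IG_{n,m}\hookrightarrow Aut(\hat{\Phi}_n)$, which already suffices to identify the two kernels.
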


Now, Proposition \ref{cor:important} gives us also that for every
$m\in\mathbb{N}$ and $n\geq4$, the subgroup $SK_{1}\left(R_{n},H_{n,m};n-1\right)$
is central in $GL_{n-1}\left(R_{n}\right)/E{}_{n-1}\left(R_{n},H_{n,m}\right)$.
This fact is used in Section 5 of \cite{key-14} to prove that if
we define
\[
IA_{n,m}=\cap\left\{ N\vartriangleleft IA\left(\Phi_{n}\right)\,|\,[IA\left(\Phi_{n}\right):N]\,|\,m\right\} 
\]
then for every $n\geq4$, $m\in\mathbb{N}$ and $1\leq i\leq n$ the
subgroup
\[
IA_{n,m}\cdot ISL_{n-1,i}\left(\sigma_{i}H_{n,m^{2}}\right)/IA_{n,m}
\]
is central in $IA\left(\Phi_{n}\right)/IA_{n,m}$. Completely similar
arguments yield the following result\footnote{The only property of $IA_{n,m}$ used in Chapter 5 of \cite{key-14}
is that $IA_{n}^{m}\subseteq IA_{n,m}$.}:
\begin{prop}
\label{thm:stage 3}For every $n\geq4$, $m\in\mathbb{N}$ and $1\leq i\leq n$
the subgroup
\[
IA_{n}^{m}\cdot ISL_{n-1,i}\left(\sigma_{i}H_{n,m^{2}}\right)/IA_{n}^{m}
\]
is central in $IA\left(\Phi_{n}\right)/IA_{n}^{m}$.
\end{prop}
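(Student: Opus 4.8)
The plan is to exhibit $ISL_{n-1,i}\left(\sigma_{i}H_{n,m^{2}}\right)$ modulo $IA_{n}^{m}$ as sitting inside an $SK_{1}$-group that Proposition~\ref{cor:important} already knows to be central in the ambient linear quotient, and then to transport that centrality back to $IA\left(\Phi_{n}\right)/IA_{n}^{m}$. The first and crucial point is to locate $ISL_{n-1,i}\left(\sigma_{i}H_{n,m^{2}}\right)$ inside $GL_{n}\left(R_{n}\right)$ under the Magnus identification. Suppose $I_{n}+A\in ISL_{n-1,i}\left(\sigma_{i}H_{n,m^{2}}\right)$: then the $i$-th row of $A$ vanishes, the $\left(i,i\right)$-minor lies in $SL_{n-1}\left(R_{n},\sigma_{i}H_{n,m^{2}}\right)$, so its entries — i.e. the $a_{k,l}$ with $k,l\neq i$ — lie in $\sigma_{i}H_{n,m^{2}}\subseteq H_{n,m^{2}}$, and the column-$i$ entries satisfy $a_{k,i}\sigma_{i}=-\sum_{l\neq i}a_{k,l}\sigma_{l}$ by the defining relation $A\vec{\sigma}=\vec{0}$ of $IA\left(\Phi_{n}\right)$. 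The right-hand side lies in $\sigma_{i}H_{n,m^{2}}$, and since $R_{n}$ is an integral domain with $\sigma_{i}\neq 0$ we may cancel $\sigma_{i}$ to conclude $a_{k,i}\in H_{n,m^{2}}$. Together with $\det\left(I_{n}+A\right)=\det\left(I_{n-1}+A_{i,i}\right)=1$ this gives
\[
ISL_{n-1,i}\left(\sigma_{i}H_{n,m^{2}}\right)\ \subseteq\ SL_{n}\left(R_{n},H_{n,m^{2}}\right)\cap IA\left(\Phi_{n}\right).
\]
It is exactly here that the factor $\sigma_{i}$ is essential: without it the column-$i$ entries need not return to $H_{n,m^{2}}$, because $x_{i}-1$ is a zero-divisor modulo $H_{n,m^{2}}$, and indeed the proposition is false for the larger group $ISL_{n-1,i}\left(H_{n,m^{2}}\right)$.

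Now $H_{n,m^{2}}=\ker\left(R_{n}\to\mathbb{Z}_{m^{2}}[\mathbb{Z}_{m^{2}}^{n}]\right)$ is an ideal of finite index, and $E_{n}\left(R_{n}\right)=SL_{n}\left(R_{n}\right)$ for $n\geq3$ by \cite{key-33}, so Proposition~\ref{cor:important} applied with $R=R_{n}$, $H=H_{n,m^{2}}$, $d=n$ says that $SK_{1}\left(R_{n},H_{n,m^{2}};n\right)=SL_{n}\left(R_{n},H_{n,m^{2}}\right)/E_{n}\left(R_{n},H_{n,m^{2}}\right)$ is finite and central in $GL_{n}\left(R_{n}\right)/E_{n}\left(R_{n},H_{n,m^{2}}\right)$. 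Fix $g\in IA\left(\Phi_{n}\right)\subseteq GL_{n}\left(R_{n}\right)$ and $x\in ISL_{n-1,i}\left(\sigma_{i}H_{n,m^{2}}\right)$; by the previous paragraph $x\in SL_{n}\left(R_{n},H_{n,m^{2}}\right)$, so its image in $GL_{n}\left(R_{n}\right)/E_{n}\left(R_{n},H_{n,m^{2}}\right)$ is central and hence commutes with the image of $g$, whence
\[
[g,x]\ \in\ E_{n}\left(R_{n},H_{n,m^{2}}\right)\cap IA\left(\Phi_{n}\right).
\]
(One could instead try to run this at level $n-1$ with the ideal $H_{n,m}$, as the remark preceding the proposition suggests, but since $IA\left(\Phi_{n}\right)$ sits naturally in $GL_{n}\left(R_{n}\right)$ and not in $GL_{n-1}\left(R_{n}\right)$, relating $IA\left(\Phi_{n}\right)$-conjugation to conjugation at level $n-1$ is murkier; working at level $n$ keeps the conjugation literal.)

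It remains to prove that $E_{n}\left(R_{n},H_{n,m^{2}}\right)\cap IA\left(\Phi_{n}\right)\subseteq IA_{n}^{m}$; granting this, $[g,x]\in IA_{n}^{m}$ for every $g\in IA\left(\Phi_{n}\right)$ and every $x\in ISL_{n-1,i}\left(\sigma_{i}H_{n,m^{2}}\right)$, which is precisely the assertion of the proposition. For this I would use the block-elementary $IA$-automorphisms $v_{j,k,l}\left(r\right)=I_{n}+r\left(\sigma_{l}E_{j,k}-\sigma_{k}E_{j,l}\right)$ for pairwise distinct indices $j,k,l$ and $r\in H_{n,m^{2}}$: a direct check shows $v_{j,k,l}\left(r\right)\vec{\sigma}=\vec{\sigma}$, that $v_{j,k,l}\left(r\right)=\left(I_{n}+r\sigma_{l}E_{j,k}\right)\left(I_{n}-r\sigma_{k}E_{j,l}\right)$ is a product of two relative elementary matrices and hence lies in $E_{n}\left(R_{n},H_{n,m^{2}}\right)$, and that its $\left(k,k\right)$-minor is a single elementary matrix with entry in $H_{n,m^{2}}$, so that $v_{j,k,l}\left(r\right)\in IGL_{n-1,k}\cap E_{n-1}\left(R_{n},H_{n,m^{2}}\right)=IE_{n-1,k}\left(H_{n,m^{2}}\right)\subseteq IA_{n}^{m}$ by Lemma~\ref{thm:stage 2}. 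Since $IA_{n}^{m}=\left\langle IA\left(\Phi_{n}\right)^{m}\right\rangle$ is normal in $IA\left(\Phi_{n}\right)$, it then suffices to know that $E_{n}\left(R_{n},H_{n,m^{2}}\right)\cap IA\left(\Phi_{n}\right)$ is generated, as a normal subgroup of $IA\left(\Phi_{n}\right)$, by these $v_{j,k,l}\left(r\right)$. I expect this generation statement to be the main obstacle: it is the analogue, for the stabilizer $IA\left(\Phi_{n}\right)=\left\{ M\in GL_{n}\left(R_{n}\right)\,|\,M\vec{\sigma}=\vec{\sigma}\right\}$, of the classical description of the relative elementary subgroup $E_{d}\left(R,H\right)$ by relative elementary matrices, and carrying it out requires the standard but laborious commutator manipulations together with the hypothesis $n\geq4$ (enough columns to pivot in). Everything else is bookkeeping with the identifications recalled in Section~\ref{sec:structure}.
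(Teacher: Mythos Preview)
The paper does not prove this proposition here; it cites \cite{key-14} and only hints that the argument uses the centrality of $SK_{1}(R_{n},H_{n,m};n-1)$ in $GL_{n-1}(R_{n})/E_{n-1}(R_{n},H_{n,m})$, i.e.\ it works at level $n-1$. Your route via $SK_{1}$ at level $n$ is therefore genuinely different from what the paper indicates, and your observation that at level $n$ the $IA(\Phi_{n})$-conjugation is literal is a real advantage of your setup.

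The reduction is correct up to and including $[g,x]\in E_{n}(R_{n},H_{n,m^{2}})\cap IA(\Phi_{n})$, and the verification that $v_{j,k,l}(r)\in IE_{n-1,k}(H_{n,m^{2}})\subseteq IA_{n}^{m}$ is fine. The gap is the last step, and I think you are underestimating it. You want $E_{n}(R_{n},H_{n,m^{2}})\cap IA(\Phi_{n})\subseteq IA_{n}^{m}$ via normal generation of the left-hand side by the $v_{j,k,l}(r)$. But this is not an ``analogue'' of the classical description of $E_{d}(R,H)$: there $E_{d}(R,H)$ is \emph{defined} as a normal closure, whereas here you are intersecting a normally-generated subgroup of $GL_{n}(R_{n})$ with the stabilizer $IA(\Phi_{n})$, and there is no a priori reason the intersection should itself be a normal closure (in $IA(\Phi_{n})$) of anything explicit. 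Concretely, an element of $E_{n}(R_{n},H_{n,m^{2}})\cap IA(\Phi_{n})$ is a product of $E_{n}(R_{n})$-conjugates of matrices $I_{n}+hE_{j,k}$, none of which individually lie in $IA(\Phi_{n})$; rewriting such a product as a product of $IA(\Phi_{n})$-conjugates of the $v_{j,k,l}$'s is a structural statement about $IA(\Phi_{n})$ of the same order of difficulty as the proposition itself. The ``standard commutator manipulations'' you invoke live in $E_{n}(R_{n})$, not in $IA(\Phi_{n})$, and do not transfer automatically.

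The paper's $(n-1)$-level approach sidesteps this intersection problem by staying inside $IGL_{n-1,i}\cong GL_{n-1}(R_{n},\sigma_{i}R_{n})$, where Lemma~\ref{thm:stage 2} already supplies $IE_{n-1,i}(H_{n,m^{2}})\subseteq IA_{n}^{m}$ directly; what it must then handle is precisely the relation between $IA(\Phi_{n})$-conjugation and $GL_{n-1}$-conjugation that you call ``murkier'', and \cite{key-14} evidently resolves this (the surjections $\rho_{i}$ alluded to in Section~\ref{sec:Inferences} are the natural tool). Either route requires genuine input beyond Proposition~\ref{cor:important} and Lemma~\ref{thm:stage 2}; your proposal correctly isolates where that input is needed but does not supply it.
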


\begin{cor}
For every $n\geq4$ and $m\in\mathbb{N}$ the elements of the set
\[
IA_{n}^{m}\cdot\prod_{i=1}^{n}ISL_{n-1,i}\left(\sigma_{i}H_{n,m^{2}}\right)/IA_{n}^{m}
\]
belong to the center of $IA\left(\Phi_{n}\right)/IA_{n}^{m}$.
\end{cor}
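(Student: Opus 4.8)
The corollary is a purely formal consequence of Proposition~\ref{thm:stage 3}: once centrality of each single factor $IA_n^m\cdot ISL_{n-1,i}\bigl(\sigma_i H_{n,m^2}\bigr)/IA_n^m$ is known, passing to the product uses only that the center of a group is a subgroup. So the plan is short.

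First I would record that $IA_n^m=\left\langle IA\left(\Phi_n\right)^m\right\rangle$ is a characteristic, hence normal, subgroup of $IA\left(\Phi_n\right)$, so that $Q:=IA\left(\Phi_n\right)/IA_n^m$ is a genuine group with a well-defined center $Z(Q)$, and I would write $\pi\colon IA\left(\Phi_n\right)\to Q$ for the canonical projection. By Proposition~\ref{thm:stage 3}, for each $1\le i\le n$ the image $\pi\bigl(ISL_{n-1,i}\left(\sigma_i H_{n,m^2}\right)\bigr)$ is contained in $Z(Q)$.

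Next I would take an arbitrary element $x$ of the set $IA_n^m\cdot\prod_{i=1}^n ISL_{n-1,i}\left(\sigma_i H_{n,m^2}\right)$ and write it as $x=w\,g_1\cdots g_n$ with $w\in IA_n^m$ and $g_i\in ISL_{n-1,i}\left(\sigma_i H_{n,m^2}\right)$. Applying $\pi$ and using $\pi(w)=1$ gives $\pi(x)=\pi(g_1)\cdots\pi(g_n)$, a product of elements of $Z(Q)$; since $Z(Q)$ is a subgroup, $\pi(x)\in Z(Q)$. As $x$ was arbitrary, every element of $IA_n^m\cdot\prod_{i=1}^n ISL_{n-1,i}\left(\sigma_i H_{n,m^2}\right)/IA_n^m$ lies in the center of $IA\left(\Phi_n\right)/IA_n^m$, which is exactly the claim.

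There is essentially no obstacle here: all the substantive work was already carried out in proving Proposition~\ref{thm:stage 3} for a single index, and the only extra input is the elementary fact that a product of central elements is central (equivalently, that $Z(Q)$ is closed under multiplication). In particular the order in which the factors $ISL_{n-1,i}\left(\sigma_i H_{n,m^2}\right)$ are listed is irrelevant for this statement.
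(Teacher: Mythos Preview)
Your argument is correct and is exactly the approach the paper takes: the corollary is stated as an immediate consequence of Proposition~\ref{thm:stage 3}, with no further proof given, since a product of central elements is central. Your write-up simply spells out this elementary step.
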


The conclusion from the latter corollary is that for every $n\geq4$
and $m\in\mathbb{N}$, the set 
\[
IA_{n}^{m}\cdot\prod_{i=1}^{n}ISL_{n-1,i}\left(\sigma_{i}H_{n,m^{2}}\right)/IA_{n}^{m}
\]
is an $abelian$ $group$ which contained in the center of $IA\left(\Phi_{n}\right)/IA_{n}^{m}$.
In particular, $IA_{n}^{m}\cdot IG_{n,m^{4}}/IA_{n}^{m}$ is contained
in the center of $IA\left(\Phi_{n}\right)/IA_{n}^{m}$, and thus,
by Corollary \ref{cor:kernel}, $C\left(IA\left(\Phi_{n}\right),\Phi_{n}\right)$
is in the center of $\widehat{IA\left(\Phi_{n}\right)}$. This finishes,
up to the proof of Lemma \ref{thm:stage 1}, the proof of Theorem
\ref{thm:main}.

So it remains to prove Lemma \ref{thm:stage 1}. But before we start
to prove this lemma, we need to present some elements of $IA_{n}^{m}$.
We will do this in the following section.

\section{\label{sec:elementary}Some elementary elements of $\left\langle IA\left(\Phi_{n}\right)^{m}\right\rangle $}

In this section we introduce some elements of $IA_{n}^{m}=\left\langle IA\left(\Phi_{n}\right)^{m}\right\rangle $
which are needed through the proof of Lemma \ref{thm:stage 1}. As
one can see below, we separate the elementary elements to two types.
In addition, we separate the treatment of the elements of type 1,
to two parts. We hope this separation will make the process clearer. 

Additionally to the previous notations, on the section, and also later
on, we will use the notation
\[
\mu_{r,m}=\sum_{i=0}^{m-1}x_{r}^{i}\,\,\,\,\textrm{for}\,\,\,\,1\leq r\leq n.
\]

\subsection{Elementary elements of type 1}
\begin{prop}
\label{prop:type 1.1}Let $n\geq3$, $1\leq u\leq n$ and $m\in\mathbb{N}$.
Denote by $\vec{e}_{i}$ the $i$-th row standard vector. Then, the
elements of $IA\left(\Phi_{n}\right)$ of the form (the following
notation means that the matrix is similar to the identity matrix,
except the entries in the $u$-th row)
\[
\left(\begin{array}{ccccccc}
 & I_{u-1} &  & 0 &  & 0\\
a_{u,1} & \cdots & a_{u,u-1} & 1 & a_{u,u+1} & \cdots & a_{u,n}\\
 & 0 &  & 0 &  & I_{n-u}
\end{array}\right)\leftarrow u\textrm{-th\,\,\,\,\ row}
\]
when $\left(a_{u,1},\ldots,a_{u,u-1},0,a_{u,u+1},\ldots,a_{u,n}\right)$
is a linear combination of the vectors
\begin{eqnarray*}
 & 1. & \left\{ m\left(\sigma_{i}\vec{e}_{j}-\sigma_{j}\vec{e}_{i}\right)\,|\,i,j\neq u,\,i\neq j\right\} \\
 & 2. & \left\{ \sigma_{k}\mu_{k,m}\left(\sigma_{i}\vec{e}_{j}-\sigma_{j}\vec{e}_{i}\right)\,|\,i,j,k\neq u,\,i\neq j\right\} \\
 & 3. & \left\{ \sigma_{k}\mu_{i,m}\left(\sigma_{i}\vec{e}_{j}-\sigma_{j}\vec{e}_{i}\right)\,|\,i,j,k\neq u,\,i\neq j,\,k\neq j\right\} 
\end{eqnarray*}
with coefficients in $R_{n}$, belong to $IA_{n}^{m}$.
\end{prop}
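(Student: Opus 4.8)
The plan is to produce each of the three families by explicit commutators of $m$-th powers — or, for family~1, by bare $m$-th powers. Write $g_u(v)$ for the matrix obtained from $I_n$ by adding a row vector $v$ with $v_u=0$ to its $u$-th row; these are precisely the matrices appearing in the statement. By the Koszul relation $\sigma_i\sigma_j=\sigma_j\sigma_i$, the matrix $g_u(v)$ lies in $IA\left(\Phi_n\right)$ whenever $v$ is an $R_n$-combination of the vectors $\sigma_i\vec{e}_j-\sigma_j\vec{e}_i$ with $i,j\neq u$, and one checks $g_u(v)g_u(v')=g_u(v+v')$ for such $v,v'$; hence these matrices form an abelian subgroup $T_u\leq IA\left(\Phi_n\right)$. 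Since $IA_n^m$ is normal in $IA\left(\Phi_n\right)$, the set $IA_n^m\cap T_u$ is a subgroup of $T_u$, so it is enough to show that $g_u(rv)\in IA_n^m$ for every $r\in R_n$ and every $v$ among the three listed generating vectors; arbitrary $R_n$-linear combinations then follow. Family~1 is immediate, since $g_u\!\left(r\left(\sigma_i\vec{e}_j-\sigma_j\vec{e}_i\right)\right)^{m}=g_u\!\left(mr\left(\sigma_i\vec{e}_j-\sigma_j\vec{e}_i\right)\right)\in IA_n^m$.

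For family~2 I would exploit the conjugation action of a Magnus generator. Via Fox calculus, the automorphism $\chi_{uk}$ sending $x_u\mapsto x_k^{-1}x_ux_k$ corresponds under the Magnus embedding to $I_n+B$ with $B=x_k^{-1}\left(\sigma_uE_{u,k}-\sigma_kE_{u,u}\right)$, and $B^{2}=-x_k^{-1}\sigma_kB$, so $\left(I_n+B\right)^{-1}=I_n-x_kB$. A short matrix computation then gives $\chi_{uk}^{-m}\,g_u(v)\,\chi_{uk}^{m}=g_u\!\left(x_k^{m}v\right)$ for every $v$ with $v_u=0$. Hence, for $v_0=\sigma_i\vec{e}_j-\sigma_j\vec{e}_i$ with $i,j,k\neq u$, $i\neq j$, and any $r\in R_n$,
\[
\chi_{uk}^{-m}\,g_u(rv_0)\,\chi_{uk}^{m}\,g_u(rv_0)^{-1}=g_u\!\left(\left(x_k^{m}-1\right)rv_0\right)=g_u\!\left(r\,\sigma_k\mu_{k,m}\left(\sigma_i\vec{e}_j-\sigma_j\vec{e}_i\right)\right),
\]
and this lies in $IA_n^m$ because $\chi_{uk}^{-m}\in IA_n^m$ and $IA_n^m$ is normal.

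For family~3 the factor $\mu_{i,m}$ must instead be manufactured by an $m$-th power. The matrix of $\chi_{ji}$ (for $j\neq i$) is $I_n+B'$ with $B'=x_i^{-1}\left(\sigma_jE_{j,i}-\sigma_iE_{j,j}\right)$ and $B'^{2}=\left(x_i^{-1}-1\right)B'$, so summing the binomial series gives
\[
C:=\chi_{ji}^{m}-I_n=x_i^{-m}\mu_{i,m}\left(\sigma_jE_{j,i}-\sigma_iE_{j,j}\right),\qquad C^{2}=\left(x_i^{-m}-1\right)C,\qquad \left(I_n+C\right)^{-1}=I_n-x_i^{m}C.
\]
I would then fix $i,j,k\neq u$ with $i\neq j$ and $k\neq j$, set $w=\sigma_k\vec{e}_j-\sigma_j\vec{e}_k$, and let $D=r\left(\sigma_kE_{u,j}-\sigma_jE_{u,k}\right)$ be the off-diagonal part of $g_u(rw)$. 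One checks $CD=0$ (since $i,j\neq u$), $D^{2}=0$ (since $j,k\neq u$), and $DC=r\,x_i^{-m}\mu_{i,m}\sigma_k\left(\sigma_jE_{u,i}-\sigma_iE_{u,j}\right)$ — the last identity is exactly where the hypothesis $k\neq j$ enters, killing the would-be term $E_{u,k}\cdot C$. Expanding $\left(I_n+C\right)g_u(rw)\left(I_n+C\right)^{-1}g_u(rw)^{-1}$ with these relations, all stray elementary terms cancel and the result is $g_u\!\left(r\,\sigma_k\mu_{i,m}\left(\sigma_i\vec{e}_j-\sigma_j\vec{e}_i\right)\right)$; since $I_n+C$ represents $\chi_{ji}^{m}\in IA_n^m$, this lies in $IA_n^m$. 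Assembling families~1,~2 and~3, and using that $IA_n^m\cap T_u$ is a subgroup of $T_u$, finishes the argument.

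The conceptual content is modest once the right auxiliary elements are in hand; the real work is the two commutator computations, and the step that must be handled with care is identifying which index inequalities force the parasitic elementary terms to vanish. Everything hinges on the closed forms for $\left(I_n+B\right)^{-1}$ and $\left(I_n+C\right)^{-1}$ and on the off-diagonal parts squaring to zero in the right positions, the governing restriction being precisely $k\neq j$ in family~3. One must also verify at each step that the matrices written down satisfy $A\vec{\sigma}=\vec{0}$ and hence genuinely lie in $IA\left(\Phi_n\right)$ — routine (always the Koszul relation) but not to be skipped. Composition-order conventions for the Magnus embedding are immaterial, since $IA_n^m$ contains both $\chi_{pq}^{m}$ and $\chi_{pq}^{-m}$.
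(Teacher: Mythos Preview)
Your proof is correct and follows essentially the same route as the paper: both reduce via the abelian structure of the row-$u$ subgroup to the three generating families, obtain family~1 as a bare $m$-th power, and realise families~2 and~3 as commutators of a row-$u$ element with an $m$-th power of an IA-automorphism supported on a single row. Your conjugating elements $\chi_{uk}$ and $\chi_{ji}$ are (up to inverse) exactly the matrices $I_n+\sigma_kE_{u,u}-\sigma_uE_{u,k}$ and $I_n+\sigma_iE_{j,j}-\sigma_jE_{j,i}$ that the paper writes down explicitly, so the two arguments differ only in bookkeeping; your use of the relation $B^{2}=cB$ to get closed forms for powers and inverses is a nice streamlining of what the paper verifies by direct matrix multiplication.
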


Before proving this proposition, we present some more elements of
this type. Note that for the following proposition we assume $n\geq4$:
\begin{prop}
\label{prop:type 1.2}Let $n\geq4$, $1\leq u\leq n$ and $m\in\mathbb{N}$.
Then, the elements of $IA\left(\Phi_{n}\right)$ of the form
\[
\left(\begin{array}{ccccccc}
 & I_{u-1} &  & 0 &  & 0\\
a_{u,1} & \cdots & a_{u,u-1} & 1 & a_{u,u+1} & \cdots & a_{u,n}\\
 & 0 &  & 0 &  & I_{n-u}
\end{array}\right)\leftarrow u\textrm{-th\,\,\,\,\ row}
\]
when $\left(a_{u,1},\ldots,a_{u,u-1},0,a_{u,u+1},\ldots,a_{u,n}\right)$
is a linear combination of the vectors
\begin{eqnarray*}
 & 1. & \left\{ \sigma_{u}^{2}\mu_{u,m}\left(\sigma_{i}\vec{e}_{j}-\sigma_{j}\vec{e}_{i}\right)\,|\,i,j\neq u,\,i\neq j\right\} \\
 & 2. & \left\{ \sigma_{u}\sigma_{j}\mu_{i,m}\left(\sigma_{i}\vec{e}_{j}-\sigma_{j}\vec{e}_{i}\right)\,|\,i,j\neq u,\,i\neq j\right\} 
\end{eqnarray*}
with coefficients in $R_{n}$, belong to $IA_{n}^{m}$.
\end{prop}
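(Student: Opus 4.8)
The plan is to pin down exactly which ``$u$-th row'' vectors produce elements of $IA_{n}^{m}$, reduce to a handful of module generators, and then run a machine built from partial conjugations and the subgroups $IGL_{n-1,s}$, with $n\geq4$ entering through the availability of a fourth index. For a row vector $v\in R_{n}^{n}$ with $v\vec{\sigma}=\vec{0}$ and vanishing $u$-th coordinate, write $\tau_{u}(v)$ for the matrix agreeing with $I_{n}$ outside its $u$-th row and whose $u$-th row is $\vec{e}_{u}+v$; these are exactly the matrices occurring in the statement, they multiply by $\tau_{u}(v)\tau_{u}(w)=\tau_{u}(v+w)$, and I extend the notation to allow a nonzero $u$-th coordinate whenever the resulting matrix is invertible over $R_{n}$. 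The first observation is that $V_{u}:=\{v\,:\,\tau_{u}(v)\in IA_{n}^{m}\}$ is an $R_{n}$-submodule of $\{v\,:\,v\vec{\sigma}=\vec{0},\ v_{u}=0\}$: it is certainly an additive group, and conjugating $\tau_{u}(v)$ by the partial conjugation $\rho_{u}^{(t)}\colon x_{u}\mapsto x_{t}^{-1}x_{u}x_{t}$ — which lies in $IA\left(\Phi_{n}\right)$, with matrix $I_{n}+x_{t}^{-1}\left(\sigma_{u}E_{u,t}-\sigma_{t}E_{u,u}\right)$ — gives $\tau_{u}(x_{t}^{-1}v)$ after a one-line computation, so $V_{u}$ is stable under multiplication by each unit $x_{t}^{\pm1}$. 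Hence it suffices to prove that each of the two distinguished vectors $\sigma_{u}^{2}\mu_{u,m}(\sigma_{i}\vec{e}_{j}-\sigma_{j}\vec{e}_{i})$ and $\sigma_{u}\sigma_{j}\mu_{i,m}(\sigma_{i}\vec{e}_{j}-\sigma_{j}\vec{e}_{i})$ lies in $V_{u}$, the arbitrary $R_{n}$-coefficient then coming for free.

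The engine is the following. For $\ell\neq s$ one computes $\left(\rho_{\ell}^{(s)}\right)^{m}=\tau_{\ell}\big(x_{s}^{-m}\mu_{s,m}(\sigma_{\ell}\vec{e}_{s}-\sigma_{s}\vec{e}_{\ell})\big)$, which lies in $IA_{n}^{m}$ as an $m$-th power; a direct check shows it in fact lies in $IGL_{n-1,s}$ and that, under the isomorphism $IGL_{n-1,s}\cong GL_{n-1}\left(R_{n},\sigma_{s}R_{n}\right)$ of Proposition \ref{prop:iso}, it is the diagonal matrix with the unit $x_{s}^{-m}$ in position $\ell$. Commuting it, inside $IGL_{n-1,s}$, with the elements corresponding to $I_{n-1}+\sigma_{s}rE_{\ell,q}$ (explicitly $I_{n}+\sigma_{s}rE_{\ell,q}-\sigma_{q}rE_{\ell,s}\in IGL_{n-1,s}\leq IA\left(\Phi_{n}\right)$), and using that $IA_{n}^{m}$ is normal in $IA\left(\Phi_{n}\right)$, produces after translating back $\tau_{\ell}\big(x_{s}^{-m}\sigma_{s}\mu_{s,m}\,r\,(\sigma_{q}\vec{e}_{s}-\sigma_{s}\vec{e}_{q})\big)\in IA_{n}^{m}$ for all $r\in R_{n}$ and all $q\neq s,\ell$. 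Feeding two of these into the Koszul identity $\sigma_{a}(\sigma_{b}\vec{e}_{s}-\sigma_{s}\vec{e}_{b})-\sigma_{b}(\sigma_{a}\vec{e}_{s}-\sigma_{s}\vec{e}_{a})=\sigma_{s}(\sigma_{b}\vec{e}_{a}-\sigma_{a}\vec{e}_{b})$ yields $\tau_{\ell}\big(x_{s}^{-m}\sigma_{s}^{2}\mu_{s,m}\,r\,(\sigma_{a}\vec{e}_{b}-\sigma_{b}\vec{e}_{a})\big)\in IA_{n}^{m}$ for pairwise distinct $\ell,a,b\neq s$. Taking $s=u$ here is allowed, since $u\neq\ell$, and this already manufactures the required ``$\sigma_{u}^{2}\mu_{u,m}$ times Koszul vector'' elements — but in a row $\ell\neq u$, not the row $u$ demanded by the statement.

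Crossing from row $\ell$ to row $u$ is done via commutator identities such as $[\rho_{u}^{(\ell)},\tau_{\ell}(v)]=\tau_{u}(x_{\ell}^{-1}\sigma_{u}\,v)$ and $[\tau_{u}(r(\sigma_{t}\vec{e}_{\ell}-\sigma_{\ell}\vec{e}_{t})),\tau_{\ell}(v)]=\tau_{u}(r\sigma_{t}\,v)$, valid whenever the transported vector $v$ has vanishing $u$-th and $\ell$-th coordinates; the first keeps the transfer inside $IA_{n}^{m}$ because $\rho_{u}^{(\ell)}\in IA\left(\Phi_{n}\right)$, and both transfers introduce a factor from the augmentation ideal. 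Combining the elements so obtained with the families $m(\sigma_{i}\vec{e}_{j}-\sigma_{j}\vec{e}_{i})$ and $\sigma_{k}\mu_{k,m}(\sigma_{i}\vec{e}_{j}-\sigma_{j}\vec{e}_{i})$ ($k\neq u$) supplied by Proposition \ref{prop:type 1.1}, together with the congruence $\mu_{u,m}\equiv m\pmod{\sigma_{u}}$ and the $R_{n}$-module structure of $V_{u}$ from the first paragraph, one extracts precisely the vector $\sigma_{u}^{2}\mu_{u,m}(\sigma_{i}\vec{e}_{j}-\sigma_{j}\vec{e}_{i})$ in $V_{u}$. This last step — choosing the auxiliary indices and commutators so that exactly the stated coefficient survives in exactly row $u$, with the power of $\sigma_{u}$ neither too small (obstructed by the Koszul recombination, which is where the exponent $2$ comes from) nor too large (so it can be cancelled against Proposition \ref{prop:type 1.1}) — is the genuinely delicate bookkeeping of the proof, and it is here that having a fourth index ($n\geq4$) is used.

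For the second family the same machine is run, but seeded with the partial conjugations $\rho_{j}^{(i)}$ and $\rho_{u}^{(i)}$ (whose $m$-th powers carry $\mu_{i,m}$ rather than $\mu_{u,m}$, and the latter of which lies in $IGL_{n-1,i}$); the Koszul recombination now contributes the factor $\sigma_{j}$ and the row-transfer into row $u$ the factor $\sigma_{u}$, so that $\sigma_{u}\sigma_{j}\mu_{i,m}(\sigma_{i}\vec{e}_{j}-\sigma_{j}\vec{e}_{i})$ is what remains, and one finishes by combining with the family $\sigma_{k}\mu_{i,m}(\sigma_{i}\vec{e}_{j}-\sigma_{j}\vec{e}_{i})$ ($k\neq u,j$) of Proposition \ref{prop:type 1.1} exactly as before. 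I expect the main obstacle throughout to be this index/coefficient bookkeeping rather than any single matrix computation.
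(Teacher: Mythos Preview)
Your framework is sound --- the row-transfer commutators $[\rho_u^{(\ell)},\tau_\ell(v)]=\tau_u(x_\ell^{-1}\sigma_u v)$ and $[\tau_u(r(\sigma_t\vec e_\ell-\sigma_\ell\vec e_t)),\tau_\ell(v)]=\tau_u(r\sigma_t v)$ are exactly the tools the paper uses --- but your strategy for producing the \emph{input} $v$ overshoots, and the ``combining'' step you propose does not recover from this.

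Concretely, for the first family you feed the machine $s=u$, so after the Koszul recombination you hold $\sigma_u^2\mu_{u,m}(\sigma_a\vec e_b-\sigma_b\vec e_a)$ in some row $\ell\neq u$, with $a,b\neq u,\ell$. Any transfer to row $u$ then costs one more factor from $\mathfrak A$, landing you at $\sigma_u^3\mu_{u,m}(\cdots)$ or $\sigma_t\sigma_u^2\mu_{u,m}(\cdots)$ in $V_u$. Your plan is to combine this with the Proposition~\ref{prop:type 1.1} families for row $u$ (where $k\neq u$) and the congruence $\mu_{u,m}\equiv m\pmod{\sigma_u}$, writing $\sigma_u^2\mu_{u,m}=m\sigma_u^2+\sigma_u^3 g$. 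The piece $m\sigma_u^2(\cdots)$ is indeed in $V_u$, but $\sigma_u^3 g(\cdots)$ is not an $R_n$-combination of what you have: you only have $\sigma_u^3\mu_{u,m}(\cdots)$, and modulo $H_{n,m}$ this vanishes while $\sigma_u^3 g(\cdots)$ in general does not. (Also, your module argument via $\rho_u^{(t)}$ only yields closure of $V_u$ under $x_t^{\pm1}$ for $t\neq u$, so $V_u$ is a priori only a $\mathbb Z[x_t^{\pm1}:t\neq u]$-module; this is a secondary issue since your $IGL$ construction bakes in an arbitrary $r\in R_n$ anyway, but it is worth noting.) The second family has the analogous problem: with $s=i$ the Koszul step contributes $\sigma_i$, not $\sigma_j$, and the resulting vectors again carry a $\vec e_u$-component that blocks the clean transfer.

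The paper avoids all of this by a much shorter route: it applies Proposition~\ref{prop:type 1.1} not in row $u$ but in an auxiliary row $\ell\neq u,i,j$ (this is precisely where $n\geq4$ enters). In row $\ell$ the excluded index is $\ell$, so $k=u$ is now permitted, and form~2 of Proposition~\ref{prop:type 1.1} gives directly
\[
\sigma_u\mu_{u,m}f\,(\sigma_i\vec e_j-\sigma_j\vec e_i)\in V_\ell,\qquad f\in R_n,
\]
a vector with zero $u$- and $\ell$-coordinates. A single commutator with $I_n+\sigma_\ell E_{u,u}-\sigma_u E_{u,\ell}$ transfers this to row $u$ and introduces exactly one factor $\sigma_u$, yielding the target. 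For the second family the paper uses form~3 of Proposition~\ref{prop:type 1.1} in row $\ell$ (again with $k=u$) and transfers via $I_n+\sigma_\ell E_{u,j}-\sigma_j E_{u,\ell}$, which contributes the missing $\sigma_j$. No $IGL$ machinery, no Koszul recombination, no combining step --- just one use of Proposition~\ref{prop:type 1.1} in a different row followed by one explicit commutator.
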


\begin{proof}
(of Proposition \ref{prop:type 1.1}) Without loss of generality,
we assume that $u=1$. Observe now that for every $a_{i},b_{i}\in R_{n}$
for $2\leq i\leq n$ one has
\[
\left(\begin{array}{cccc}
1 & a_{2} & \cdots & a_{n}\\
0 &  & I_{n-1}
\end{array}\right)\left(\begin{array}{cccc}
1 & b_{2} & \cdots & b_{n}\\
0 &  & I_{n-1}
\end{array}\right)=\left(\begin{array}{cccc}
1 & a_{2}+b_{2} & \cdots & a_{n}+b_{n}\\
0 &  & I_{n-1}
\end{array}\right).
\]
Hence, it is enough to prove that the elements of the following forms
belong to $IA_{n}^{m}$ (when we write $a\vec{e}_{i}$ we mean that
the entry of the $i$-th column in the first row is $a$):
\begin{eqnarray*}
1. & \left(\begin{array}{cc}
1 & mf\left(\sigma_{i}\vec{e}_{j}-\sigma_{j}\vec{e}_{i}\right)\\
0 & I_{n-1}
\end{array}\right) & i,j\neq1,\,i\neq j,\,f\in R_{n}\\
2. & \left(\begin{array}{cc}
1 & \sigma_{k}\mu_{k,m}f\left(\sigma_{i}\vec{e}_{j}-\sigma_{j}\vec{e}_{i}\right)\\
0 & I_{n-1}
\end{array}\right) & i,j,k\neq1,\,i\neq j,\,f\in R_{n}\\
3. & \left(\begin{array}{cc}
1 & \sigma_{k}\mu_{i,m}f\left(\sigma_{i}\vec{e}_{j}-\sigma_{j}\vec{e}_{i}\right)\\
0 & I_{n-1}
\end{array}\right) & i,j,k\neq1,\,i\neq j,\,k\neq j,\,f\in R_{n}.
\end{eqnarray*}

We start with the elements of form 1. Here we have
\[
\left(\begin{array}{cc}
1 & mf\left(\sigma_{i}\vec{e}_{j}-\sigma_{j}\vec{e}_{i}\right)\\
0 & I_{n-1}
\end{array}\right)=\left(\begin{array}{cc}
1 & f\left(\sigma_{i}\vec{e}_{j}-\sigma_{j}\vec{e}_{i}\right)\\
0 & I_{n-1}
\end{array}\right)^{m}\in IA_{n}^{m}.
\]

We pass to the elements of form 2. In this case we have
\begin{eqnarray*}
IA_{n}^{m} & \ni & \left[\left(\begin{array}{cc}
1 & f\left(\sigma_{i}\vec{e}_{j}-\sigma_{j}\vec{e}_{i}\right)\\
0 & I_{n-1}
\end{array}\right)^{-1},\left(\begin{array}{cc}
x_{k} & -\sigma_{1}\vec{e}_{k}\\
0 & I_{n-1}
\end{array}\right)^{m}\right]\\
 & = & \left(\begin{array}{cc}
1 & \sigma_{k}\mu_{k,m}f\left(\sigma_{i}\vec{e}_{j}-\sigma_{j}\vec{e}_{i}\right)\\
0 & I_{n-1}
\end{array}\right).
\end{eqnarray*}

We finish with the elements of form 3. If $k=i$, it is a special
case of the previous case, so we assume $k\neq i$. So we assume that
$i,j,k$ are all different from each other and $i,j,k\neq1$ - observe
that this case is interesting only when $n\geq4$. The computation
here is more complicated than in the previous cases, so we will demonstrate
it for the special case: $n=4$, $i=2$, $j=3$, $k=4$. It is clear
that symmetrically, with similar argument, the same holds in general
when $n\geq4$ for every $i,j,k\neq1$ which different from each other.
So
\begin{eqnarray*}
IA_{4}^{m} & \ni & \left[\left(\begin{array}{cccc}
1 & 0 & -\sigma_{4}f & \sigma_{3}f\\
0 & 1 & 0 & 0\\
0 & 0 & 1 & 0\\
0 & 0 & 0 & 1
\end{array}\right),\left(\begin{array}{cccc}
1 & 0 & 0 & 0\\
0 & 1 & 0 & 0\\
0 & -\sigma_{3} & x_{2} & 0\\
0 & 0 & 0 & 1
\end{array}\right)^{-m}\right]\\
 & = & \left(\begin{array}{cccc}
1 & -\sigma_{4}f\mu_{2,m}\sigma_{3} & \sigma_{4}f\sigma_{2}\mu_{2,m} & 0\\
0 & 1 & 0 & 0\\
0 & 0 & 1 & 0\\
0 & 0 & 0 & 1
\end{array}\right).
\end{eqnarray*}
\end{proof}
We pass now to the proof of Proposition \ref{prop:type 1.2}.
\begin{proof}
(of Proposition \ref{prop:type 1.2}) Also here, without loss of generality,
we assume that $u=1$. Thus, all we need to show is that also the
elements of the following forms belong to $IA_{n}^{m}$:
\begin{eqnarray*}
1. & \left(\begin{array}{cc}
1 & \sigma_{1}^{2}\mu_{1,m}f\left(\sigma_{i}\vec{e}_{j}-\sigma_{j}\vec{e}_{i}\right)\\
0 & I_{n-1}
\end{array}\right) & i,j\neq1,\,i\neq j,\,f\in R_{n}\\
2. & \left(\begin{array}{cc}
1 & \sigma_{1}\sigma_{j}\mu_{i,m}f\left(\sigma_{i}\vec{e}_{j}-\sigma_{j}\vec{e}_{i}\right)\\
0 & I_{n-1}
\end{array}\right) & i,j\neq1,\,i\neq j,\,f\in R_{n}.
\end{eqnarray*}
Also here, to simplify the notations, we will demonstrate the proof
in the special case: $n=4$, $i=2$, $j=3$. We start with the first
form. From Proposition \ref{prop:type 1.1} we have (an element of
form 2 in Proposition \ref{prop:type 1.1})
\[
IA_{4}^{m}\ni\left(\begin{array}{cccc}
1 & 0 & 0 & 0\\
0 & 1 & 0 & 0\\
0 & 0 & 1 & 0\\
0 & \sigma_{3}\sigma_{1}\mu_{1,m}f & -\sigma_{2}\sigma_{1}\mu_{1,m}f & 1
\end{array}\right).
\]
Therefore, we also have
\begin{eqnarray*}
IA_{4}^{m} & \ni & \left[\left(\begin{array}{cccc}
x_{4} & 0 & 0 & -\sigma_{1}\\
0 & 1 & 0 & 0\\
0 & 0 & 1 & 0\\
0 & 0 & 0 & 1
\end{array}\right),\left(\begin{array}{cccc}
1 & 0 & 0 & 0\\
0 & 1 & 0 & 0\\
0 & 0 & 1 & 0\\
0 & \sigma_{3}\sigma_{1}\mu_{1,m}f & -\sigma_{2}\sigma_{1}\mu_{1,m}f & 1
\end{array}\right)\right]\\
 & = & \left(\begin{array}{cccc}
1 & -\sigma_{3}\sigma_{1}^{2}\mu_{1,m}f & \sigma_{2}\sigma_{1}^{2}\mu_{1,m}f & 0\\
0 & 1 & 0 & 0\\
0 & 0 & 1 & 0\\
0 & 0 & 0 & 1
\end{array}\right).
\end{eqnarray*}

We pass to the elements of form 2. From Proposition \ref{prop:type 1.1}
we have (an element of form 3 in Proposition \ref{prop:type 1.1})
\[
IA_{4}^{m}\ni\left(\begin{array}{cccc}
1 & 0 & 0 & 0\\
0 & 1 & 0 & 0\\
0 & 0 & 1 & 0\\
0 & \sigma_{1}\sigma_{3}\mu_{2,m}f & -\sigma_{1}\sigma_{2}\mu_{2,m}f & 1
\end{array}\right)
\]
and therefore, we have
\begin{eqnarray*}
IA_{4}^{m} & \ni & \left[\left(\begin{array}{cccc}
1 & 0 & \sigma_{4} & -\sigma_{3}\\
0 & 1 & 0 & 0\\
0 & 0 & 1 & 0\\
0 & 0 & 0 & 1
\end{array}\right),\left(\begin{array}{cccc}
1 & 0 & 0 & 0\\
0 & 1 & 0 & 0\\
0 & 0 & 1 & 0\\
0 & \sigma_{1}\sigma_{3}\mu_{2,m}f & -\sigma_{1}\sigma_{2}\mu_{2,m}f & 1
\end{array}\right)\right]\\
 & = & \left(\begin{array}{cccc}
1 & -\sigma_{1}\sigma_{3}^{2}\mu_{2,m}f & \sigma_{3}\sigma_{1}\sigma_{2}\mu_{2,m}f & 0\\
0 & 1 & 0 & 0\\
0 & 0 & 1 & 0\\
0 & 0 & 0 & 1
\end{array}\right).
\end{eqnarray*}
\end{proof}

\subsection{Elementary elements of type 2}
\begin{prop}
\label{prop:type 2}Let $n\geq4$, $1\leq u<v\leq n$ and $m\in\mathbb{N}$.
Then, the elements of $IA\left(\Phi_{n}\right)$ of the form
\[
\left(\begin{array}{ccccc}
I_{u-1} & 0 & 0 & 0 & 0\\
0 & 1+\sigma_{u}\sigma_{v}f & 0 & -\sigma_{u}^{2}f & 0\\
0 & 0 & I_{v-u-1} & 0 & 0\\
0 & \sigma_{v}^{2}f & 0 & 1-\sigma_{u}\sigma_{v}f & 0\\
0 & 0 & 0 & 0 & I_{n-v}
\end{array}\right)\begin{array}{c}
\leftarrow u\textrm{-th\,\,\,\,\ row}\\
\\
\leftarrow v\textrm{-th\,\,\,\,\ row}
\end{array}
\]
for $f\in H_{n,m}$, belong to $IA_{n}^{m}$.
\end{prop}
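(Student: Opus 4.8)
Write the displayed element as $M(f)=I_{n}+A(f)$, where $A(f)$ is the $n\times n$ matrix, supported on rows and columns $u,v$, whose $2\times2$ corner is
$f\bigl(\begin{smallmatrix}\sigma_{u}\sigma_{v}&-\sigma_{u}^{2}\\\sigma_{v}^{2}&-\sigma_{u}\sigma_{v}\end{smallmatrix}\bigr)=f\bigl(\begin{smallmatrix}\sigma_{u}\\\sigma_{v}\end{smallmatrix}\bigr)\bigl(\begin{smallmatrix}\sigma_{v}&-\sigma_{u}\end{smallmatrix}\bigr)$.
The first observation is that $\bigl(\begin{smallmatrix}\sigma_{v}&-\sigma_{u}\end{smallmatrix}\bigr)\bigl(\begin{smallmatrix}\sigma_{u}\\\sigma_{v}\end{smallmatrix}\bigr)=\sigma_{v}\sigma_{u}-\sigma_{u}\sigma_{v}=0$, so $A(f)$ has rank one and square zero. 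Consequently $A(f)\vec{\sigma}=\vec{0}$ and $M(f)\in GL_{n}\left(R_{n}\right)$, hence $M(f)\in IA\left(\Phi_{n}\right)$; and, more usefully, $M(f_{1})M(f_{2})=M(f_{1}+f_{2})$ and $M(f)^{m}=M(mf)$, i.e. $f\mapsto M(f)$ is a homomorphism $\left(R_{n},+\right)\to IA\left(\Phi_{n}\right)$. The plan therefore reduces the proposition to showing $M(f)\in IA_{n}^{m}$ for $f$ ranging over additive generators of $H_{n,m}=mR_{n}+\sum_{i=1}^{n}\left(x_{i}^{m}-1\right)R_{n}$, that is, for $f=mg$ and $f=\left(x_{i}^{m}-1\right)g=\sigma_{i}\mu_{i,m}g$ with $g\in R_{n}$, $1\le i\le n$.

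The case $f=mg$ is immediate, since $M(mg)=M(g)^{m}\in IA_{n}^{m}$. For $f=\sigma_{i}\mu_{i,m}g$ with $i\in\{u,v\}$, say $i=u$, I would use a ``shift'' conjugation. Let $D_{u}\in GL_{n}\left(R_{n}\right)$ be the matrix equal to $I_{n}$ except that its $(u,v)$-block is $\bigl(\begin{smallmatrix}1&0\\\sigma_{v}x_{u}^{-1}&x_{u}^{-1}\end{smallmatrix}\bigr)$. A direct check gives $D_{u}\vec{\sigma}=\vec{\sigma}$ and $\det D_{u}=x_{u}^{-1}$, so $D_{u}\in IA\left(\Phi_{n}\right)$; moreover $D_{u}$ fixes the column $\bigl(\begin{smallmatrix}\sigma_{u}\\\sigma_{v}\end{smallmatrix}\bigr)$ (placed in rows $u,v$) and sends the row $\bigl(\begin{smallmatrix}\sigma_{v}&-\sigma_{u}\end{smallmatrix}\bigr)$ to $x_{u}^{-1}$ times itself. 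Hence conjugation by $D_{u}^{m}$ carries $M(g)$ to $M(x_{u}^{m}g)$, and using the homomorphism property of the previous paragraph,
\[
\bigl[M(g)^{-1},D_{u}^{m}\bigr]=M(g)^{-1}\bigl(D_{u}^{m}M(g)D_{u}^{-m}\bigr)=M(-g)M(x_{u}^{m}g)=M\bigl((x_{u}^{m}-1)g\bigr).
\]
Since $D_{u}^{m}\in IA_{n}^{m}$ and $IA_{n}^{m}\vartriangleleft IA\left(\Phi_{n}\right)$, the left-hand side lies in $IA_{n}^{m}$, so $M\bigl((x_{u}^{m}-1)g\bigr)\in IA_{n}^{m}$. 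The case $i=v$ is symmetric, using the block $\bigl(\begin{smallmatrix}x_{v}^{-1}&\sigma_{u}x_{v}^{-1}\\0&1\end{smallmatrix}\bigr)$ instead.

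The remaining case $i\notin\{u,v\}$ is the main obstacle, and the reason the full hypothesis $n\ge4$ is needed. The construction just used genuinely breaks down here: one checks that there is \emph{no} $\gamma\in IA\left(\Phi_{n}\right)$ scaling the column $\bigl(\begin{smallmatrix}\sigma_{u}\\\sigma_{v}\end{smallmatrix}\bigr)$ and the row $\bigl(\begin{smallmatrix}\sigma_{v}&-\sigma_{u}\end{smallmatrix}\bigr)$ by units whose product is $x_{i}$, because any eigenvalue of such a $\gamma$ is forced to be $\equiv x_{i}\pmod{\sigma_{u}R_{n}+\sigma_{v}R_{n}}$, while no unit of $R_{n}$ is. So one must feed the extra coordinate(s) available because $n\ge4$ into the argument more deeply, presumably again via explicit commutators in the spirit of Propositions \ref{prop:type 1.1}--\ref{prop:type 1.2}: either by ``routing the shift through'' a third coordinate (combining the already-established exponent-$u$ and exponent-$v$ cases with further conjugations and commutators), or by realizing $M\bigl((x_{i}^{m}-1)g\bigr)$ directly as a product of commutators of the type-$1$ elements of Propositions \ref{prop:type 1.1}--\ref{prop:type 1.2}, which already lie in $IA_{n}^{m}$; in all of these the identity $\sigma_{v}\sigma_{u}-\sigma_{u}\sigma_{v}=0$ is what makes the unwanted quadratic cross-terms vanish, so that the chosen products land exactly on $M\bigl((x_{i}^{m}-1)g\bigr)$. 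As in those proofs I would carry the computation out for a single representative instance, say $n=4$ and $(u,v,i)=(1,2,3)$, the general statement following by relabelling indices; the bookkeeping of this step is where I expect essentially all of the (elementary but lengthy) work to lie.
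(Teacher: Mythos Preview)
Your additivity reduction $M(f_{1})M(f_{2})=M(f_{1}+f_{2})$ is correct and matches the paper, and the case $f=mg$ via $M(g)^{m}$ is fine. Your shift--conjugation argument for $i\in\{u,v\}$ is also correct (and in fact slicker than what the paper does for those cases). However, the proof is genuinely incomplete: the case $i\notin\{u,v\}$ is only sketched, and this is precisely the point where $n\ge4$ enters. Speculating that it should follow ``by routing the shift through a third coordinate'' or ``as a product of commutators of the type-1 elements'' is not a proof; you have not exhibited a single identity doing the job.

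The paper's route is rather different from your conjugation scheme and handles all cases uniformly. Working, as you suggest, with $n=4$, $(u,v)=(1,2)$, one picks an auxiliary index $w\notin\{u,v\}$ (say $w=4$) and verifies the identity
\[
\left[\,I_{4}+f(-\sigma_{2}E_{4,1}+\sigma_{1}E_{4,2})\,,\ \bigl(I_{4}+\sigma_{4}E_{1,1}+\sigma_{4}E_{2,2}-\sigma_{1}E_{1,4}-\sigma_{2}E_{2,4}\bigr)^{-1}\,\right]\cdot\bigl(I_{4}+\sigma_{4}f(-\sigma_{2}E_{4,1}+\sigma_{1}E_{4,2})\bigr)=M(f).
\]
The two factors on the left involving row $4$ are exactly type-1 elements in the sense of Proposition~\ref{prop:type 1.1}, and that proposition shows they lie in $IA_{4}^{m}$ whenever $f\in mR_{4}$ or $f\in\sigma_{r}\mu_{r,m}R_{4}$ with $r\neq4$. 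The remaining case $r=4$ is then obtained by switching the roles of the two auxiliary indices $3$ and $4$; this is exactly where the hypothesis $n\ge4$ (i.e.\ the existence of \emph{two} indices outside $\{u,v\}$) is used. So the ``routing through a third coordinate'' you guessed at is correct in spirit, but it is implemented not by conjugating $M(g)$, rather by expressing $M(f)$ as a commutator of a type-1 element with a fixed element of $IA(\Phi_{n})$, times a second type-1 correction, and then invoking Proposition~\ref{prop:type 1.1}.
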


\begin{proof}
As before, to simplify the notations we will demonstrate the proof
in the case: $n=4$, $u=1$ and $v=2$, and it will be clear from
the computation that the same holds in the general case, provided
$n\geq4$. 

First observe that for every $f,g\in R_{n}$ we have
\[
\left(\begin{array}{cccc}
1+\sigma_{1}\sigma_{2}f & -\sigma_{1}^{2}f & 0 & 0\\
\sigma_{2}^{2}f & 1-\sigma_{1}\sigma_{2}f & 0 & 0\\
0 & 0 & 1 & 0\\
0 & 0 & 0 & 1
\end{array}\right)\left(\begin{array}{cccc}
1+\sigma_{1}\sigma_{2}g & -\sigma_{1}^{2}g & 0 & 0\\
\sigma_{2}^{2}g & 1-\sigma_{1}\sigma_{2}g & 0 & 0\\
0 & 0 & 1 & 0\\
0 & 0 & 0 & 1
\end{array}\right)
\]
\[
=\left(\begin{array}{cccc}
1+\sigma_{1}\sigma_{2}\left(f+g\right) & -\sigma_{1}^{2}\left(f+g\right) & 0 & 0\\
\sigma_{2}^{2}\left(f+g\right) & 1-\sigma_{1}\sigma_{2}\left(f+g\right) & 0 & 0\\
0 & 0 & 1 & 0\\
0 & 0 & 0 & 1
\end{array}\right)
\]
so it is enough to consider the cases $f\in mR_{4}$ and $f\in\sigma_{r}\mu_{r,m}R_{4}$
for $1\leq r\leq4$, separately. Consider now the following computation.
For an arbitrary $f\in R_{n}$ we have
\begin{eqnarray*}
 &  & \left[\left(\begin{array}{cccc}
1 & 0 & 0 & 0\\
0 & 1 & 0 & 0\\
0 & 0 & 1 & 0\\
-\sigma_{2}f & \sigma_{1}f & 0 & 1
\end{array}\right),\left(\begin{array}{cccc}
x_{4} & 0 & 0 & -\sigma_{1}\\
0 & x_{4} & 0 & -\sigma_{2}\\
0 & 0 & 1 & 0\\
0 & 0 & 0 & 1
\end{array}\right)^{-1}\right]\\
 &  & \cdot\left(\begin{array}{cccc}
1 & 0 & 0 & 0\\
0 & 1 & 0 & 0\\
0 & 0 & 1 & 0\\
-\sigma_{4}\sigma_{2}f & \sigma_{4}\sigma_{1}f & 0 & 1
\end{array}\right)=\left(\begin{array}{cccc}
1+\sigma_{1}\sigma_{2}f & -\sigma_{1}^{2}f & 0 & 0\\
\sigma_{2}^{2}f & 1-\sigma_{1}\sigma_{2}f & 0 & 0\\
0 & 0 & 1 & 0\\
0 & 0 & 0 & 1
\end{array}\right).
\end{eqnarray*}

Therefore, we conclude that if 
\[
\left(\begin{array}{cccc}
1 & 0 & 0 & 0\\
0 & 1 & 0 & 0\\
0 & 0 & 1 & 0\\
-\sigma_{4}\sigma_{2}f & \sigma_{4}\sigma_{1}f & 0 & 1
\end{array}\right),\,\left(\begin{array}{cccc}
1 & 0 & 0 & 0\\
0 & 1 & 0 & 0\\
0 & 0 & 1 & 0\\
-\sigma_{2}f & \sigma_{1}f & 0 & 1
\end{array}\right)\in IA_{4}^{m}
\]
then also
\[
\left(\begin{array}{cccc}
1+\sigma_{1}\sigma_{2}f & -\sigma_{1}^{2}f & 0 & 0\\
\sigma_{2}^{2}f & 1-\sigma_{1}\sigma_{2}f & 0 & 0\\
0 & 0 & 1 & 0\\
0 & 0 & 0 & 1
\end{array}\right)\in IA_{4}^{m}.
\]
Thus, the cases $f\in mR_{4}$ and $f\in\sigma_{r}\mu_{r,m}R_{4}$
for $r\neq4$, are obtained immediately from Proposition \ref{prop:type 1.1}.
Hence, it remains to deal with the case $f\in\sigma_{r}\mu_{r,m}R_{4}$
for $r=4$. However, it is easy to see that by switching the roles
of $3$ and $4$, the remained case is also obtained by similar arguments. 
\end{proof}

\section{\label{sec:The-main-lemma}A main lemma}

In this section we prove Lemma \ref{thm:stage 1} which states that
for every $n\geq4$ and $m\in\mathbb{N}$ we have
\begin{eqnarray*}
IG_{n,m^{2}} & \subseteq & IA_{n}^{m}\cdot\prod_{i=1}^{n}ISL_{n-1,i}\left(\sigma_{i}H_{n,m}\right)\\
 & = & IA_{n}^{m}\cdot ISL_{n-1,1}\left(\sigma_{1}H_{n,m}\right)\cdot\ldots\cdot ISL_{n-1,n}\left(\sigma_{n}H_{n,m}\right).
\end{eqnarray*}

The proof will be presented in a few stages - each of which will be
covered in a separate subsection. In this sections $n\geq4$ will
be constant, so we will make notations simpler and write
\[
\begin{array}{ccccc}
R=R_{n}, & \mathfrak{A}=\mathfrak{A}_{n}, & H_{m}=H_{n,m}, & IA^{m}=IA_{n}^{m}, & IG_{m}=IG_{n,m}.\end{array}
\]

We will also use the following notations:
\[
\begin{array}{cc}
O_{m}=mR, & U_{r,m}=\mu_{r,m}R\,\,\,\,\textrm{when\,\,}\,\,\mu_{r,m}=\sum_{i=0}^{m-1}x_{r}^{i}\,\,\,\,\textrm{for}\,\,\,\,1\leq r\leq n\end{array}.
\]

Notice that it follows from the definitions, that $H_{m}=\sum_{r=1}^{n}\sigma_{r}U_{r,m}+O_{m}$
(we note that in \cite{key-14} the notation $U_{r,m}$ is used for
$\sigma_{r}\mu_{r,m}R$).

Before we get deeply into the details, let us give an outline of the
proof of the above main lemma. Given $0\leq u\leq n$, denote the
ideal
\[
\mathfrak{\tilde{A}}_{u}=\sum_{r=u+1}^{n}\sigma_{r}R\vartriangleleft R.
\]
The lemma is proven by induction on $1\leq u\leq n$. Note that by
Proposition \ref{prop:augmentation} $IA(\Phi_{n})\subseteq GL_{n}(R,\mathfrak{A})=GL_{n}(R,\mathfrak{\tilde{A}}_{0})$.
Now, let $g\in IG_{n,m^{2}}\cap GL_{n}(R,\mathfrak{\tilde{A}}_{u-1})$.
If one could show that by multiplying it by elements of $IA^{m}$
and an element of $ISL_{n-1,u}\left(\sigma_{u}H_{n,m}\right)$ we
can ``push'' $g$ to  an element of $IG_{n,m^{2}}\cap GL_{n}(R,\mathfrak{\tilde{A}}_{u})$,
then as $GL_{n}(R,\mathfrak{\tilde{A}}_{n})=\{I_{n}\}$, it will certainly
be sufficient for proving the lemma. The issue is that the elements
of $IA^{m}$ take us out from $IG_{n,m^{2}}$. Hence, we extend $IG_{n,m^{2}}\cap GL_{n}(R,\mathfrak{\tilde{A}}_{u-1})$
to a larger subgroup, denoted by $\mathbb{\mathbb{\tilde{J}}}_{m,u-1}$.
In general, these subgroups do not satisfy $\mathbb{\mathbb{\tilde{J}}}_{m,u}\subseteq\mathbb{\mathbb{\tilde{J}}}_{m,u-1}$.
However, in the delicate process described below we show that we can
``push'' $g\in\mathbb{\mathbb{\tilde{J}}}_{m,u-1}$ to an element
of $\mathbb{\mathbb{\tilde{J}}}_{m,u}$ by elements of $IA^{m}$ and
an element of $ISL_{n-1,u}\left(\sigma_{u}H_{n,m}\right)$. We go
out from $\mathbb{\mathbb{\tilde{J}}}_{m,u-1}$ and get into $\mathbb{\mathbb{\tilde{J}}}_{m,u}$.
The process ends when we get into $\mathbb{\mathbb{\tilde{J}}}_{m,n}=\left\{ I_{n}\right\} $.
 The definition of $\mathbb{\mathbb{\tilde{J}}}_{m,u}$ is quite delicate,
and so is the process. 

In Subsection \ref{subsec:5.1} we describe the above definitions
and process is details. Then, in Subsection \ref{subsec:5.2} we show
that given an element of $\mathbb{\mathbb{\tilde{J}}}_{m,u-1}$, before
``pushing'' it into $\mathbb{\mathbb{\tilde{J}}}_{m,u}$, one can
fix it a bit with elements of $\mathbb{\mathbb{\tilde{J}}}_{m,u-1}\cap IA^{m}$
to a more convenient form. Then, in Subsection \ref{subsec:Finishing}
we define the ``pushing elements'' from $IA^{m}$ and $ISL_{n-1,u}\left(\sigma_{u}H_{n,m}\right)$.

\subsection{\label{subsec:5.1}Reducing Lemma \ref{thm:stage 1}'s proof}

We start this subsection with introducing the following objects:
\begin{defn}
Let $m\in\mathbb{{N}}$. Define
\begin{eqnarray*}
R\vartriangleright J_{m} & = & \sum_{r=1}^{n}\sigma_{r}^{3}U_{r,m}+\mathfrak{A}^{2}O_{m}+\mathfrak{A}O_{m}^{2}\\
\mathbb{\mathbb{J}}_{m} & = & \left\{ I_{n}+A\,|\,\begin{array}{c}
I_{n}+A\in IA\left(\Phi_{n}\right)\cap GL_{n}\left(R,J_{m}\right)\\
\det\left(I_{n}+A\right)=\prod_{r=1}^{n}x_{r}^{s_{r}m^{2}},\,\,s_{r}\in\mathbb{Z}
\end{array}\right\} .
\end{eqnarray*}
\end{defn}

\begin{prop}
\label{prop:reduction1}For every $m\in\mathbb{N}$ we have
\[
IG_{m^{2}}=IA\left(\Phi_{n}\right)\cap GL_{n}\left(R,H_{m^{2}}\right)\subseteq\mathbb{\mathbb{J}}_{m}.
\]
\end{prop}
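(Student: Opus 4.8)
The plan is to verify, for a given $I_n+A\in IG_{m^2}$, the two conditions defining $\mathbb{J}_m$ --- the clause ``$I_n+A\in IA(\Phi_n)\cap GL_n(R,J_m)$'' splitting into ``$I_n+A\in IA(\Phi_n)$'', automatic since $IG_{m^2}\subseteq IA(\Phi_n)$, and ``all entries of $A$ lie in $J_m$''. The stated equality $IG_{m^2}=IA(\Phi_n)\cap GL_n(R,H_{m^2})$ is just the Magnus--Romanovski\u{\i} description recalled in $\varoint$\ref{sec:structure}: by definition $IG_{n,m}=\ker\bigl(IA(\Phi_n)\to Aut(\Phi_{n,m})\bigr)$, which under the matrix identification of $\Phi_{n,m}$ is exactly $\{I_n+A\in GL_n(R,H_{n,m})\mid A\vec\sigma=\vec 0\}=IA(\Phi_n)\cap GL_n(R,H_{n,m})$. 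So the content of the proposition is the inclusion into $\mathbb{J}_m$: for $I_n+A\in IG_{m^2}$ one must show (a) every entry of $A$ lies in $J_m$, and (b) $\det(I_n+A)=\prod_{r}x_r^{m^2 s_r}$ for suitable $s_r\in\mathbb Z$.

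For (a), the crux is the purely ring-theoretic fact $H_{m^2}\cap\mathfrak A\subseteq J_m$: granting it, (a) holds because by Proposition \ref{prop:augmentation} each entry of $A$ lies in $\mathfrak A$, and it lies in $H_{m^2}$ since $I_n+A\in GL_n(R,H_{m^2})$. To prove the fact, recall $H_{m^2}=\sum_{r=1}^{n}(x_r^{m^2}-1)R+m^2R$, and I would first show $x_r^{m^2}-1\in J_m$ for every $r$. Write $x_r^{m^2}-1=\sigma_r\mu_{r,m^2}$ and factor $\mu_{r,m^2}=\mu_{r,m}\tau_r$, where $\tau_r=\sum_{j=0}^{m-1}x_r^{jm}$; since each $x_r^{jm}-1$ lies in $(x_r^m-1)R=\sigma_r\mu_{r,m}R$, we have $\tau_r=m+\sigma_r\mu_{r,m}\theta_r$ for some $\theta_r\in R$, and similarly $\mu_{r,m}=m+\sigma_r\nu_r$ for some $\nu_r\in R$. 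Substituting the first identity gives $x_r^{m^2}-1=m\sigma_r\mu_{r,m}+\sigma_r^2\mu_{r,m}^2\theta_r$; then substituting $\mu_{r,m}=m+\sigma_r\nu_r$ and expanding produces a sum of terms each visibly lying in $m^2\mathfrak A=\mathfrak A O_m^2$ or in $m\mathfrak A^2=\mathfrak A^2 O_m$, with one exception, the term $\sigma_r^4\nu_r^2\theta_r$, which by $\sigma_r\nu_r=\mu_{r,m}-m$ equals $\sigma_r^3\mu_{r,m}(\nu_r\theta_r)-m\,\sigma_r^3\nu_r\theta_r\in\sigma_r^3U_{r,m}+m\mathfrak A^2$. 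Hence $x_r^{m^2}-1\in J_m$, so $\sum_{r}(x_r^{m^2}-1)R\subseteq J_m$. Finally, for $a\in H_{m^2}\cap\mathfrak A$ write $a=\sum_{r}(x_r^{m^2}-1)c_r+m^2 d$; applying the augmentation $\varepsilon\colon R\to\mathbb Z$ (whose kernel is $\mathfrak A$) gives $m^2\varepsilon(d)=\varepsilon(a)=0$, so $d\in\mathfrak A$ and $m^2 d\in m^2\mathfrak A=\mathfrak A O_m^2\subseteq J_m$; therefore $a\in J_m$.

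For (b), Proposition \ref{prop:augmentation} already gives $\det(I_n+A)=\prod_{r}x_r^{s_r}$ with $s_r\in\mathbb Z$, so it remains to see $m^2\mid s_r$ for each $r$. Since $I_n+A\in GL_n(R,H_{m^2})$, its image in $GL_n(R/H_{m^2})=GL_n(\mathbb Z_{m^2}[\mathbb Z_{m^2}^n])$ is $I_n$, hence $\det(I_n+A)$ reduces to $1$, that is, $\prod_{r}\bar x_r^{s_r}=1$ in $\mathbb Z_{m^2}[\mathbb Z_{m^2}^n]$. As $\bar x_1,\dots,\bar x_n$ are the standard generators of $\mathbb Z_{m^2}^n$ --- each of order $m^2$ --- and distinct elements of $\mathbb Z_{m^2}^n$ form a $\mathbb Z_{m^2}$-basis of the group algebra, this forces $m^2\mid s_r$ for all $r$. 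Writing $s_r=m^2 s_r'$ with $s_r'\in\mathbb Z$ then puts $\det(I_n+A)$ in the form required by the definition of $\mathbb{J}_m$, so $I_n+A\in\mathbb{J}_m$.

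The step I expect to require genuine work is the expansion establishing $x_r^{m^2}-1\in J_m$ (together with its obvious analogue for all $r$, and the passage to arbitrary $R$-multiples); the rest is essentially formal. Indeed the ideal $J_m$ appears to be designed precisely so that this expansion closes up: the leading $\sigma_r$ comes from $x_r^{m^2}-1=\sigma_r\mu_{r,m^2}$, the congruence $\mu_{r,m^2}\equiv m^2\pmod{\sigma_r}$ supplies either a factor $m^2$ or one further $\sigma_r$, and iterating $\mu_{r,m}=m+\sigma_r\nu_r$ accounts for the leftover $\sigma_r^3U_{r,m}$-term.
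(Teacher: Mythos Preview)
Your proof is correct and follows essentially the same approach as the paper: both arguments establish $x_r^{m^2}-1\in J_m$ via the factorization $\mu_{r,m^2}=\mu_{r,m}\cdot\sum_{j=0}^{m-1}x_r^{jm}$ together with the congruences $\mu_{r,m}\equiv m\pmod{\sigma_r}$ and $\sum_j x_r^{jm}\equiv m\pmod{\sigma_r\mu_{r,m}}$, then handle the $m^2$-part by intersecting with $\mathfrak A$, and treat the determinant exactly as you do. The paper's expansion is slightly more streamlined---it works directly at the level of ideals, writing $x_r^{m^2}-1\in\sigma_r(\sigma_rR+mR)(\sigma_r\mu_{r,m}R+mR)\subseteq\sigma_r^3U_{r,m}+\mathfrak A^2O_m+\mathfrak AO_m^2$, which avoids the extra manipulation you perform on the $\sigma_r^4\nu_r^2\theta_r$ term---but the substance is identical.
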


\begin{proof}
Let $x\in R$. Notice that $\sum_{i=0}^{m-1}x^{i}\in\left(x-1\right)R+mR$.
In addition, by replacing $x$ by $x^{m}$ we obtain $\sum_{i=0}^{m-1}x^{mi}\in\left(x^{m}-1\right)R+mR$.
Hence
\begin{eqnarray*}
x^{m^{2}}-1 & = & \left(x-1\right)\sum_{i=0}^{m^{2}-1}x^{i}=\left(x-1\right)\sum_{i=0}^{m-1}x^{i}\sum_{i=0}^{m-1}x^{mi}\\
 & \in & \left(x-1\right)\left(\left(x-1\right)R+mR\right)\left(\left(x^{m}-1\right)R+mR\right)\\
 & \subseteq & \left(x-1\right)^{2}\left(x^{m}-1\right)R+\left(x-1\right)^{2}mR+\left(x-1\right)m^{2}R.
\end{eqnarray*}

Thus, we obtain that $H_{m^{2}}=\sum_{r=1}^{n}(x_{r}^{m^{2}}-1)R+m^{2}R\subseteq J_{m}+O_{m}^{2}$.
Now, let $I_{n}+A\in IG_{m^{2}}=IA\left(\Phi_{n}\right)\cap GL_{n}\left(R,H_{m^{2}}\right)$.
From the above observation and from Proposition \ref{prop:augmentation},
it follows that every entry of $A$ belongs to $\left(J_{m}+O_{m}^{2}\right)\cap\mathfrak{A}=J_{m}$.
In addition, by Proposition \ref{prop:augmentation}, the determinant
of $I_{n}+A$ is of the form $\prod_{r=1}^{n}x_{r}^{s_{r}}$. On the
other hand, we know that under the projection $R_{n}\to\mathbb{Z}_{m^{2}}[\mathbb{Z}_{m^{2}}^{n}]$
we have $I_{n}+A\mapsto I_{n}$ and thus also $\prod_{r=1}^{n}x_{r}^{s_{r}}=\det\left(I_{n}+A\right)\mapsto1$.
Therefore, $\det\left(I_{n}+A\right)$ is of the form $\prod_{r=1}^{n}x_{r}^{m^{2}s_{r}}$,
as required.
\end{proof}
\begin{cor}
\label{cor:first reduction}Let $n\geq4$ and $m\in\mathbb{N}$. Then,
for proving Lemma \ref{thm:stage 1} it suffices to prove that
\[
\mathbb{\mathbb{J}}_{m}\subseteq IA^{m}\cdot\prod_{i=1}^{n}ISL_{n-1,i}\left(\sigma_{i}H_{m}\right).
\]
\end{cor}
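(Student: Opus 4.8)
The goal is to prove Corollary~\ref{cor:first reduction}, i.e.\ that it suffices to replace the principal congruence subgroup $IG_{n,m^2}$ by the larger, more flexible set $\mathbb{J}_m$. The plan is straightforward: combine Proposition~\ref{prop:reduction1} with the hypothesized inclusion $\mathbb{J}_m\subseteq IA^m\cdot\prod_{i=1}^n ISL_{n-1,i}(\sigma_iH_m)$. Since Proposition~\ref{prop:reduction1} already establishes $IG_{m^2}=IA(\Phi_n)\cap GL_n(R,H_{m^2})\subseteq\mathbb{J}_m$, chaining the two inclusions immediately gives
\[
IG_{n,m^2}\subseteq\mathbb{J}_m\subseteq IA^m\cdot\prod_{i=1}^n ISL_{n-1,i}(\sigma_iH_m),
\]
which is exactly the statement of Lemma~\ref{thm:stage 1}. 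So the corollary is nothing more than a transitivity observation; there is no real obstacle in this particular step.

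The only point that requires a word of care is to confirm that all the objects match up on the nose. First, one should note that the equality $IG_{m^2}=IA(\Phi_n)\cap GL_n(R,H_{m^2})$ used here is precisely the identification of the principal congruence subgroup recalled in $\varoint$\ref{sec:structure} via the Magnus embedding and the Romanovski\u{\i}/\cite{key-6} surjectivity result (together with the explicit description $H_{n,m}=\sum_{i=1}^n(x_i^m-1)R_n+mR_n$). Second, the notational conventions set at the start of $\varoint$\ref{sec:The-main-lemma} ($R=R_n$, $H_m=H_{n,m}$, $IA^m=IA_n^m$, $IG_m=IG_{n,m}$) must be kept in mind so that the statement of the corollary, phrased with the abbreviated notation, literally coincides with the statement of Lemma~\ref{thm:stage 1}, phrased with the full notation. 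With those identifications in place, the proof is a single line.

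Thus I would simply write: ``By Proposition~\ref{prop:reduction1}, $IG_{m^2}\subseteq\mathbb{J}_m$; hence if $\mathbb{J}_m\subseteq IA^m\cdot\prod_{i=1}^n ISL_{n-1,i}(\sigma_iH_m)$, then also $IG_{m^2}\subseteq IA^m\cdot\prod_{i=1}^n ISL_{n-1,i}(\sigma_iH_m)$, which is the assertion of Lemma~\ref{thm:stage 1}.'' The substantive content has been deferred to the subsequent subsections, where one must actually prove the inclusion $\mathbb{J}_m\subseteq IA^m\cdot\prod_{i=1}^n ISL_{n-1,i}(\sigma_iH_m)$ by a Gaussian-elimination-type argument using the elementary elements of types~1 and~2 computed in $\varoint$\ref{sec:elementary}. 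The real work — and the genuine obstacle of the whole paper — lies there, not in this reduction corollary, whose proof is purely formal.
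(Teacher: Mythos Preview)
Your proof is correct and matches the paper's approach exactly: the corollary is stated immediately after Proposition~\ref{prop:reduction1} with no separate proof, since it follows by chaining the inclusion $IG_{m^{2}}\subseteq\mathbb{J}_{m}$ with the hypothesized inclusion to obtain the statement of Lemma~\ref{thm:stage 1}. Your additional remarks about the notational identifications are accurate and helpful but not strictly needed.
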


We continue with defining the following objects:
\begin{defn}
\label{def:objects}For $0\leq u\leq n$ and $1\leq v\leq n$, define
the following ideals of $R=R_{n}=\mathbb{Z}[x_{1}^{\pm1},\ldots,x_{n}^{\pm1}]$:
\begin{eqnarray*}
\mathfrak{\tilde{A}}_{u} & = & \sum_{r=u+1}^{n}\sigma_{r}R\\
\tilde{J}_{m,u,v} & = & \begin{cases}
\mathfrak{\tilde{A}}_{u}\left(\sum_{r=1}^{u}\mathfrak{A}\sigma_{r}U_{r,m}+\mathfrak{A}O_{m}+O_{m}^{2}\right)+\\
\sum_{r=u+1}^{n}\sigma_{r}^{3}U_{r,m} & v\leq u\\
\mathfrak{\tilde{A}}_{u}\left(\sum_{r=1}^{u}\mathfrak{A}\sigma_{r}U_{r,m}+\mathfrak{A}O_{m}+O_{m}^{2}\right)+\\
\sum_{v\neq r=u+1}^{n}\sigma_{r}^{3}U_{r,m}+\mathfrak{A}\sigma_{v}^{2}U_{v,m} & v>u
\end{cases}
\end{eqnarray*}
and for $0\leq u\leq n$ define the groups $\tilde{\mathbb{A}}_{u}=IA\left(\Phi_{n}\right)\cap GL_{n}(R,\mathfrak{\tilde{A}}_{u})$,
and
\[
\mathbb{\mathbb{\tilde{J}}}_{m,u}=\left\{ I_{n}+A\in IA\left(\Phi_{n}\right)\,|\,\begin{array}{c}
\det\left(I_{n}+A\right)=\prod_{i=1}^{n}x_{i}^{s_{i}m^{2}}\textrm{,\,\,every\,\,entry\,\,in}\\
\textrm{the\,\,}v\textrm{-th\,\,colmun\,\,of\,\,}A\textrm{\,\,belongs\,\,to\,\,}\tilde{J}_{m,u,v}
\end{array}\right\} .
\]
\end{defn}

\begin{rem}
If $I_{n}+A\in\mathbb{\mathbb{\tilde{J}}}_{m,u}$, the entries of
the columns of $A$ may belong to different ideals in $R$, so it
is not obvious that $\mathbb{\tilde{\mathbb{J}}}_{m,u}$ is indeed
a group, i.e. closed under matrix multiplication and the inverse operation.
However, showing that $\mathbb{\tilde{\mathbb{J}}}_{m,u}$ is a group
is not difficult and we leave it to the reader.
\end{rem}

Notice now the extreme cases: 

1. For $u=0$ we have (for every $v$ and $m$) $\mathfrak{\tilde{A}}_{0}=\mathfrak{A}$,
and $J_{m}\subseteq\tilde{J}_{m,0,v}$. Hence, we have $\mathbb{\mathbb{J}}_{m}\subseteq\mathbb{\mathbb{\tilde{J}}}_{m,0}$.

2. For $u=n$ we have (for every $v$ and $m$) $\mathfrak{\tilde{A}}_{n}=\tilde{J}_{m,n,v}=0$.
Hence, we also have $\mathbb{\mathbb{\tilde{J}}}_{m,n}=\left\{ I_{n}\right\} $.
\begin{cor}
\label{cor:reduction 2}For proving Lemma \ref{thm:stage 1}, it is
enough to prove that for every $1\leq u\leq n$
\[
\mathbb{\mathbb{\tilde{J}}}_{m,u-1}\subseteq IA^{m}\cdot ISL_{n-1,u}\left(\sigma_{u}H_{m}\right)\cdot\mathbb{\mathbb{\tilde{J}}}_{m,u}.
\]
\end{cor}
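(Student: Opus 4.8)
The plan is to prove Corollary~\ref{cor:reduction 2} by telescoping: assuming that for every $1\leq u\leq n$ one has $\mathbb{\mathbb{\tilde{J}}}_{m,u-1}\subseteq IA^{m}\cdot ISL_{n-1,u}\left(\sigma_{u}H_{m}\right)\cdot\mathbb{\mathbb{\tilde{J}}}_{m,u}$, we chain these inclusions for $u=1,2,\ldots,n$ and use the two extreme cases noted just before the corollary. First I would record the factorization $\mathbb{\mathbb{J}}_{m}\subseteq\mathbb{\mathbb{\tilde{J}}}_{m,0}$; then applying the hypothesis with $u=1$ gives $\mathbb{\mathbb{\tilde{J}}}_{m,0}\subseteq IA^{m}\cdot ISL_{n-1,1}(\sigma_{1}H_{m})\cdot\mathbb{\mathbb{\tilde{J}}}_{m,1}$, applying it again with $u=2$ rewrites the trailing $\mathbb{\mathbb{\tilde{J}}}_{m,1}$ factor as $IA^{m}\cdot ISL_{n-1,2}(\sigma_{2}H_{m})\cdot\mathbb{\mathbb{\tilde{J}}}_{m,2}$, and so on, until the last step with $u=n$ replaces $\mathbb{\mathbb{\tilde{J}}}_{m,n-1}$ by $IA^{m}\cdot ISL_{n-1,n}(\sigma_{n}H_{m})\cdot\mathbb{\mathbb{\tilde{J}}}_{m,n}=IA^{m}\cdot ISL_{n-1,n}(\sigma_{n}H_{m})$, since $\mathbb{\mathbb{\tilde{J}}}_{m,n}=\{I_{n}\}$.

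The one genuine subtlety is that the inclusions are being composed on the right, so at each stage the trailing $\mathbb{\mathbb{\tilde{J}}}_{m,u}$ factor has to be pulled out to the right past the already-accumulated prefix $IA^{m}\cdot ISL_{n-1,1}(\sigma_{1}H_{m})\cdots ISL_{n-1,u}(\sigma_{u}H_{m})$; this is fine because we are only claiming a set-theoretic inclusion of the form $\mathbb{\mathbb{J}}_{m}\subseteq IA^{m}\cdot\prod_{i=1}^{n}ISL_{n-1,i}(\sigma_{i}H_{m})$, and a product of subsets $S_1 S_2\cdots S_k$ containing an element $g$ means $g=s_1s_2\cdots s_k$ with $s_j\in S_j$; substituting $s_k\in\mathbb{\mathbb{\tilde{J}}}_{m,u-1}$ by a product $s_k=a\cdot b\cdot c$ with $a\in IA^m$, $b\in ISL_{n-1,u}(\sigma_u H_m)$, $c\in\mathbb{\mathbb{\tilde{J}}}_{m,u}$ simply lengthens the word on the right, and no rearrangement or normality is needed. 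So the telescoping is purely formal once Corollary~\ref{cor:reduction 2} is granted. I would also explicitly invoke Corollary~\ref{cor:first reduction} to note that proving $\mathbb{\mathbb{J}}_{m}\subseteq IA^{m}\cdot\prod_{i=1}^{n}ISL_{n-1,i}(\sigma_{i}H_{m})$ is all that is required for Lemma~\ref{thm:stage 1}.

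The substance of the proof of course lies in establishing the hypothesis of Corollary~\ref{cor:reduction 2} for each $u$, i.e. the single-step inclusion $\mathbb{\mathbb{\tilde{J}}}_{m,u-1}\subseteq IA^{m}\cdot ISL_{n-1,u}(\sigma_{u}H_{m})\cdot\mathbb{\mathbb{\tilde{J}}}_{m,u}$, and this is where the main obstacle sits. The idea there is a matrix-reduction argument: given $I_n+A\in\mathbb{\mathbb{\tilde{J}}}_{m,u-1}$, one wants to use the elementary elements of $IA^m$ computed in Section~\ref{sec:elementary} (Propositions~\ref{prop:type 1.1}, \ref{prop:type 1.2}, and the type-2 elements of Proposition~\ref{prop:type 2}) to clear the $u$-th row and column, pushing the ``error'' into a factor lying in $ISL_{n-1,u}(\sigma_{u}H_{m})$ and leaving a remainder whose $v$-th columns lie in the smaller ideals $\tilde{J}_{m,u,v}$. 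The bookkeeping of exactly which ideal each entry lands in after each elementary operation — in particular checking the divisibility claims involving $\sigma_r$, $\mu_{r,m}$, $O_m$ and $O_m^2$ that define $\tilde{J}_{m,u,v}$, and verifying that the determinant stays of the form $\prod x_i^{s_i m^2}$ — is the delicate part and is presumably carried out over the remaining subsections of Section~\ref{sec:The-main-lemma}. For the present statement, however, the proof is complete once the per-$u$ inclusion is available and the telescoping above is spelled out.
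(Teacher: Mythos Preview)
Your telescoping argument is essentially the paper's, but there is a genuine slip at exactly the point you single out. After iterating the hypothesis you arrive at the interleaved product
\[
\mathbb{J}_{m}\subseteq IA^{m}\cdot ISL_{n-1,1}(\sigma_{1}H_{m})\cdot IA^{m}\cdot ISL_{n-1,2}(\sigma_{2}H_{m})\cdots IA^{m}\cdot ISL_{n-1,n}(\sigma_{n}H_{m}),
\]
whereas the target of Corollary~\ref{cor:first reduction} (and of Lemma~\ref{thm:stage 1} itself) is the product with a \emph{single} $IA^{m}$ factor on the left:
\[
IA^{m}\cdot ISL_{n-1,1}(\sigma_{1}H_{m})\cdots ISL_{n-1,n}(\sigma_{n}H_{m}).
\]
Your claim that ``no rearrangement or normality is needed'' is therefore incorrect: to pass from the first display to the second you must slide each intermediate $IA^{m}$ factor to the left past the $ISL_{n-1,i}(\sigma_{i}H_{m})$ factors already accumulated. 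This is precisely where the paper invokes the normality of $IA^{m}=\langle IA(\Phi_{n})^{m}\rangle$ in $IA(\Phi_{n})$: since each $ISL_{n-1,i}(\sigma_{i}H_{m})\subseteq IA(\Phi_{n})$, normality gives $ISL_{n-1,i}(\sigma_{i}H_{m})\cdot IA^{m}=IA^{m}\cdot ISL_{n-1,i}(\sigma_{i}H_{m})$ as sets, and the consolidation follows by induction. The repair is one line, but without it the argument does not reach the stated conclusion.
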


\begin{proof}
Using that $IA^{m}$ is normal in $IA\left(\Phi_{n}\right)$ and the
latter observations, under the above assumption, one obtains that
\begin{eqnarray*}
\mathbb{\mathbb{J}}_{m}\subseteq\mathbb{\mathbb{\tilde{J}}}_{m,0} & \subseteq & IA^{m}\cdot ISL_{n-1,1}\left(\sigma_{1}H_{m}\right)\cdot\mathbb{\mathbb{\tilde{J}}}_{m,1}\\
 & \subseteq & \ldots\\
 & \subseteq & \prod_{u=1}^{n}\left(IA^{m}\cdot ISL_{n-1,u}\left(\sigma_{u}H_{m}\right)\right)\cdot\mathbb{\mathbb{\tilde{J}}}_{m,n}\\
 & = & IA^{m}\prod_{u=1}^{n}ISL_{n-1,u}\left(\sigma_{u}H_{m}\right)
\end{eqnarray*}
which is the requirement of Corollary \ref{cor:first reduction}.
\end{proof}
We continue with defining the following objects:
\begin{defn}
For $0\leq u\leq n$ and $1\leq v\leq n$, define the following ideals
of $R$:
\begin{eqnarray*}
J_{m,u,v} & = & \begin{cases}
\mathfrak{A}\left(\sum_{r=1}^{u}\mathfrak{A}\sigma_{r}U_{r,m}+\mathfrak{A}O_{m}+O_{m}^{2}\right)+\\
\sum_{r=u+1}^{n}\sigma_{r}^{3}U_{r,m} & v\leq u\\
\mathfrak{A}\left(\sum_{r=1}^{u}\mathfrak{A}\sigma_{r}U_{r,m}+\mathfrak{A}O_{m}+O_{m}^{2}\right)+\\
\sum_{v\neq r=u+1}^{n}\sigma_{r}^{3}U_{r,m}+\mathfrak{A}\sigma_{v}^{2}U_{v,m} & v>u
\end{cases}
\end{eqnarray*}
and for $0\leq u\leq n$ define the group
\[
\mathbb{J}_{m,u}=\left\{ I_{n}+A\in IA\left(\Phi_{n}\right)\,|\,\begin{array}{c}
\det\left(I_{n}+A\right)=\prod_{i=1}^{n}x_{i}^{s_{i}m^{2}}\textrm{,\,\,every\,\,entry\,\,in}\\
\textrm{the\,\,}v\textrm{-th\,\,colmun\,\,of\,\,}A\textrm{\,\,belongs\,\,to\,\,}J_{m,u,v}
\end{array}\right\} .
\]
\end{defn}

It follows from the definitions that for every $1\leq u\leq n$ we
have:
\begin{enumerate}
\item $J_{m,u-1,v}\subseteq J_{m,u,v}$, but $\mathfrak{\tilde{A}}_{u-1}\supseteq\mathfrak{\tilde{A}}_{u}$.
Thus, we have also
\item $\mathbb{J}_{m,u-1}\subseteq\mathbb{J}_{m,u}$, but $\tilde{\mathbb{A}}_{u-1}\supseteq\tilde{\mathbb{A}}_{u}$.
\end{enumerate}
Here comes the connection between the latter objects to the objects
defined in Definition \ref{def:objects}.
\begin{prop}
\label{lem:connection}For every $0\leq u\leq n$ and $1\leq v\leq n$
we have $J_{m,u,v}\cap\mathfrak{\tilde{A}}_{u}=\tilde{J}_{m,u,v}$,
and hence $\mathbb{J}_{m,u}\cap\tilde{\mathbb{A}}_{u}=\mathbb{\mathbb{\tilde{\mathbb{J}}}}_{m,u}$.
\end{prop}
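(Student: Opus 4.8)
The ``hence'' part is a purely entry-wise consequence of the ideal identity, so I would settle it first and briefly: a matrix $I_{n}+A\in IA\left(\Phi_{n}\right)$ with $\det\left(I_{n}+A\right)=\prod_{i}x_{i}^{s_{i}m^{2}}$ lies in $\mathbb{J}_{m,u}\cap\tilde{\mathbb{A}}_{u}$ iff, for every $v$, the entries of its $v$-th column lie in $J_{m,u,v}$ and also lie in $\tilde{\mathfrak{A}}_{u}$, i.e.\ lie in $J_{m,u,v}\cap\tilde{\mathfrak{A}}_{u}$; granting the ideal identity $J_{m,u,v}\cap\tilde{\mathfrak{A}}_{u}=\tilde{J}_{m,u,v}$, this is exactly the defining condition of $\tilde{\mathbb{J}}_{m,u}$, where in one direction one uses $\tilde{J}_{m,u,v}\subseteq\tilde{\mathfrak{A}}_{u}$ to note that any matrix in $\tilde{\mathbb{J}}_{m,u}$ automatically lands in $\tilde{\mathbb{A}}_{u}$. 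So all the content is in the ideal identity.

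To prove $J_{m,u,v}\cap\tilde{\mathfrak{A}}_{u}=\tilde{J}_{m,u,v}$, observe that both ideals have the form ``$(\text{ideal})\cdot I+K$'': set $I:=\sum_{r=1}^{u}\mathfrak{A}\sigma_{r}U_{r,m}+\mathfrak{A}O_{m}+O_{m}^{2}$ for the common ``head'' and let $K$ denote the common ``tail'' ($\sum_{r=u+1}^{n}\sigma_{r}^{3}U_{r,m}$ if $v\leq u$, and $\sum_{v\neq r=u+1}^{n}\sigma_{r}^{3}U_{r,m}+\mathfrak{A}\sigma_{v}^{2}U_{v,m}$ if $v>u$), so that $J_{m,u,v}=\mathfrak{A}I+K$ and $\tilde{J}_{m,u,v}=\tilde{\mathfrak{A}}_{u}I+K$. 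The first thing to record is that $K\subseteq\tilde{\mathfrak{A}}_{u}$ and $\tilde{\mathfrak{A}}_{u}I\subseteq\tilde{\mathfrak{A}}_{u}$ --- every $\sigma_{r}$ with $r>u$ lies in $\tilde{\mathfrak{A}}_{u}$, and when $v>u$ this includes $\sigma_{v}$, so $\mathfrak{A}\sigma_{v}^{2}U_{v,m}\subseteq\tilde{\mathfrak{A}}_{u}$ as well. Together with $\tilde{\mathfrak{A}}_{u}\subseteq\mathfrak{A}$ this gives the easy inclusion $\tilde{J}_{m,u,v}\subseteq J_{m,u,v}\cap\tilde{\mathfrak{A}}_{u}$, and the case split $v\leq u$ versus $v>u$ never needs to be mentioned again.

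For the reverse inclusion I would split $\mathfrak{A}=B_{u}+\tilde{\mathfrak{A}}_{u}$ with $B_{u}:=\sum_{r=1}^{u}\sigma_{r}R$, so that $J_{m,u,v}=B_{u}I+\tilde{J}_{m,u,v}$; as $\tilde{J}_{m,u,v}\subseteq\tilde{\mathfrak{A}}_{u}$, the modular law gives $J_{m,u,v}\cap\tilde{\mathfrak{A}}_{u}=\tilde{J}_{m,u,v}+\bigl(B_{u}I\cap\tilde{\mathfrak{A}}_{u}\bigr)$, so it remains to show $B_{u}I\cap\tilde{\mathfrak{A}}_{u}\subseteq\tilde{\mathfrak{A}}_{u}I$. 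The point is that, writing $I=\mathfrak{A}P+O_{m}^{2}$ with $P:=\sum_{r=1}^{u}\sigma_{r}U_{r,m}+O_{m}$, the ideals $B_{u},P,O_{m},O_{m}^{2}$ are all extended from the subring $R_{u}:=\mathbb{Z}[x_{1}^{\pm1},\dots,x_{u}^{\pm1}]$ (because $\sigma_{r}\mu_{r,m}=x_{r}^{m}-1\in R_{u}$ for $r\leq u$, and $m\in R_{u}$); write $B_{u}=\mathfrak{a}R$ and $P=\mathfrak{p}R$ with $\mathfrak{a},\mathfrak{p}\vartriangleleft R_{u}$. Expanding $B_{u}I=B_{u}^{2}P+B_{u}\tilde{\mathfrak{A}}_{u}P+m^{2}B_{u}$ and separating the part extended from $R_{u}$ yields $B_{u}I=\mathfrak{c}R+\mathfrak{a}\mathfrak{p}\tilde{\mathfrak{A}}_{u}$, where $\mathfrak{c}:=\mathfrak{a}^{2}\mathfrak{p}+m^{2}\mathfrak{a}\vartriangleleft R_{u}$ and $\mathfrak{a}\mathfrak{p}\tilde{\mathfrak{A}}_{u}\subseteq\tilde{\mathfrak{A}}_{u}$. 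The decisive point is the lemma that for any ideal $\mathfrak{c}$ of $R_{u}$ one has $\mathfrak{c}R\cap\tilde{\mathfrak{A}}_{u}=\mathfrak{c}\tilde{\mathfrak{A}}_{u}$: the substitution $x_{r}\mapsto1$ for $r>u$ is an $R_{u}$-linear retraction $R\twoheadrightarrow R_{u}$ with kernel exactly $\tilde{\mathfrak{A}}_{u}$ splitting $R_{u}\hookrightarrow R$ (as $R$ is a free $R_{u}$-module with $1$ among a basis), so $R=R_{u}\oplus\tilde{\mathfrak{A}}_{u}$ and hence $\mathfrak{c}R=\mathfrak{c}\oplus\mathfrak{c}\tilde{\mathfrak{A}}_{u}$ as $R_{u}$-modules. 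Applying the modular law a second time together with this lemma gives $B_{u}I\cap\tilde{\mathfrak{A}}_{u}=\mathfrak{a}\mathfrak{p}\tilde{\mathfrak{A}}_{u}+\mathfrak{c}\tilde{\mathfrak{A}}_{u}$, and one line of termwise checking places this in $\tilde{\mathfrak{A}}_{u}I$: $\mathfrak{a}\mathfrak{p}\tilde{\mathfrak{A}}_{u}=\tilde{\mathfrak{A}}_{u}B_{u}P\subseteq\tilde{\mathfrak{A}}_{u}\mathfrak{A}P\subseteq\tilde{\mathfrak{A}}_{u}I$; $m^{2}\mathfrak{a}\tilde{\mathfrak{A}}_{u}\subseteq m^{2}\tilde{\mathfrak{A}}_{u}=\tilde{\mathfrak{A}}_{u}O_{m}^{2}\subseteq\tilde{\mathfrak{A}}_{u}I$; and $\mathfrak{a}^{2}\mathfrak{p}\tilde{\mathfrak{A}}_{u}\subseteq\mathfrak{a}\mathfrak{p}\tilde{\mathfrak{A}}_{u}$. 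Hence $B_{u}I\cap\tilde{\mathfrak{A}}_{u}\subseteq\tilde{\mathfrak{A}}_{u}I\subseteq\tilde{J}_{m,u,v}$, which finishes the reverse inclusion and the proposition.

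The only genuine difficulty I anticipate is organizational bookkeeping, not hard algebra: one must keep consistently separated the ideals extended from $R_{u}$ (the ``head'' $I$ with its pieces $P,O_{m},O_{m}^{2},B_{u}$, and the auxiliary $\mathfrak{c}$) from the ideals that live genuinely over $R_{u}$ (the ``tail'' $K$ and $\tilde{\mathfrak{A}}_{u}$), so that the two applications of the modular law and the splitting $R=R_{u}\oplus\tilde{\mathfrak{A}}_{u}$ are invoked exactly where they apply; with that set-up the individual containments are all immediate.
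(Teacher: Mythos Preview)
Your proof is correct and follows essentially the same approach as the paper: both rely on the splitting $R=R_{u}\oplus\tilde{\mathfrak{A}}_{u}$ (equivalently $\mathfrak{A}=\mathfrak{A}_{u}+\tilde{\mathfrak{A}}_{u}$) to separate the ``pure $R_{u}$'' part of $J_{m,u,v}$ from a part already lying in $\tilde{J}_{m,u,v}$, and then use that the pure $R_{u}$ part meets $\tilde{\mathfrak{A}}_{u}$ trivially. Your write-up packages this with the modular law and extended-ideal notation ($B_{u}=\mathfrak{a}R$, $P=\mathfrak{p}R$, the lemma $\mathfrak{c}R\cap\tilde{\mathfrak{A}}_{u}=\mathfrak{c}\tilde{\mathfrak{A}}_{u}$), while the paper simply expands $(\tilde{\mathfrak{A}}_{u}+\mathfrak{A}_{u})^{2}(\cdots)$ by hand and reads off the result; the content is the same.
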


\begin{proof}
It is clear from the definitions that we have $\tilde{J}_{m,u,v}\subseteq J_{m,u,v}\cap\mathfrak{\tilde{A}}_{u}$,
so we have to show an opposite inclusion. Let $a\in J_{m,u,v}\cap\mathfrak{\tilde{A}}_{u}$.
As 
\[
\tilde{J}_{m,u,v}\supseteq\begin{cases}
\sum_{r=u+1}^{n}\sigma_{r}^{3}U_{r,m} & v\leq u\\
\sum_{v\neq r=u+1}^{n}\sigma_{r}^{3}U_{r,m}+\mathfrak{A}\sigma_{v}^{2}U_{v,m} & v>u
\end{cases}
\]
we can assume that $a\in\mathfrak{A}\left(\sum_{r=1}^{u}\mathfrak{A}\sigma_{r}U_{r,m}+\mathfrak{A}O_{m}+O_{m}^{2}\right)\cap\mathfrak{\tilde{A}}_{u}$. 

Observe now that by dividing an element $b\in R$ by $\sigma_{u+1},\ldots,\sigma_{n}$
(with residue), one can present $b$ as a summand of an element of
$\mathfrak{\tilde{A}}_{u}$ with an element of $R_{u}=\mathbb{Z}[x_{1}^{\pm1},\ldots,x_{u}^{\pm1}]$.
Hence, $R=\mathfrak{\tilde{A}}_{u}+R_{u}$ and $\mathfrak{A}=\mathfrak{\tilde{A}}_{u}+\mathfrak{A}_{u}$,
where $\mathfrak{A}_{u}$ is the augmentation ideal of $R_{u}$. Hence
\begin{eqnarray*}
a & \in & (\mathfrak{\tilde{A}}_{u}+\mathfrak{A}_{u})^{2}\sum_{r=1}^{u}\sigma_{r}\mu_{r,m}(\mathfrak{\tilde{A}}_{u}+R_{u})\\
 &  & +\,(\mathfrak{\tilde{A}}_{u}+\mathfrak{A}_{u})^{2}m(\mathfrak{\tilde{A}}_{u}+R_{u})+(\mathfrak{\tilde{A}}_{u}+\mathfrak{A}_{u})m^{2}(\mathfrak{\tilde{A}}_{u}+R_{u})\\
 & \subseteq & \tilde{J}_{m,u,v}+\mathfrak{A}_{u}^{2}\sum_{r=1}^{u}\sigma_{r}\mu_{r,m}R_{u}+\mathfrak{A}_{u}^{2}mR_{u}+\mathfrak{A}_{u}m^{2}R_{u}.
\end{eqnarray*}
Hence, we can assume that $a\in\left(\mathfrak{A}_{u}^{2}\sum_{r=1}^{u}\sigma_{r}\mu_{r,m}R_{u}+\mathfrak{A}_{u}^{2}mR_{u}+\mathfrak{A}_{u}m^{2}R_{u}\right)\cap\mathfrak{\tilde{A}}_{u}=\left\{ 0\right\} $,
i.e. $a=0\in\tilde{J}_{m,u,v}$, as required.
\end{proof}
Due to the above, we can now reduce Lemma \ref{thm:stage 1}'s proof
as follows.
\begin{cor}
\label{cor:reduction}For proving Lemma \ref{thm:stage 1} it suffices
to show that given $1\leq u\leq n$, for every $\alpha\in\mathbb{\tilde{\mathbb{J}}}_{m,u-1}$
there exist $\beta\in IA^{m}\cap\mathbb{J}_{m,u}$ and $\gamma\in ISL_{n-1,u}\left(\sigma_{u}H_{m}\right)\cap\mathbb{\mathbb{J}}_{m,u}$
such that $\gamma\alpha\beta\in\tilde{\mathbb{A}}{}_{u}$.
\end{cor}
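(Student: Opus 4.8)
The statement is a reduction, so I would prove it by deducing it from the already-established reductions, chiefly Corollary~\ref{cor:reduction 2}: to prove Lemma~\ref{thm:stage 1} it is enough to show, for every $1\le u\le n$, that $\tilde{\mathbb{J}}_{m,u-1}\subseteq IA^{m}\cdot ISL_{n-1,u}\left(\sigma_{u}H_{m}\right)\cdot\tilde{\mathbb{J}}_{m,u}$. So I would fix $u$ and an arbitrary $\alpha\in\tilde{\mathbb{J}}_{m,u-1}$, invoke the assumed condition to obtain $\beta\in IA^{m}\cap\mathbb{J}_{m,u}$ and $\gamma\in ISL_{n-1,u}\left(\sigma_{u}H_{m}\right)\cap\mathbb{J}_{m,u}$ with $\gamma\alpha\beta\in\tilde{\mathbb{A}}_{u}$, and then exhibit $\alpha$ as a product of an element of $IA^{m}$, an element of $ISL_{n-1,u}\left(\sigma_{u}H_{m}\right)$, and an element of $\tilde{\mathbb{J}}_{m,u}$, in that order.

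\textbf{First step.} I would upgrade $\gamma\alpha\beta\in\tilde{\mathbb{A}}_{u}$ to $\gamma\alpha\beta\in\tilde{\mathbb{J}}_{m,u}$. By Proposition~\ref{lem:connection}, $\tilde{\mathbb{J}}_{m,u-1}=\mathbb{J}_{m,u-1}\cap\tilde{\mathbb{A}}_{u-1}$, so in particular $\alpha\in\mathbb{J}_{m,u-1}\subseteq\mathbb{J}_{m,u}$, using the inclusion $\mathbb{J}_{m,u-1}\subseteq\mathbb{J}_{m,u}$ noted right after the definition of $\mathbb{J}_{m,u}$. Since $\beta,\gamma\in\mathbb{J}_{m,u}$ as well and $\mathbb{J}_{m,u}$ is a group, the product $\delta:=\gamma\alpha\beta$ lies in $\mathbb{J}_{m,u}$; combining with $\delta\in\tilde{\mathbb{A}}_{u}$ and Proposition~\ref{lem:connection} again gives $\delta\in\mathbb{J}_{m,u}\cap\tilde{\mathbb{A}}_{u}=\tilde{\mathbb{J}}_{m,u}$. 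Thus $\alpha=\gamma^{-1}\delta\beta^{-1}$ where $\gamma^{-1}\in ISL_{n-1,u}\left(\sigma_{u}H_{m}\right)$ (this set is a group, being an intersection of groups), $\beta^{-1}\in IA^{m}$ (a group), and $\delta\in\tilde{\mathbb{J}}_{m,u}$.

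\textbf{Second step.} It remains to reorder the three factors of $\gamma^{-1}\delta\beta^{-1}$ into the shape $IA^{m}\cdot ISL_{n-1,u}\left(\sigma_{u}H_{m}\right)\cdot\tilde{\mathbb{J}}_{m,u}$, which I would do using the normality of $IA^{m}$ in $IA\left(\Phi_{n}\right)$. Put $\eta:=(\gamma^{-1}\delta)\,\beta^{-1}\,(\gamma^{-1}\delta)^{-1}$; this is a conjugate of $\beta^{-1}\in IA^{m}$ by the element $\gamma^{-1}\delta\in IA\left(\Phi_{n}\right)$, hence $\eta\in IA^{m}$, and a direct check gives $\eta\cdot\gamma^{-1}\cdot\delta=\gamma^{-1}\delta\beta^{-1}\delta^{-1}\gamma\gamma^{-1}\delta=\gamma^{-1}\delta\beta^{-1}=\alpha$. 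Therefore $\alpha\in IA^{m}\cdot ISL_{n-1,u}\left(\sigma_{u}H_{m}\right)\cdot\tilde{\mathbb{J}}_{m,u}$, and since $\alpha\in\tilde{\mathbb{J}}_{m,u-1}$ was arbitrary, the inclusion required by Corollary~\ref{cor:reduction 2} holds for each $u$; hence Lemma~\ref{thm:stage 1} follows.

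\textbf{Main point to watch.} There is no genuine obstacle, as this is a purely formal rearrangement; the two places needing care are, first, that one must land $\gamma\alpha\beta$ in $\tilde{\mathbb{J}}_{m,u}$ and not merely in $\tilde{\mathbb{A}}_{u}$ — this is precisely where the group structure of $\mathbb{J}_{m,u}$ and Proposition~\ref{lem:connection} are used — and second, that the prescribed left-to-right order of the factors ($IA^{m}$, then $ISL$, then $\tilde{\mathbb{J}}_{m,u}$) forces the conjugation trick with $\eta$ instead of simply reading off $\gamma^{-1}\delta\beta^{-1}$.
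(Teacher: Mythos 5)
Your proposal is correct and follows essentially the same route as the paper: use $\tilde{\mathbb{J}}_{m,u-1}\subseteq\mathbb{J}_{m,u-1}\subseteq\mathbb{J}_{m,u}$ together with Proposition \ref{lem:connection} to get $\gamma\alpha\beta\in\tilde{\mathbb{A}}_{u}\cap\mathbb{J}_{m,u}=\tilde{\mathbb{J}}_{m,u}$, and then normality of $IA^{m}$ in $IA\left(\Phi_{n}\right)$ to rewrite $\alpha\in ISL_{n-1,u}\left(\sigma_{u}H_{m}\right)\cdot\tilde{\mathbb{J}}_{m,u}\cdot IA^{m}=IA^{m}\cdot ISL_{n-1,u}\left(\sigma_{u}H_{m}\right)\cdot\tilde{\mathbb{J}}_{m,u}$, which is exactly the hypothesis of Corollary \ref{cor:reduction 2}. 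Your explicit conjugation element $\eta$ is just a spelled-out version of the paper's set-level rearrangement, so the two arguments coincide.
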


\begin{proof}
As clearly $\mathbb{J}_{m,u}\supseteq\mathbb{\mathbb{J}}_{m,u-1}\supseteq\mathbb{\mathbb{\tilde{J}}}_{m,u-1}$,
we obtain from Proposition \ref{lem:connection} that $\gamma\alpha\beta\in\tilde{\mathbb{A}}{}_{u}\cap\mathbb{J}_{m,u}=\mathbb{\mathbb{\tilde{J}}}_{m,u}$.
Thus
\[
\alpha\in ISL_{n-1,u}\left(\sigma_{u}H_{m}\right)\cdot\mathbb{\mathbb{\tilde{J}}}_{m,u}\cdot IA^{m}=IA^{m}\cdot ISL_{n-1,u}\left(\sigma_{u}H_{m}\right)\cdot\mathbb{\mathbb{\tilde{J}}}_{m,u}.
\]
This yields that $\mathbb{\mathbb{\tilde{J}}}_{m,u-1}\subseteq IA^{m}\cdot ISL_{n-1,u}\left(\sigma_{u}H_{m}\right)\cdot\tilde{\mathbb{\mathbb{J}}}_{m,u}$
which is the requirement of Corollary \ref{cor:reduction 2}.
\end{proof}

\subsection{\label{subsec:5.2}A technical lemma}

In this section we will prove a technical lemma, which will help us
in subsection \ref{subsec:Finishing} to prove Lemma \ref{thm:stage 1}.
In the following subsections $1\leq u\leq n$ will be constant. We
will use the following notations:
\begin{itemize}
\item For $a\in R$ we denote its image in $R_{u}$ under the projection
$x_{u+1},\ldots,x_{n}\mapsto1$ by $\bar{a}$. In addition, we denote
its image in $R_{u-1}$ under the projection $x_{u},\ldots,x_{n}\mapsto1$
by $\bar{\bar{a}}$.
\item For $\alpha\in GL_{n}\left(R\right)$ we denote its image in $GL_{n}\left(R_{u}\right)$
under the projection $x_{u+1},\ldots,x_{n}\mapsto1$ by $\bar{\alpha}$.
\item Similarly, we will use the following notations for every $m\in\mathbb{N}$:

\begin{itemize}
\item $\mathfrak{\bar{A}}=\mathfrak{A}_{u}=\sum_{i=1}^{u}\sigma_{i}R_{u}$,
$\bar{U}_{r,m}=\mu_{r,m}R_{u}$ for $1\leq r\leq u$, $\bar{O}_{m}=mR_{u}$
and $\bar{H}_{m}=H_{u,m}=\sum_{r=1}^{u}\sigma_{r}\bar{U}_{r,m}+\bar{O}_{m}$.
\item $\bar{\bar{\mathfrak{A}}}=\mathfrak{A}_{u-1}=\sum_{i=1}^{u-1}\sigma_{i}R_{u-1}$,
$\bar{\bar{U}}_{r,m}=\mu_{r,m}R_{u-1}$ for $1\leq r\leq u-1$ and
$\bar{\bar{O}}_{m}=mR_{u-1}$.
\end{itemize}
\end{itemize}
Now, let $\alpha=I_{n}+A\in\mathbb{\mathbb{\tilde{J}}}_{m,u-1}$,
and denote the entries of $A$ by $a_{i,j}$. Consider the $u$-th
row of $A$. Under the above assumption, for every $v$ we have
\[
a_{u,v}\in\begin{cases}
\mathfrak{\tilde{A}}_{u-1}\left(\sum_{r=1}^{u-1}\mathfrak{A}\sigma_{r}U_{r,m}+\mathfrak{A}O_{m}+O_{m}^{2}\right)+\\
\sum_{r=u}^{n}\sigma_{r}^{3}U_{r,m} & v<u\\
\mathfrak{\tilde{A}}_{u-1}\left(\sum_{r=1}^{u-1}\mathfrak{A}\sigma_{r}U_{r,m}+\mathfrak{A}O_{m}+O_{m}^{2}\right)+\\
\sum_{v\neq r=u}^{n}\sigma_{r}^{3}U_{r,m}+\mathfrak{A}\sigma_{v}^{2}U_{v,m} & v\geq u.
\end{cases}
\]
Hence we have
\begin{equation}
\bar{a}_{u,v}\in\begin{cases}
\sigma_{u}\left(\sum_{r=1}^{u-1}\mathfrak{\bar{A}}\sigma_{r}\bar{U}_{r,m}+\bar{\mathfrak{A}}\bar{O}_{m}+\bar{O}_{m}^{2}\right)+\mathfrak{\bar{A}}\sigma_{u}^{2}\bar{U}_{u,m}\\
=\sigma_{u}\left(\sum_{r=1}^{u}\mathfrak{\bar{A}}\sigma_{r}\bar{U}_{r,m}+\mathfrak{\bar{A}}\bar{O}_{m}+\bar{O}_{m}^{2}\right) & v=u\\
\sigma_{u}\left(\sum_{r=1}^{u-1}\mathfrak{\bar{A}}\sigma_{r}\bar{U}_{r,m}+\mathfrak{\bar{A}}\bar{O}_{m}+\bar{O}_{m}^{2}\right)+\sigma_{u}^{3}\bar{U}_{u,m}\\
=\sigma_{u}\left(\sum_{r=1}^{u-1}\mathfrak{\bar{A}}\sigma_{r}\bar{U}_{r,m}+\sigma_{u}^{2}\bar{U}_{u,m}+\mathfrak{\bar{A}}\bar{O}_{m}+\bar{O}_{m}^{2}\right) & v\neq u.
\end{cases}\label{eq:reminder}
\end{equation}

We can state now the technical lemma:
\begin{lem}
Let $\alpha=I_{n}+A\in\mathbb{\tilde{\mathbb{J}}}_{m,u-1}$. Then,
there exists $\delta\in IA^{m}\cap\mathbb{\mathbb{\tilde{J}}}_{m,u-1}$
such that for every $v\neq u$, the $\left(u,v\right)$-th entry of
$\overline{\alpha\delta^{-1}}$ belongs to $\sigma_{u}^{2}\bar{H}_{m}$.
\end{lem}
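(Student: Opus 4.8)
The plan is to build $\delta$ out of the row-$u$ elementary matrices constructed in Section~\ref{sec:elementary}, and to reduce the claim to a single congruence for the projected $u$-th row of $\alpha$. Every matrix produced by Propositions~\ref{prop:type 1.1} and \ref{prop:type 1.2} (with the present $u$) has the form $I_n+B$ with $B$ supported on the $u$-th row and $B_{u,u}=0$; a product of such matrices is again of this form, since $B_1B_2=0$ for any two such $B_1,B_2$, and then $(I_n+B)^{-1}=I_n-B$. For $\delta=I_n+B$ of this shape a direct computation gives that the $(u,v)$-entry of $\overline{\alpha\delta^{-1}}$ equals $\bar a_{u,v}-(1+\bar a_{u,u})\bar B_{u,v}$. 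Since $\bar a_{u,u}\in\sigma_u\bar H_m$ and $\bar a_{u,v}\in\sigma_uR_u$ by \eqref{eq:reminder}, it therefore suffices to find $\delta\in IA^m\cap\tilde{\mathbb{J}}_{m,u-1}$ with $\bar B_{u,v}\equiv\bar a_{u,v}\pmod{\sigma_u^2\bar H_m}$ for every $v\neq u$; then $\bar a_{u,v}-(1+\bar a_{u,u})\bar B_{u,v}\equiv-\bar a_{u,u}\bar a_{u,v}\in\sigma_u^2\bar H_m$, as wanted.

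To produce such a $\delta$ I would first exhibit the $u$-th row of $\bar A$ as a syzygy. From $A\vec\sigma=0$, applying the projection $x_{u+1},\ldots,x_n\mapsto 1$ yields $\sum_{v=1}^{u}\bar a_{u,v}\sigma_v=0$ in $R_u$, and by \eqref{eq:reminder} each $\bar a_{u,v}$ is divisible by $\sigma_u$. Cancelling the non-zero-divisor $\sigma_u$, and using that $\sigma_1,\ldots,\sigma_u$ is a regular sequence in $R_u=\mathbb{Z}[x_1^{\pm1},\ldots,x_u^{\pm1}]$ (so the Koszul complex on these elements is exact), one writes
\[
(\bar a_{u,1},\ldots,\bar a_{u,u})=\sigma_u\sum_{1\le i<j\le u}c_{ij}\,(\sigma_i\vec e_j-\sigma_j\vec e_i),\qquad c_{ij}\in R_u .
\]

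The next, and central, step is to bound the $c_{ij}$ by feeding \eqref{eq:reminder} through this presentation — controlling in particular the ``diagonal'' coefficients $c_{iu}$ by exactness of the Koszul complex modulo $(\sigma_1,\ldots,\sigma_{u-1})$ — and to split each $\sigma_uc_{ij}$, modulo terms already lying in $\sigma_u^2\bar H_m$, into summands of exactly the shapes that occur as coefficients in Propositions~\ref{prop:type 1.1} and \ref{prop:type 1.2}: the $m$-, $\mu_{k,m}\sigma_k$- and $\mu_{i,m}\sigma_k$-multiples for the pairs $i<j<u$, and the $\sigma_u\mu_{u,m}$- and $\sigma_u\mu_{i,m}$-multiples for the pairs $(i,u)$, each still retaining the outer factor $\sigma_u$. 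Each such summand, times its Koszul relation, belongs to $IA^m$ by Proposition~\ref{prop:type 1.1} or \ref{prop:type 1.2}; let $\delta$ be the product of all of them. Taking every free coefficient inside $R_u$ makes the bar projection transparent, so $\bar B_{u,v}\equiv\bar a_{u,v}\pmod{\sigma_u^2\bar H_m}$ for all $v\neq u$; and because every summand carries the extra $\sigma_u$ and $\sigma_u\in\mathfrak{\tilde{A}}_{u-1}$, reading \eqref{eq:reminder} in the reverse direction (in the style of the proof of Proposition~\ref{lem:connection}, using $\mathfrak{A}=\mathfrak{\tilde{A}}_{u-1}+\mathfrak{A}_{u-1}$) places the $v$-th column of $B$ inside the ideal $\tilde{J}_{m,u-1,v}$ for each $v$. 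As $\det\delta=1$, this gives $\delta\in\tilde{\mathbb{J}}_{m,u-1}$, completing the proof.

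The hard part will be this central step, i.e.\ showing that the syzygy coefficients $c_{ij}$ genuinely decompose into the rather rigid catalogue of elementary generators available inside $IA^m$ and that the columns of $B$ land in the prescribed ideals. The delicate points are: absorbing the $\bar{O}_m^2=m^2R_u$ contribution in \eqref{eq:reminder}; coping with the asymmetry between the cases $v=u$ and $v\neq u$ there; and handling the Koszul relations $\sigma_i\vec e_u-\sigma_u\vec e_i$, whose companion entry sits in column $u$ — a column that our row-$u$ elementary matrices are forced to leave equal to $0$ — so that these must be realized through Proposition~\ref{prop:type 1.2} in such a way that only ``junk'' already contained in $\sigma_u^2\bar H_m$ is introduced in the other columns.
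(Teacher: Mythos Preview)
Your plan has a genuine gap precisely at the point you flagged as ``delicate,'' namely the $\bar{O}_m^{2}$ contribution in the columns $v>u$, and it cannot be closed with the toolkit you allow yourself. Every matrix coming from Propositions~\ref{prop:type 1.1} and~\ref{prop:type 1.2} with row index $u$ has its $(u,v)$ entry (for $v\neq u$) built out of the factors $\sigma_i,\sigma_j,\sigma_k,\mu_{i,m},\mu_{k,m}$ with $i,j,k\neq u$. Under the projection $x_{u+1},\ldots,x_n\mapsto 1$ every $\sigma_r$ with $r>u$ vanishes, so for $v>u$ the surviving entry in column~$v$ always carries a factor $\sigma_r$ with $r<u$; in other words $\bar B_{u,v}\in\sum_{r=1}^{u-1}\sigma_r R_u$. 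But by \eqref{eq:reminder} the entry $\bar a_{u,v}$ contains a summand $\sigma_u m^{2}s$ with $s\in R_u$ (and after reducing, $s\in\mathbb{Z}$), and one checks directly that $\sigma_u m^{2}\notin\sum_{r=1}^{u-1}\sigma_r R_u+\sigma_u^{2}\bar H_m$: project further by $x_1,\ldots,x_{u-1}\mapsto 1$ and then $x_u\mapsto 1$ to see the obstruction. Hence no product of your elementary matrices can satisfy $\bar B_{u,v}\equiv\bar a_{u,v}\pmod{\sigma_u^{2}\bar H_m}$ for $v>u$.

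The paper treats the columns $v<u$ and $v>u$ in two separate steps. For $v<u$ it proceeds much as you intend (a further projection $x_u\mapsto 1$ and a syzygy argument over $R_{u-1}$, then Proposition~\ref{prop:type 1.1}). For $v>u$ it brings in an element you have excluded: the $m^{2}$-th power
\[
\delta'=(I_n+\sigma_v E_{u,u}-\sigma_u E_{u,v})^{m^{2}}=I_n+\sigma_v\mu_{v,m^{2}}E_{u,u}-\sigma_u\mu_{v,m^{2}}E_{u,v},
\]
which has $B_{u,u}\neq 0$ and $\det\delta'=x_v^{m^{2}}\neq 1$, but still lies in $IA^{m}\cap\tilde{\mathbb J}_{m,u-1}$; after projection $\bar\delta'=I_n-m^{2}\sigma_u E_{u,v}$, which is exactly what is needed to kill the $\sigma_u m^{2}s$ term. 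Only after that does the paper use a row-$u$ element $\delta_v$ built from Proposition~\ref{prop:type 1.1} to eliminate the remaining $\sigma_u\sum_{i<u}\sigma_if_i$ piece. So the fix is to enlarge your catalogue by these $m^{2}$-th powers (or equivalently, to drop the hypothesis $B_{u,u}=0$ and $\det\delta=1$) and handle $v>u$ separately; your Koszul reduction is then only needed for $v<u$, where the relation $\sum_{v\le u}\bar a_{u,v}\sigma_v=0$ actually lives.
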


We will prove the lemma in two steps. Here is the first step:
\begin{prop}
\label{prop:step1}Let $\alpha=I_{n}+A\in\mathbb{\tilde{\mathbb{J}}}_{m,u-1}$.
Then, there exists $\delta\in IA^{m}\cap\mathbb{\mathbb{\tilde{J}}}_{m,u-1}$
such that for every $v<u$, the $\left(u,v\right)$-th entry of $\overline{\alpha\delta^{-1}}$
belongs to $\sigma_{u}^{2}\bar{H}_{m}$.
\end{prop}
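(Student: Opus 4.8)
The approach I would take is to cancel the part of the $u$-th row of $\alpha$ in the columns $v<u$ that does not yet lie in $\sigma_u^{2}\bar{H}_m$ by multiplying with a suitable product $\delta$ of the elementary elements of $IA^m$ from Propositions \ref{prop:type 1.1} and \ref{prop:type 1.2}, using the relation $A\vec{\sigma}=\vec{0}$ to reduce what has to be cancelled to a manageable module of Koszul syzygies.

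From \eqref{eq:reminder}, for $v<u$ we have $\bar{a}_{u,v}=\sigma_u b_{u,v}$ with
\[
b_{u,v}\in\sum_{r=1}^{u-1}\mathfrak{A}_{u}\,\sigma_r\bar{U}_{r,m}+\sigma_u^{2}\bar{U}_{u,m}+\mathfrak{A}_{u}\,\bar{O}_m+\bar{O}_m^{2}\subseteq\bar{H}_m,
\]
and the target ``$(u,v)$-entry of $\overline{\alpha\delta^{-1}}$ lies in $\sigma_u^{2}\bar{H}_m$'' amounts to $b_{u,v}\equiv(\text{correction from }\delta)\pmod{\sigma_u\bar{H}_m}$. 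Reducing first modulo $\sigma_uR_u$: writing $\beta_v=b_{u,v}\bmod\sigma_uR_u\in R_{u-1}$, the projection of $A\vec{\sigma}=\vec{0}$ gives $\sum_{v<u}\beta_v\sigma_v=0$; since $\sigma_1,\dots,\sigma_{u-1}$ is a regular sequence in $R_{u-1}$ (Koszul exactness), $(\beta_v)_v=\sum_{a<b<u}c_{ab}(\sigma_a\vec{e}_b-\sigma_b\vec{e}_a)$, and since $\beta_v$ (being a Koszul boundary) lies in $\mathfrak{A}_{u-1}$ and also lies in the image of the displayed ideal, it lies in $\mathfrak{A}_{u-1}H_{u-1,m}$; an ideal-theoretic step (using $(H_{u-1,m}:\sigma_k)=H_{u-1,m}+\mu_{k,m}R_{u-1}$) then lets one take the coefficients $c_{ab}$ in $H_{u-1,m}=\sum_{k<u}(x_k^{m}-1)R_{u-1}+mR_{u-1}$.

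Now the lifting. Each summand $c\,(\sigma_a\vec{e}_b-\sigma_b\vec{e}_a)$ of $(\beta_v)_v$, with $c$ an $R_u$-multiple of $m$ or of some $x_k^{m}-1$ ($k<u$), is the $u$-th row of an element of $IA^m$ after multiplying by $\sigma_u$: namely $\bigl(I_n+\sigma_u c\,(\sigma_a\vec{e}_b-\sigma_b\vec{e}_a)\bigr)^{m}$ (Proposition \ref{prop:type 1.1}(1)) or the corresponding element of Proposition \ref{prop:type 1.1}(2). Each such matrix is the identity outside row $u$, and its $(u,a)$- and $(u,b)$-entries ($a,b<u$) are of the form $\sigma_u h$ with $h\in\sum_{k<u}\mathfrak{A}\sigma_kU_{k,m}+\mathfrak{A}O_m$, hence lie in $\tilde{J}_{m,u-1,a}$, resp.\ $\tilde{J}_{m,u-1,b}$; so the product $\delta_1$ of these lifts is in $IA^m\cap\mathbb{\tilde{\mathbb{J}}}_{m,u-1}$. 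As $\delta_1$ is unipotent and $\bar{a}_{u,u}$ as well as $\bar{d}_{u,v}$ (for $v<u$) lie in $\sigma_uR_u$, the $(u,v)$-entry of $\overline{\alpha\delta_1^{-1}}$ equals $\bar{a}_{u,v}-\bar{d}_{u,v}=\sigma_u(b_{u,v}-e^{(1)}_v)$ modulo $\sigma_u^{2}\bar{H}_m$, which is divisible by $\sigma_u^{2}$. One last adjustment — killing the residual $\mu_{u,m}$-component of the quotient $(b_{u,v}-e^{(1)}_v)/\sigma_u$ by an analogous product $\delta_2$ built from Proposition \ref{prop:type 1.2}(1) (whose $u$-th-row entries carry $\sigma_u^{2}\mu_{u,m}=\sigma_u(x_u^{m}-1)$) — puts that quotient into $\bar{H}_m$, so $\delta=\delta_1\delta_2$ does the job.

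The crux, and the main obstacle, is to nail down the two ``spanning'' steps: that the Koszul coefficients $c_{ab}$ can indeed be chosen inside $H_{u-1,m}$ (this is exactly what makes the syzygies liftable into $IA^m$), and that the residue handled by $\delta_2$ really can be cleared by Proposition \ref{prop:type 1.2}(1) while keeping $\delta\in\mathbb{\tilde{\mathbb{J}}}_{m,u-1}$; each of these is a finite but somewhat delicate ideal/index computation. The hypothesis $n\ge4$ is what guarantees that there are always enough indices distinct from $u$ both for the Koszul vectors $\sigma_a\vec{e}_b-\sigma_b\vec{e}_a$ and for the auxiliary indices in the elements of Proposition \ref{prop:type 1.2}; this is where an attempt with $n=3$ would break down.
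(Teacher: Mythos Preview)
Your overall strategy matches the paper's: reduce the $u$-th row modulo $\sigma_u$, use the relation $A\vec\sigma=0$ to recognise the residual vector as a Koszul syzygy in $R_{u-1}$, and then realise that syzygy as the $u$-th row of some $\delta\in IA^m\cap\tilde{\mathbb J}_{m,u-1}$ built from the elementary elements of Propositions~\ref{prop:type 1.1} and~\ref{prop:type 1.2}. The divergence is precisely at what you call ``the crux''.

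You assert that the Koszul coefficients $c_{ab}$ can be chosen in $H_{u-1,m}$. This is stronger than what the paper proves, and the colon identity $(H_{u-1,m}:\sigma_k)=H_{u-1,m}+\mu_{k,m}R_{u-1}$ is not by itself enough to force it: the obstruction to lifting a cycle in $H_{u-1,m}\cdot K_1$ to a chain in $H_{u-1,m}\cdot K_2$ is governed by $\mathrm{Tor}_2^{R_{u-1}}(\mathbb Z,\,R_{u-1}/H_{u-1,m})\cong H_2(\mathbb Z^{u-1};\mathbb Z_m)\neq 0$, so for \emph{arbitrary} syzygies with entries in $H_{u-1,m}$ the lift fails. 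Your $\beta_v$'s lie in a more restrictive ideal than $H_{u-1,m}$, so the claim might still be salvageable, but it is a genuine and delicate statement that you have not established. Concretely, the paper's own decomposition uses coefficients of the form $\sigma_k\mu_{i,m}$ with $k\neq i$, and such elements do \emph{not} lie in $H_{u-1,m}$ (e.g.\ $\sigma_2\mu_{1,m}\notin H_{2,m}$ already for $m=2$); so if your claim is true it is not for the reason the paper's explicit computation suggests.

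The paper sidesteps this issue entirely. Rather than forcing $c_{ab}\in H_{u-1,m}$, it enlarges the admissible generating set to include the vectors $\sigma_k\mu_{i,m}(\sigma_i\vec e_j-\sigma_j\vec e_i)$ of Proposition~\ref{prop:type 1.1}(3), and (after multiplication by $\sigma_u$) the vectors $\sigma_u\sigma_j\mu_{i,m}(\sigma_i\vec e_j-\sigma_j\vec e_i)$ of Proposition~\ref{prop:type 1.2}(2) --- precisely the elements you do not invoke. It then proves, by an explicit induction on the column index, that $(\bar{\bar b}_{u,1},\dots,\bar{\bar b}_{u,u-1},0,\dots,0)$ lies in the $R_{u-1}$-span of this larger set. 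This avoids any Tor or colon-ideal subtlety.

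A second, smaller point: your correction $\delta_2$ via Proposition~\ref{prop:type 1.2}(1) is unnecessary. In the paper's decomposition $\bar b_{u,v}=\sigma_u\bar c_{u,v}+\bar{\bar b}_{u,v}$ one already has $\bar c_{u,v}\in\bar H_m$; so once $\delta$ cancels $\sigma_u\bar{\bar b}_{u,v}$, the residual $(u,v)$-entry is $\sigma_u^{2}\bar c_{u,v}-\sigma_u\bar{\bar b}_{u,v}\bar a_{u,u}\in\sigma_u^{2}\bar H_m$ on the nose, with no ``$\mu_{u,m}$-component'' left to kill.
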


\begin{proof}
So let $\alpha=I_{n}+A\in\mathbb{\mathbb{\tilde{J}}}_{m,u-1}$, and
observe that for every $1\leq v\leq u-1$ one can write $\bar{a}_{u,v}=\sigma_{u}\bar{b}_{u,v}$
for some $\bar{b}_{u,v}\in\sum_{r=1}^{u-1}\mathfrak{\bar{A}}\sigma_{r}\bar{U}_{r,m}+\sigma_{u}^{2}\bar{U}_{u,m}+\mathfrak{\bar{A}}\bar{O}_{m}+\bar{O}_{m}^{2}$.
In addition, as it is easy to see that
\[
\sum_{r=1}^{u-1}\mathfrak{\bar{A}}\sigma_{r}\bar{U}_{r,m}=\sum_{r=1}^{u-1}(\sigma_{u}R_{u}+\bar{\bar{\mathfrak{A}}})\sigma_{r}(\sigma_{u}\bar{U}_{r,m}+\bar{\bar{U}}_{r,m})\subseteq\sigma_{u}\sum_{r=1}^{u-1}\sigma_{r}\bar{U}_{r,m}+\sum_{r=1}^{u-1}\bar{\bar{\mathfrak{A}}}\sigma_{r}\bar{\bar{U}}_{r,m}
\]
\[
\mathfrak{\bar{A}}\bar{O}_{m}+\bar{O}_{m}^{2}=(\sigma_{u}R_{u}+\bar{\bar{\mathfrak{A}}})(\sigma_{u}\bar{O}_{m}+\bar{\bar{O}}_{m})+(\sigma_{u}\bar{O}_{m}+\bar{\bar{O}}_{m})^{2}\subseteq\sigma_{u}\bar{O}_{m}+\bar{\bar{\mathfrak{A}}}\bar{\bar{O}}_{m}+\bar{\bar{O}}_{m}^{2}
\]
one can write $\bar{b}_{u,v}=\sigma_{u}\bar{c}_{u,v}+\bar{\bar{b}}_{u,v}$
for every $1\leq v\leq u-1$, for some 
\begin{eqnarray*}
\bar{\bar{b}}_{u,v} & \in & \sum_{r=1}^{u-1}\bar{\bar{\mathfrak{A}}}\sigma_{r}\bar{\bar{U}}_{r,m}+\bar{\bar{\mathfrak{A}}}\bar{\bar{O}}_{m}+\bar{\bar{O}}_{m}^{2}\\
\bar{c}_{u,v} & \in & \sum_{r=1}^{u-1}\sigma_{r}\bar{U}_{r,m}+\sigma_{u}\bar{U}_{u,m}+\bar{O}_{m}=\bar{H}_{m}.
\end{eqnarray*}

Notice, that as $A$ satisfies the condition $A\vec{\sigma}=\vec{0}$
we have the equality $\sigma_{1}a_{u,1}+\ldots+\sigma_{n}a_{u,n}=0$,
which yields the following equalities as well:
\begin{eqnarray*}
\sigma_{1}\bar{a}_{u,1}+\ldots+\sigma_{u-1}\bar{a}_{u,u-1}+\sigma_{u}\bar{a}_{u,u} & = & 0\\
 & \Downarrow\\
\sigma_{1}\bar{b}_{u,1}+\ldots+\sigma_{u-1}\bar{b}_{u,u-1}+\bar{a}_{u,u} & = & 0\\
 & \Downarrow\\
\sigma_{1}\bar{\bar{b}}_{u,1}+\ldots+\sigma_{u-1}\bar{\bar{b}}_{u,u-1} & = & 0.
\end{eqnarray*}

Observe now that for every $1\leq v\leq u-1$ we have
\[
\sigma_{u}\bar{\bar{b}}_{u,v}\in\sigma_{u}\left(\sum_{r=1}^{u-1}\bar{\bar{\mathfrak{A}}}\sigma_{r}\bar{\bar{U}}_{r,m}+\bar{\bar{\mathfrak{A}}}\bar{\bar{O}}_{m}+\bar{\bar{O}}_{m}^{2}\right)\subseteq\tilde{J}_{m,u-1,v}
\]
and thus, if we define
\[
\delta=\left(\begin{array}{ccccc}
 & I_{u-1} &  & 0 & 0\\
\sigma_{u}\bar{\bar{b}}_{u,1} & \cdots & \sigma_{u}\bar{\bar{b}}_{u,u-1} & 1 & 0\\
 & 0 &  & 0 & I_{n-u}
\end{array}\right)\leftarrow u\textrm{-th}\,\,\,\,\textrm{row}
\]
then $\delta\in\tilde{\mathbb{\mathbb{J}}}_{m,u-1}$. We claim now
that we also have $\delta\in IA^{m}$. We will prove this claim soon,
but assuming this claim, we can now multiply $\alpha$ from the right
by $\delta^{-1}\in\tilde{\mathbb{\mathbb{J}}}_{m,u-1}\cap IA^{m}$
and obtain an element in $\mathbb{\mathbb{\tilde{J}}}_{m,u-1}$ such
that the image of its $\left(u,v\right)$-th entry for $1\leq v\leq u-1$,
under the projection $x_{u+1},\ldots,x_{n}\mapsto1$, is
\begin{eqnarray*}
\bar{a}_{u,v}-\sigma_{u}\bar{\bar{b}}_{u,v}\left(1+\bar{a}_{u,u}\right) & = & \sigma_{u}^{2}\bar{c}_{u,v}-\sigma_{u}\bar{\bar{b}}_{u,v}\bar{a}_{u,u}\\
 & \in & \sigma_{u}^{2}\bar{H}_{m}+\sigma_{u}^{2}\left(\sum_{r=1}^{u-1}\bar{\bar{\mathfrak{A}}}\sigma_{r}\bar{\bar{U}}_{r,m}+\bar{\bar{\mathfrak{A}}}\bar{\bar{O}}_{m}+\bar{\bar{O}}_{m}^{2}\right)\\
 & = & \sigma_{u}^{2}\bar{H}_{m}
\end{eqnarray*}
as required.
\end{proof}
So it remains to prove the following claim:
\begin{claim}
\label{claim:}Let $n\geq4$, $1\leq u\leq n$, and $\bar{\bar{b}}_{u,v}\in\sum_{r=1}^{u-1}\bar{\bar{\mathfrak{A}}}\sigma_{r}\bar{\bar{U}}_{r,m}+\bar{\bar{\mathfrak{A}}}\bar{\bar{O}}_{m}+\bar{\bar{O}}_{m}^{2}$
for $1\leq v\leq u-1$ which satisfy the condition
\begin{equation}
\sigma_{1}\bar{\bar{b}}_{u,1}+\ldots+\sigma_{u-1}\bar{\bar{b}}_{u,u-1}=0.\label{eq:condition--}
\end{equation}
Then
\[
u\textrm{-th\,\,\,\,row}\rightarrow\left(\begin{array}{ccccc}
 & I_{u-1} &  & 0 & 0\\
\sigma_{u}\bar{\bar{b}}_{u,1} & \cdots & \sigma_{u}\bar{\bar{b}}_{u,u-1} & 1 & 0\\
 & 0 &  & 0 & I_{n-u}
\end{array}\right)\in IA^{m}.
\]
\end{claim}

\begin{proof}
It will be easier to prove a bit more - we will prove that if for
every $1\leq v\leq u-1$
\[
\bar{\bar{b}}_{u,v}\in\sum_{v\neq r=1}^{u-1}\bar{\bar{\mathfrak{A}}}\sigma_{r}\bar{\bar{U}}_{r,m}+\bar{\bar{\mathfrak{A}}}^{2}\bar{\bar{U}}_{v,m}+\bar{\bar{O}}_{m}
\]
then the vector $\vec{b}=(\bar{\bar{b}}_{u,1},\ldots,\bar{\bar{b}}_{u,u-1},0,\ldots,0)$
is a linear combination of the vectors
\[
\left\{ \begin{array}{c}
\sigma_{k}\mu_{k,m}\left(\sigma_{i}\vec{e}_{j}-\sigma_{j}\vec{e}_{i}\right)\\
\sigma_{k}\mu_{i,m}\left(\sigma_{i}\vec{e}_{j}-\sigma_{j}\vec{e}_{i}\right)
\end{array},m\left(\sigma_{i}\vec{e}_{j}-\sigma_{j}\vec{e}_{i}\right)\,|\,i,j,k\leq u-1,\,i\neq j\right\} 
\]
with coefficients in $R_{u-1}.$ This will show that $\sigma_{u}(\bar{\bar{b}}_{u,1},\ldots,\bar{\bar{b}}_{u,u-1},0,\ldots,0)$
is a linear combination of the vectors in Propositions \ref{prop:type 1.1}
and \ref{prop:type 1.2}, so the claim will follow.

We start with expressing $\bar{\bar{b}}_{u,1}$ explicitly by writing
\[
\bar{\bar{b}}_{u,1}=\sum_{r=2}^{u-1}\sum_{i=1}^{u-1}\sigma_{i}\sigma_{r}\mu_{r,m}p_{i,r}+\sum_{i,j=1}^{u-1}\sigma_{i}\sigma_{j}\mu_{1,m}q_{i,j}+mr
\]
for some $p_{i,r},\,q_{i,j},\,r\in R_{u-1}$. Now, Equation \ref{eq:condition--}
gives that under the projection $\sigma_{2},\ldots,\sigma_{u-1}\mapsto0$,
$\bar{\bar{b}}_{u,1}\mapsto0$. It follows that $\bar{\bar{b}}_{u,1}\in\sum_{i=2}^{u-1}\sigma_{i}R_{u-1}\subseteq\bar{\bar{\mathfrak{A}}}$.
In particular, as obviously
\[
\sum_{r=2}^{u-1}\sum_{i=1}^{u-1}\sigma_{i}\sigma_{r}\mu_{r,m}p_{i,r}+\sum_{i,j=1}^{u-1}\sigma_{i}\sigma_{j}\mu_{1,m}q_{i,j}\in\bar{\bar{\mathfrak{A}}}
\]
we also have $mr\in\bar{\bar{\mathfrak{A}}}$ and hence $r\in\bar{\bar{\mathfrak{A}}}$.
Hence, we can write
\[
\bar{\bar{b}}_{u,1}=\sum_{r=2}^{u-1}\sum_{i=1}^{u-1}\sigma_{i}\sigma_{r}\mu_{r,m}p_{i,r}+\sum_{i,j=1}^{u-1}\sigma_{i}\sigma_{j}\mu_{1,m}q_{i,j}+\sum_{i=1}^{u-1}\sigma_{i}mr_{i}
\]
for some $p_{i,r},\,q_{i,j},\,r_{i}\in R_{u-1}$.

Observe now that by dividing $r_{1}$ by $\sigma_{2},\ldots,\sigma_{u-1}$
(with residue) we can write $r_{1}=r'_{1}+\sum_{i=2}^{u-1}\sigma_{i}r'_{i}$
where $r'_{1}$ depends only on $x_{1}$. Therefore, by replacing
$r_{1}$ by $r'_{1}$ and $r_{i}$ by $r_{i}+\sigma_{1}r'_{i}$ for
$2\leq i\leq n$, we can assume that $r_{1}$ depends only on $x_{1}$.
Similarly, by dividing $q_{1,1}$ by $\sigma_{2},\ldots,\sigma_{u-1}$,
we can assume that $q_{1,1}$ depends only on $x_{1}$. Now, by replacing
$\vec{b}$ with
\begin{eqnarray*}
\vec{b} & - & \sum_{r=2}^{u-1}\sum_{i=1}^{u-1}\sigma_{i}\mu_{r,m}p_{i,r}\left(\sigma_{r}\vec{e}_{1}-\sigma_{1}\vec{e}_{r}\right)\\
 & - & \sum_{i=2}^{u-1}\sum_{j=1}^{u-1}\sigma_{j}\mu_{1,m}q_{i,j}\left(\sigma_{i}\vec{e}_{1}-\sigma_{1}\vec{e}_{i}\right)-\sum_{j=2}^{u-1}\sigma_{1}\mu_{1,m}q_{1,j}\left(\sigma_{j}\vec{e}_{1}-\sigma_{1}\vec{e}_{j}\right)\\
 & - & \sum_{i=2}^{u-1}mr_{i}\left(\sigma_{i}\vec{e}_{1}-\sigma_{1}\vec{e}_{i}\right)
\end{eqnarray*}
we can assume that $\bar{\bar{b}}_{u,1}$ is a polynomial which depends
only on $x_{1}$. On the other hand, we already saw that Equation
\ref{eq:condition--} yields that $\bar{\bar{b}}_{u,1}\in\sum_{i=2}^{u-1}\sigma_{i}R_{u-1}$,
so we can actually assume that $\bar{\bar{b}}_{u,1}=0$.

We continue in this manner by induction. In the $1\leq v\leq u-1$
stage we assume that $\bar{\bar{b}}_{u,1}=\ldots\bar{\bar{b}}_{u,v-1}=0$.
Then we write
\[
\bar{\bar{b}}_{u,v}=\sum_{v\neq r=1}^{u-1}\sum_{i=1}^{u-1}\sigma_{i}\sigma_{r}\mu_{r,m}p_{i,r}+\sum_{i,j=1}^{u-1}\sigma_{i}\sigma_{j}\mu_{v,m}q_{i,j}+mr
\]
for some $p_{i,r},\,q_{i,j},\,r\in R_{u-1}$. The condition $\bar{\bar{b}}_{u,1}=\ldots=\bar{\bar{b}}_{u,v-1}=0$
and Equation \ref{eq:condition--} give that $\sigma_{v}\bar{\bar{b}}_{u,v}+\sigma_{v+1}\bar{\bar{b}}_{u,v+1}+\ldots+\sigma_{u-1}\bar{\bar{b}}_{u,u-1}=0$
and thus, under the projection $\sigma_{v+1},\ldots,\sigma_{u-1}\mapsto0$,
$\bar{\bar{b}}_{u,v}\mapsto0$, so $\bar{\bar{b}}_{u,v}\in\sum_{i=v+1}^{u-1}\sigma_{i}R_{u-1}\subseteq\bar{\bar{\mathfrak{A}}}$.
In particular, $r\in\bar{\bar{\mathfrak{A}}}$, so we can write
\[
\bar{\bar{b}}_{u,v}=\sum_{v\neq r=1}^{u-1}\sum_{i=1}^{u-1}\sigma_{i}\sigma_{r}\mu_{r,m}p_{i,r}+\sum_{i,j=1}^{u-1}\sigma_{i}\sigma_{j}\mu_{v,m}q_{i,j}+\sum_{i=1}^{u-1}\sigma_{i}mr_{i}
\]
for some $p_{i,r},\,q_{i,j},\,r_{i}\in R_{u-1}$.

Now, as we explained previously, by dividing $p_{i,r},\,q_{i,j},\,r_{i}$
for $1\leq i,j,r\leq v$ by $\sigma_{v+1},\ldots,\sigma_{u-1}$, we
can assume that these polynomials depend only on $x_{1},\ldots,x_{v}$.
Thus, by replacing $\vec{b}$ with
\begin{eqnarray*}
\vec{b} & - & \sum_{r=v+1}^{u-1}\sum_{i=1}^{u-1}\sigma_{i}\mu_{r,m}p_{i,r}\left(\sigma_{r}\vec{e}_{v}-\sigma_{v}\vec{e}_{r}\right)-\sum_{r=1}^{v-1}\sum_{i=v+1}^{u-1}\sigma_{r}\mu_{r,m}p_{i,r}\left(\sigma_{i}\vec{e}_{v}-\sigma_{v}\vec{e}_{i}\right)\\
 & - & \sum_{i=v+1}^{u-1}\sum_{j=1}^{u-1}\sigma_{j}\mu_{v,m}q_{i,j}\left(\sigma_{i}\vec{e}_{v}-\sigma_{v}\vec{e}_{i}\right)-\sum_{i=1}^{v}\sum_{j=v+1}^{u-1}\sigma_{i}\mu_{v,m}q_{i,j}\left(\sigma_{j}\vec{e}_{v}-\sigma_{v}\vec{e}_{j}\right)\\
 & - & \sum_{i=v+1}^{u-1}mr_{i}\left(\sigma_{i}\vec{e}_{v}-\sigma_{v}\vec{e}_{i}\right)
\end{eqnarray*}
we can assume that $\bar{\bar{b}}_{u,v}$ is a polynomial which depends
only on $x_{1},\ldots,x_{v}$, without changing the assumption that
$\bar{\bar{b}}_{u,w}=0$ for $w<v$. But we saw that in this situation
Equation \ref{eq:condition--} yields that $\bar{\bar{b}}_{u,v}\in\sum_{i=v+1}^{u-1}\sigma_{i}R_{u-1}$,
so we can actually assume that $\bar{\bar{b}}_{u,v}=0$, as required.
\end{proof}
This finishes the proof of Proposition \ref{prop:step1}. Here is
the second step of the technical lemma's proof:
\begin{prop}
Let $\alpha=I_{n}+A\in\mathbb{\tilde{\mathbb{J}}}_{m,u-1}$ such that
for every $v<u$, $\bar{a}_{u,v}\in\sigma_{u}^{2}\bar{H}_{m}$. Then,
there exists $\delta\in IA^{m}\cap\mathbb{\mathbb{\tilde{J}}}_{m,u-1}$
such that for every $v\neq u$, the $\left(u,v\right)$-th entry of
$\overline{\alpha\delta^{-1}}$ belongs to $\sigma_{u}^{2}\bar{H}_{m}$.
\end{prop}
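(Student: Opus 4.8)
The plan is to build $\delta$ as a product of explicit elements of $IA^{m}\cap\tilde{\mathbb{J}}_{m,u-1}$, one batch per column $v=u+1,\dots,n$, arranged so that — after the projection $x_{u+1},\dots,x_{n}\mapsto1$ — the batch for column $v$ alters only the $(u,v)$-entry. Fixing such a $v$, write $\bar a_{u,v}=\sigma_{u}\bar b_{u,v}$, which by \ref{eq:reminder} is legitimate with $\bar b_{u,v}$ in the ideal $T=\sum_{r<u}\mathfrak{\bar{A}}\sigma_{r}\bar U_{r,m}+\sigma_{u}^{2}\bar U_{u,m}+\mathfrak{\bar{A}}\bar O_{m}+\bar O_{m}^{2}$ of $R_{u}$. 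Splitting $\mathfrak{\bar{A}}=\sigma_{u}R_{u}+\sum_{i<u}\sigma_{i}R_{u}$ and $R_{u}=\mathbb{Z}+\mathfrak{\bar{A}}$ and discarding the terms already lying in $\sigma_{u}\bar H_{m}$, one sees that modulo $\sigma_{u}\bar H_{m}$ the element $\bar b_{u,v}$ is a sum of: (i) a piece in $\sum_{i,r<u}\sigma_{i}\sigma_{r}\mu_{r,m}R_{u}$; (ii) a piece in $\sum_{i<u}\sigma_{i}mR_{u}$; and (iii) a purely integral piece $c_{v}m^{2}$ with $c_{v}\in\mathbb{Z}$. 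So it suffices to produce, for each of (i), (ii), (iii), an element of $IA^{m}\cap\tilde{\mathbb{J}}_{m,u-1}$ realizing, in its $(u,v)$-entry modulo $\sigma_{u}^{2}\bar H_{m}$, exactly $\sigma_{u}$ times that piece, with every other non-trivial entry either vanishing under the bar or already lying in $\sigma_{u}^{2}\bar H_{m}$; the stray term $c_{v}\sigma_{u}m^{2}\bar a_{u,u}$ coming from such a right-multiplication is harmless, since the hypothesis on the columns $v'<u$ together with $A\vec\sigma=\vec0$ forces $\bar a_{u,u}\in\sigma_{u}\bar H_{m}$.

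Parts (i) and (ii) are routine, using the elements of \S\ref{sec:elementary}. A term $\sigma_{i}\sigma_{r}\mu_{r,m}g$ with $i,r<u$ is the $(u,v)$-entry of the type-$2$ element of Proposition \ref{prop:type 1.1} attached to the pair $(i,v)$, third index $r$, and coefficient $\sigma_{u}g$; its only other non-trivial entry sits in column $i<u$, carries a factor $\sigma_{v}$, hence dies under the bar, while membership in $\tilde{\mathbb{J}}_{m,u-1}$ is immediate because $\sigma_{u}\in\mathfrak{\tilde{A}}_{u-1}$ and $\sigma_{i}\sigma_{r}\mu_{r,m}g\in\mathfrak{A}\sigma_{r}U_{r,m}$ (the companion entry lying in $\mathfrak{\tilde{A}}_{u-1}\cdot\mathfrak{A}\sigma_{r}U_{r,m}$). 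Part (ii) is handled identically with the type-$1$ element of Proposition \ref{prop:type 1.1} and coefficient $\sigma_{u}g$. Right-multiplying $\alpha$ by a product of these removes (i) and (ii) from $\bar a_{u,v}$, leaving $\bar a_{u,v}\equiv c_{v}\sigma_{u}m^{2}\pmod{\sigma_{u}^{2}\bar H_{m}}$, and — since after the bar each such factor alters only column $v$ — leaving all columns $v'\neq v$ untouched.

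The crux is (iii). No element supported in row $u$ can realize $c_{v}\sigma_{u}m^{2}$, because the $(u,v)$-entry of such an element is forced to carry a factor $\sigma_{i}$ with $i\neq u$; so I would instead use the automorphism $D=I_{n}+\sigma_{v}E_{u,u}-\sigma_{u}E_{u,v}\in IA(\Phi_{n})$ (identity except that row $u$ reads $x_{v}$ in column $u$ and $-\sigma_{u}$ in column $v$) and set $\delta_{v}=D^{m^{2}c_{v}}=(D^{mc_{v}})^{m}\in IA^{m}$. An easy induction gives that $D^{N}$ is the identity except for its $(u,u)$-entry $x_{v}^{N}$ and its $(u,v)$-entry $-\sigma_{u}\mu_{v,N}$; hence $\det D^{N}=x_{v}^{N}$ has the required shape, and under the bar $D^{N}\mapsto I_{n}-\sigma_{u}NE_{u,v}$, so right-multiplication by $\delta_{v}$ deletes exactly $c_{v}\sigma_{u}m^{2}$ from $\bar a_{u,v}$ and changes nothing else after the bar. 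The step needing care — and the main obstacle — is verifying $D^{m^{2}c_{v}}\in\tilde{\mathbb{J}}_{m,u-1}$, i.e.\ that $\sigma_{v}\mu_{v,m^{2}c_{v}}\in\tilde J_{m,u-1,u}$ and $\sigma_{u}\mu_{v,m^{2}c_{v}}\in\tilde J_{m,u-1,v}$. This follows from $\mu_{v,m^{2}c_{v}}=\mu_{v,m}\cdot\big(\sum_{k=0}^{mc_{v}-1}x_{v}^{mk}\big)$ together with $\mu_{v,m}\equiv m\pmod{\sigma_{v}}$: one obtains a presentation $\mu_{v,m^{2}c_{v}}=m^{2}c_{v}+m\sigma_{v}p+\sigma_{v}^{2}\mu_{v,m}q$ with $p,q\in R_{n}$, and its three summands, after the appropriate multiplication by $\sigma_{u}$ (for column $v$) or $\sigma_{v}$ (for column $u$), land in one of $\mathfrak{\tilde{A}}_{u-1}O_{m}^{2}$, $\mathfrak{\tilde{A}}_{u-1}\mathfrak{A}O_{m}$, $\mathfrak{A}\sigma_{v}^{2}U_{v,m}$, $\sigma_{v}^{3}U_{v,m}$ — each contained in the relevant $\tilde J$-ideal.

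Finally, let $\delta^{-1}$ be the product, over $v=u+1,\dots,n$, of all the corrections for (i), (ii), (iii); the order is irrelevant, because after the bar the batch for column $v$ alters only column $v$, so the columns $v'<u$ stay in $\sigma_{u}^{2}\bar H_{m}$ by hypothesis and the already-treated columns $v'>u$ stay fixed. Then $\delta\in IA^{m}\cap\tilde{\mathbb{J}}_{m,u-1}$ and the $(u,v)$-entry of $\overline{\alpha\delta^{-1}}$ lies in $\sigma_{u}^{2}\bar H_{m}$ for every $v\neq u$, which is what was claimed.
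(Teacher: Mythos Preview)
Your proof is correct and follows essentially the same route as the paper's: for each column $v>u$ you split $\bar b_{u,v}$ into a part killed by Proposition~\ref{prop:type 1.1} elements (your (i) and (ii), which the paper bundles into a single $\delta_{v}$) and a residual integer multiple of $m^{2}$ removed by the power $D^{m^{2}c_{v}}$ of the automorphism $D=I_{n}+\sigma_{v}E_{u,u}-\sigma_{u}E_{u,v}$, with the same ideal decomposition $\mu_{v,m^{2}}\in\sigma_{v}^{2}U_{v,m}+\sigma_{v}O_{m}+O_{m}^{2}$ used to verify membership in $\tilde{\mathbb{J}}_{m,u-1}$. The only cosmetic difference is that the paper takes $\delta'=D^{m^{2}}$ first and then raises it to the $s$-th power (so the sign of $s$ never enters the ideal check), whereas you check $D^{m^{2}c_{v}}$ directly; since $\tilde{\mathbb{J}}_{m,u-1}\cap IA^{m}$ is a group, either formulation works.
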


\begin{proof}
So let $\alpha=I_{n}+A\in\mathbb{\tilde{\mathbb{J}}}_{m,u-1}$ such
that for every $v<u$, , $\bar{a}_{u,v}\in\sigma_{u}^{2}\bar{H}_{m}$.
We remined that by Equation \ref{eq:reminder}, for every $v>u$ we
have $\bar{a}_{u,v}\in\sigma_{u}\left(\sum_{r=1}^{u-1}\mathfrak{\bar{A}}\sigma_{r}\bar{U}_{r,m}+\sigma_{u}^{2}\bar{U}_{u,m}+\mathfrak{\bar{A}}\bar{O}_{m}+\bar{O}_{m}^{2}\right)$.
Hence, we can write explicitly
\[
\bar{a}_{u,v}=\sigma_{u}\left(\sum_{r=1}^{u-1}\sum_{i=1}^{u}\sigma_{i}\sigma_{r}\mu_{r,m}p_{r,i}+\sigma_{u}^{2}\mu_{u,m}q+\sum_{i=1}^{u}m\sigma_{i}r_{i}+m^{2}s\right)
\]
for some $p_{r,i},\,q,\,r_{i},\,s\in R_{u}$. Clearly, as $\mathfrak{\bar{A}}\bar{O}_{m}\supseteq\mathfrak{\bar{A}}\bar{O}_{m}^{2}$,
by dividing $s$ by $\sigma_{i}$ for $1\leq i\leq u$ (with residue),
we can assume that $s\in\mathbb{Z}$. Consider now the following element:
\[
IA^{m}\ni\left(I_{n}+\sigma_{v}E_{u,u}-\sigma_{u}E_{u,v}\right)^{m^{2}}=I_{n}+\sigma_{v}\mu_{v,m^{2}}E_{u,u}-\sigma_{u}\mu_{v,m^{2}}E_{u,v}=\delta'.
\]
By the computation in the proof of Proposition \ref{prop:reduction1},
we obtain that
\[
\mu_{v,m^{2}}\in\sigma_{v}^{2}U_{v,m}+\sigma_{v}O_{m}+O_{m}^{2}
\]
and thus (we remind that $v>u$)
\begin{eqnarray*}
\sigma_{v}\mu_{v,m^{2}} & \in & \sigma_{v}\left(\sigma_{v}^{2}U_{v,m}+\sigma_{v}O_{m}+O_{m}^{2}\right)\subseteq\tilde{J}_{m,u-1,u}\\
\sigma_{u}\mu_{v,m^{2}} & \in & \sigma_{u}\left(\sigma_{v}^{2}U_{v,m}+\sigma_{v}O_{m}+O_{m}^{2}\right)\subseteq\tilde{J}_{m,u-1,v}.
\end{eqnarray*}
In addition, the determinant of $\delta'$ is $x_{v}^{m^{2}}$. Therefore,
$\delta'\in\mathbb{\mathbb{\tilde{J}}}_{m,u-1}$. Observe now that
as $v>u$, under the projection $\sigma_{u+1},\ldots,\sigma_{n}\mapsto0$,
$x_{v}\mapsto1$, and $\delta$ is therefore maped to
\[
\bar{\delta}'=I_{n}-m^{2}\sigma_{u}E_{u,v}.
\]
Thus, if we multiply $\alpha$ from the right by $\delta'^{s}$ we
obtain that the value of the entries in the $u$-th row under the
projection $\sigma_{u+1},\ldots,\sigma_{n}\mapsto0$ does not change,
besides the value of the entry in the $v$-th colmun, which changes
to (see Equation \ref{eq:reminder} for the ideal which contains $\bar{a}_{u,u}$)
\begin{eqnarray*}
\bar{a}_{u,v}-sm^{2}\sigma_{u}\left(1+\bar{a}_{u,u}\right) & \in & \sigma_{u}\left(\sum_{r=1}^{u-1}\mathfrak{\bar{A}}\sigma_{r}\bar{U}_{r,m}+\sigma_{u}^{2}\bar{U}_{u,m}+\mathfrak{\bar{A}}\bar{O}_{m}\right)\\
 &  & +\,\sigma_{u}^{2}\left(\sum_{r=1}^{u}\mathfrak{\bar{A}}\sigma_{r}\bar{U}_{r,m}+\bar{\mathfrak{A}}\bar{O}_{m}+\bar{O}_{m}^{2}\right)\\
 & = & \sigma_{u}\left(\sum_{r=1}^{u-1}\mathfrak{\bar{A}}\sigma_{r}\bar{U}_{r,m}+\sigma_{u}^{2}\bar{U}_{u,m}+\mathfrak{\bar{A}}\bar{O}_{m}\right).
\end{eqnarray*}
Hence, we can assume that $\bar{a}_{u,v}\in\sigma_{u}\sum_{i=1}^{u-1}\sigma_{i}f_{i}+\sigma_{u}^{2}\left(\sum_{r=1}^{u}\sigma_{r}\bar{U}_{r,m}+\bar{O}_{m}\right)=\sigma_{u}\sum_{i=1}^{u-1}\sigma_{i}f_{i}+\sigma_{u}^{2}\bar{H}_{m}$,
for some $f_{i}\in\sum_{r=1}^{u-1}\sigma_{r}\bar{U}_{r,m}+\bar{O}_{m}$.
Define now (the coefficient of $\vec{e}_{v}$ is the value of the
$\left(u,v\right)$-th entry)
\[
\delta_{v}=\left(\begin{array}{ccccc}
 & I_{u-1} &  & 0 & 0\\
-\sigma_{v}\sigma_{u}f_{1} & \cdots & -\sigma_{v}\sigma_{u}f_{u-1} & 1 & \left(\sigma_{u}\sum_{i=1}^{u-1}\sigma_{i}f_{i}\right)\vec{e}_{v}\\
 & 0 &  & 0 & I_{n-u}
\end{array}\right)\in\mathbb{\mathbb{\tilde{J}}}_{m,u-1}.
\]
By proposition \ref{prop:type 1.1}, we obviously have $\delta_{v}\in IA^{m}$.
In addition, as $v>u$, under the projection $\sigma_{u+1},\ldots,\sigma_{n}\mapsto0$
we have
\[
\bar{\delta}_{v}=\left(\begin{array}{ccc}
I_{u-1} & 0 & 0\\
0 & 1 & \sigma_{u}\left(\sum_{i=1}^{u-1}\sigma_{i}f_{i}\right)\vec{e}_{v}\\
0 & 0 & I_{n-u}
\end{array}\right).
\]

Thus, by multiplying $\alpha$ from the right by $\bar{\delta}_{v}^{-1}$
we obtain that the value of the entries in the $u$-th row under the
projection $\sigma_{u+1},\ldots,\sigma_{n}\mapsto0$ does not change,
besides the value of the entry in the $v$-th colmun, which changes
to
\begin{eqnarray*}
\bar{a}_{u,v}-\sigma_{u}\left(\sum_{i=1}^{u-1}\sigma_{i}f_{i}\right)\left(1+\bar{a}_{u,u}\right) & \in & \sigma_{u}^{2}\bar{H}_{m}+\sigma_{u}^{2}\left(\sum_{r=1}^{u}\mathfrak{\bar{A}}\sigma_{r}\bar{U}_{r,m}+\mathfrak{A}\bar{O}_{m}+\bar{O}_{m}^{2}\right)\\
 & = & \sigma_{u}^{2}\bar{H}_{m}.
\end{eqnarray*}
Thus, defininig $\delta=\prod_{v=u+1}^{n}\delta_{v}$ finishes the
proof of the proposition, and hence, also the proof of the technical
lemma.
\end{proof}

\subsection{\label{subsec:Finishing}Finishing Lemma \ref{thm:stage 1}'s proof}

We remind that we fixed a constant $1\leq u\leq n$. We remind also
that by Corollary \ref{cor:reduction}, it suffices to show that given
$\alpha\in\mathbb{\tilde{\mathbb{J}}}_{m,u-1}$ there exist $\beta\in IA^{m}\cap\mathbb{J}_{m,u}$
and $\gamma\in ISL_{n-1,u}\left(\sigma_{u}H_{m}\right)\cap\mathbb{\mathbb{J}}_{m,u}$
such that $\gamma\alpha\beta\in\tilde{\mathbb{A}}{}_{u}$. 

So let $\alpha=I_{n}+A\in\mathbb{\mathbb{\tilde{J}}}_{m,u-1}$. By
the above technical lemma, there exists $\delta\in IA^{m}\cap\mathbb{\mathbb{\tilde{J}}}_{m,u-1}\subseteq IA^{m}\cap\mathbb{J}_{m,u}$
such that for every $v\neq u$, the $\left(u,v\right)$-th entry of
$\overline{\alpha\delta^{-1}}$ belongs to $\sigma_{u}^{2}\bar{H}_{m}$.
Thus, by replacing $\alpha$ with $\alpha\delta^{-1}$, with out loss
of generality, we can assume that we have $\bar{a}_{u,v}\in\sigma_{u}^{2}\bar{H}_{m}$
for every $v\neq u$. I.e. for every $v\neq u$ one can write $\bar{a}_{u,v}=\sigma_{u}^{2}\bar{b}_{u,v}$
for some $\bar{b}_{u,v}\in\bar{H}_{m}$. 

Now, for every $v\neq u$ define the matrix
\[
\delta_{v}=I_{n}+\left(\begin{array}{c}
\sigma_{1}\bar{b}_{u,v}\left(\sigma_{v}\vec{e}_{u}-\sigma_{u}\vec{e}_{v}\right)\\
\sigma_{2}\bar{b}_{u,v}\left(\sigma_{v}\vec{e}_{u}-\sigma_{u}\vec{e}_{v}\right)\\
\vdots\\
\sigma_{n}\bar{b}_{u,v}\left(\sigma_{v}\vec{e}_{u}-\sigma_{u}\vec{e}_{v}\right)
\end{array}\right)\in\mathbb{J}_{m,u}
\]
which is equals, by direct computation, to the product of the matrices
\[
\mathbb{J}_{m,u}\ni\varepsilon_{v,k}=I_{n}+\left(\begin{array}{c}
0\\
\sigma_{k}\bar{b}_{u,v}\left(\sigma_{v}\vec{e}_{u}-\sigma_{u}\vec{e}_{v}\right)\\
0
\end{array}\right)\leftarrow k\textrm{-th\,\,\ row}
\]
for $k\neq u,v$ and the matrix (the following is an example for $v>u$)
\[
\mathbb{J}_{m,u}\ni\eta_{v}=I_{n}+\left(\begin{array}{c}
0\\
\sigma_{u}\bar{b}_{u,v}\left(\sigma_{v}\vec{e}_{u}-\sigma_{u}\vec{e}_{v}\right)\\
0\\
\sigma_{v}\bar{b}_{u,v}\left(\sigma_{v}\vec{e}_{u}-\sigma_{u}\vec{e}_{v}\right)\\
0
\end{array}\right)\begin{array}{c}
\leftarrow u\textrm{-th\,\,\ row}\\
\\
\leftarrow v\textrm{-th\,\,\ row}
\end{array}
\]
i.e. $\delta_{v}=\eta_{v}\cdot\prod_{u,v\neq k=1}^{n}\varepsilon_{v,k}$
(observe that the matrices $\varepsilon_{v,k}$ commute, so the product
is well defined). One can see that by Propositions \ref{prop:type 1.1}
and \ref{prop:type 1.2}, $\varepsilon_{v,k}\in IA^{m}$ for every
$k\neq u,v$. Moreover, by Proposition \ref{prop:type 2}, $\eta_{v}\in IA^{m}$.
Hence, $\delta_{v}\in IA^{m}\cap\mathbb{J}_{m,u}$. Now, as for every
$1\leq i\leq n$ we have $\sum_{j=1}^{n}a_{i,j}\sigma_{j}=0$ (by
the condition $A\vec{\sigma}=\vec{0}$), $\alpha\cdot\prod_{u\neq v=1}^{n}\delta_{v}$
is equals to
\[
\left[I_{n}+\left(\begin{array}{ccc}
a_{1,1} & \cdots & a_{1,n}\\
\vdots &  & \vdots\\
a_{n,1} & \cdots & a_{n,n}
\end{array}\right)\right]\prod_{u\neq v=1}^{n}\left[I_{n}+\left(\begin{array}{c}
\sigma_{1}\bar{b}_{u,v}\left(\sigma_{v}\vec{e}_{u}-\sigma_{u}\vec{e}_{v}\right)\\
\sigma_{2}\bar{b}_{u,v}\left(\sigma_{v}\vec{e}_{u}-\sigma_{u}\vec{e}_{v}\right)\\
\vdots\\
\sigma_{n}\bar{b}_{u,v}\left(\sigma_{v}\vec{e}_{u}-\sigma_{u}\vec{e}_{v}\right)
\end{array}\right)\right]
\]
\[
=I_{n}+\left(\begin{array}{ccc}
a_{1,1} & \cdots & a_{1,n}\\
\vdots &  & \vdots\\
a_{n,1} & \cdots & a_{n,n}
\end{array}\right)+\sum_{u\neq v=1}^{n}\left(\begin{array}{c}
\sigma_{1}\bar{b}_{u,v}\left(\sigma_{v}\vec{e}_{u}-\sigma_{u}\vec{e}_{v}\right)\\
\sigma_{2}\bar{b}_{u,v}\left(\sigma_{v}\vec{e}_{u}-\sigma_{u}\vec{e}_{v}\right)\\
\vdots\\
\sigma_{n}\bar{b}_{u,v}\left(\sigma_{v}\vec{e}_{u}-\sigma_{u}\vec{e}_{v}\right)
\end{array}\right).
\]
It is easy to see now that if we denote $\alpha\cdot\prod_{u\neq v=1}^{n}\delta_{v}=I_{n}+C$,
then for every $v\neq u$, $\bar{c}_{u,v}=0$, when $c_{i,j}$ is
the $\left(i,j\right)$-th entry of $C$. Hence, we also have
\[
\bar{c}_{u,u}\sigma_{u}=\sum_{v=1}^{n}\bar{c}_{u,v}\bar{\sigma}_{v}=0\,\,\,\,\Longrightarrow\,\,\,\,\bar{c}_{u,u}=0.
\]

Thus, we can write $\overline{\alpha\cdot\prod_{u\neq v=1}^{n}\delta_{v}}=I_{n}+\bar{C}$
when the matrix $\bar{C}$ has the following properties:
\begin{itemize}
\item The entries of the $u$-th row of $\bar{C}$ are all $0$.
\item As $a_{i,v}\in\tilde{J}_{m,u-1,v}$ for every $i,v$, by the computation
for Equation \ref{eq:reminder} we have $\bar{a}_{i,v}\in\sigma_{u}\left(\sum_{r=1}^{u-1}\mathfrak{\bar{A}}\sigma_{r}\bar{U}_{r,m}+\sigma_{u}^{2}\bar{U}_{u,m}+\mathfrak{\bar{A}}\bar{O}_{m}+\bar{O}_{m}^{2}\right)$
for every $i,v\neq u$. Hence, for every $i,v\neq u$ we have
\begin{eqnarray*}
\bar{c}_{i,v} & \in & \sigma_{u}\left(\sum_{r=1}^{u-1}\mathfrak{\bar{A}}\sigma_{r}\bar{U}_{r,m}+\sigma_{u}^{2}\bar{U}_{u,m}+\mathfrak{\bar{A}}\bar{O}_{m}+\bar{O}_{m}^{2}\right)+\sigma_{u}\mathfrak{\bar{A}}\bar{H}_{m}\\
 & = & \sigma_{u}\left(\mathfrak{\bar{A}}\bar{H}_{m}+\bar{O}_{m}^{2}\right).
\end{eqnarray*}
\end{itemize}
Now, as $\det(\delta_{v})=1$ for every $v\neq u$, $\det(\overline{\alpha\cdot\prod_{u\neq v=1}^{n}\delta_{v}})=\det(\overline{\alpha})=\prod_{i=1}^{u}x_{i}^{s_{i}m^{2}}$.
However, as the entries of $\bar{C}$ have the above properties, this
determinant is mapped to $1$ under the projection $\sigma_{u}\mapsto0$.
Thus, $\det(\overline{\alpha\cdot\prod_{u\neq v=1}^{n}\delta_{v}})$
is of the form $x_{u}^{s_{u}m^{2}}$. Now, set $i_{0}\neq u$, and
denote
\[
\zeta=I_{n}+\sigma_{u}\mu_{u,m^{2}}E_{i_{0},i_{0}}-\sigma_{i_{0}}\mu_{u,m^{2}}E_{i_{0},u}=\left(I_{n}+\sigma_{u}E_{i_{0},i_{0}}-\sigma_{i_{0}}E_{i_{0},u}\right)^{m^{2}}\in IA^{m}.
\]
By the computation in the proof of Proposition \ref{prop:reduction1},
we obtain that
\[
\mu_{u,m^{2}}\in\sigma_{u}^{2}U_{u,m}+\sigma_{u}O_{m}+O_{m}^{2}
\]
and thus
\begin{eqnarray*}
\sigma_{u}\mu_{u,m^{2}} & \in & \sigma_{u}\left(\sigma_{u}^{2}U_{u,m}+\sigma_{u}O_{m}+O_{m}^{2}\right)\subseteq\sigma_{u}\left(\mathfrak{\bar{A}}\bar{H}_{m}+\bar{O}_{m}^{2}\right)\subseteq J_{m,u,i_{0}}\\
\sigma_{i_{0}}\mu_{u,m^{2}} & \in & \sigma_{i_{0}}\left(\sigma_{u}^{2}U_{u,m}+\sigma_{u}O_{m}+O_{m}^{2}\right)\subseteq J_{m,u,u}
\end{eqnarray*}
so $\zeta\in IA^{m}\cap\mathbb{J}_{m,u}$. In addition $\det\left(\zeta\right)=x_{u}^{m^{2}}$.
Therefore, $\overline{\alpha\cdot\prod_{v\neq u}\delta_{v}\zeta^{-s_{u}}}$,
writen as $I_{n}+\bar{C}$, has the following properties: 
\begin{itemize}
\item The entries of the $u$-th row of $\bar{C}$ are all $0$.
\item For every $i,v\neq u$ we have $\bar{c}_{i,v}\in\sigma_{u}\left(\mathfrak{\bar{A}}\bar{H}_{m}+\bar{O}_{m}^{2}\right)$,
so we can write $\bar{c}_{i,v}=\sigma_{u}d_{i,v}$ for some $d_{i,v}\in\mathfrak{\bar{A}}\bar{H}_{m}+\bar{O}_{m}^{2}$.
\item For every $1\leq i\leq n$ we have $\sum_{k=1}^{u}\sigma_{k}\bar{c}_{i,k}=0$,
so $\bar{c}_{i,u}=-\sum_{k=1}^{u-1}\sigma_{k}d_{i,k}$ .
\item $\det\left(I_{n}+\bar{C}\right)=1$.
\end{itemize}
In other words
\[
\bar{c}_{i,j}=\begin{cases}
0 & i=u\\
-\sum_{k=1}^{u-1}\sigma_{k}d_{i,k} & j=u\\
\sigma_{u}d_{i,j} & i,j\neq u
\end{cases}
\]
for some $d_{i,j}\in\mathfrak{\bar{A}}\bar{H}_{m}+\bar{O}_{m}^{2}$,
and $\det\left(I_{n}+\bar{C}\right)=1$.

Define now $\beta=\prod_{v\neq u}\delta_{v}\zeta^{-s_{u}}$, so $\beta\in IA^{m}\cap\mathbb{J}_{m,u}$.
In addition, define $\gamma$ to be the inverse of $\gamma^{-1}=I_{n}+\tilde{C}$
where
\[
\tilde{c}{}_{i,j}=\begin{cases}
0 & i=u\\
-\sum_{u\neq k=1}^{n}\sigma_{k}d_{i,k} & j=u\\
\sigma_{u}d_{i,j} & i,j\neq u.
\end{cases}
\]
is the $\left(i,j\right)$-th entry of $\tilde{C}$. Notice that $\gamma^{-1}\in IA\left(\Phi_{n}\right)$,
and that $\overline{\gamma^{-1}}=I_{n}+\bar{C}=\overline{\alpha\beta}$.
In addition
\[
\det(\gamma^{-1})=\det(I_{n}+\tilde{C})=\det(I_{n}+\bar{C})=1.
\]
Moreover, as $d_{i,j}\in\mathfrak{\bar{A}}\bar{H}_{m}+\bar{O}_{m}^{2}\subseteq H_{m}$,
$\gamma\in ISL_{n-1,u}\left(\sigma_{u}H_{m}\right)$. Additionally,
$\gamma\in\mathbb{J}_{m,u}$. Hence, we obtained $\beta\in IA^{m}\cap\mathbb{J}_{m,u}$
and $\gamma\in ISL_{n-1,u}\left(\sigma_{u}H_{m}\right)\cap\mathbb{J}_{m,u}$
such that $\overline{\gamma\alpha\beta}=I_{n}$, i.e. $\gamma\alpha\beta\in\tilde{\mathbb{A}}{}_{u}$,
as required.

\section{\label{sec:Inferences}Remarks and problems for further research}

We will prove now Theorem \ref{thm:full}, which asserts that $C\left(\Phi_{n}\right)$
is abelian for every $n\geq4$. But before, let us state the following
proposition, which is slightly more general than Lemma 2.1. in \cite{key-5},
but proven by similar arguments:
\begin{prop}
\label{prop:exact-1}Let $1\to G_{1}\overset{\alpha}{\to}G_{2}\overset{\beta}{\to}G_{3}\to1$
be a short exact sequence of groups. Assume also that $G_{1}$ is
finitely generated. Then:

1. The sequence $\hat{G}_{1}\overset{\hat{\alpha}}{\to}\hat{G}_{2}\overset{\hat{\beta}}{\to}\hat{G}_{3}\to1$
is also exact.

2. The kernel $\ker(\hat{G}_{1}\overset{\hat{\alpha}}{\to}\hat{G}_{2})$
is central in $\hat{G}_{1}$.
\end{prop}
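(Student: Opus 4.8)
The plan is to establish the two assertions separately; finite generation of $G_{1}$ is needed only for the second, and I would say so explicitly.

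\emph{Part 1.} I would first note that $\hat{\beta}$ is surjective and $\hat{\beta}\circ\hat{\alpha}=1$, both being formal consequences of the functoriality of profinite completion applied to the surjection $\beta$ and the relation $\beta\circ\alpha=1$. Since $\ker\hat{\beta}$ is a closed subgroup containing $\mathrm{im}\,\hat{\alpha}$, the only thing to prove is $\ker\hat{\beta}\subseteq\overline{\mathrm{im}\,\hat{\alpha}}$. For this I would use the correspondence between open normal subgroups of $\hat{G}_{2}$ and finite-index normal subgroups $U\vartriangleleft G_{2}$, under which $\hat{G}_{2}/\overline{U}\cong G_{2}/U$: an element $x\in\hat{G}_{2}$ lies in $\overline{\mathrm{im}\,\hat{\alpha}}$ if and only if, for every such $U$, the image of $x$ in $G_{2}/U$ lies in $\alpha(G_{1})U/U$. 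Now if $\hat{\beta}(x)=1$, then because $G_{2}/\alpha(G_{1})U\cong G_{3}/\beta(U)$ is a finite quotient of $G_{3}$, the image of $x$ there is trivial, which is exactly the condition just stated; hence $x\in\overline{\mathrm{im}\,\hat{\alpha}}$, and the sequence is exact.

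\emph{Part 2.} Write $C=\ker\hat{\alpha}$ and identify $G_{1}$ with $\alpha(G_{1})\vartriangleleft G_{2}$. Because $G_{1}$ is finitely generated, its finite-index characteristic subgroups are cofinal among all finite-index normal subgroups, so $\hat{G}_{1}=\underleftarrow{\lim}\,G_{1}/V$ over them; it therefore suffices to show that for every such $V$ and all $c\in C$, $g\in\hat{G}_{1}$, the image of $[c,g]$ in $G_{1}/V$ is trivial. Fix $V$. Since $V$ is characteristic in $G_{1}\vartriangleleft G_{2}$ it is normal in $G_{2}$, so $G_{2}$ acts by conjugation on the finite group $G_{1}/V$; the kernel of this action has finite index in $G_{2}$, hence contains a finite-index normal subgroup $U\vartriangleleft G_{2}$ with $[U,G_{1}]\subseteq V$. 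As $[G_{2}:U]<\infty$, $G_{1}\cap U$ has finite index in $G_{1}$, and the composite $\hat{G}_{1}\to\hat{G}_{2}\to G_{2}/U$ factors through the inclusion $G_{1}/(G_{1}\cap U)\hookrightarrow G_{2}/U$; since $c$ dies in $\hat{G}_{2}$, its image in $G_{1}/(G_{1}\cap U)$ is trivial, i.e.\ $c\in\overline{G_{1}\cap U}$ inside $\hat{G}_{1}$. Writing $c=\lim c_{i}$ and $g=\lim g_{i}$ with $c_{i}\in G_{1}\cap U$ and $g_{i}\in G_{1}$, continuity of the commutator map gives $[c,g]=\lim[c_{i},g_{i}]$ with each $[c_{i},g_{i}]\in[G_{1}\cap U,G_{1}]\subseteq V$; hence $[c,g]\in\overline{V}=\ker(\hat{G}_{1}\to G_{1}/V)$, so its image in $G_{1}/V$ is trivial. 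Letting $V$ range over all finite-index characteristic subgroups of $G_{1}$ gives $[c,g]=1$, so $C$ is central in $\hat{G}_{1}$.

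I expect the only delicate point to be the continuity step in Part 2 --- namely that $[\overline{G_{1}\cap U},\hat{G}_{1}]\subseteq\overline{[G_{1}\cap U,G_{1}]}$, i.e.\ that a commutator of limits is a limit of commutators of the approximating elements --- together with the observation that triviality in $\hat{G}_{1}$ can be tested on the finite quotients $G_{1}/V$, which is exactly where finite generation of $G_{1}$ is used (cofinality of characteristic finite-index subgroups). The rest is routine bookkeeping with the dictionary between open subgroups of a profinite completion and finite-index subgroups of the group.
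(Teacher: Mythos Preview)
Your proof is correct. Note, however, that the paper does not actually supply a proof of this proposition: it simply states that the result ``is slightly more general than Lemma~2.1 in \cite{key-5}, but proven by similar arguments'' and moves on. Your argument is precisely the standard one (and essentially the one in \cite{key-5}): for Part~1 you use the dictionary between open normal subgroups of $\hat{G}_{2}$ and finite-index normal subgroups of $G_{2}$ to test membership in $\overline{\mathrm{im}\,\hat{\alpha}}$, and for Part~2 you exploit that a finite-index characteristic $V\leq G_{1}$ is normal in $G_{2}$, so conjugation gives a map $G_{2}\to\mathrm{Aut}(G_{1}/V)$ with finite-index kernel, from which the commutator estimate $[\overline{G_{1}\cap U},\hat{G}_{1}]\subseteq\overline{[G_{1}\cap U,G_{1}]}\subseteq\overline{V}$ follows by continuity. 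Your observation that finite generation of $G_{1}$ enters only in Part~2 (via cofinality of finite-index characteristic subgroups) is also correct and worth recording.
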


\begin{proof}
(of Theorem \ref{thm:full}) By Proposition \ref{prop:exact-1}, the
commutative exact diagram
\[
\begin{array}{ccccccccc}
1 & \to & IA\left(\Phi_{n}\right) & \to & Aut\left(\Phi_{n}\right) & \to & GL_{n}\left(\mathbb{Z}\right) & \to & 1\\
 &  &  & \searrow & \downarrow &  & \downarrow\\
 &  &  &  & Aut(\hat{\Phi}_{n}) & \to & GL_{n}(\hat{\mathbb{Z}}) & .
\end{array}
\]
gives rise to the commutative exact diagram
\[
\begin{array}{ccccccc}
\widehat{IA\left(\Phi_{n}\right)} & \to & \widehat{Aut\left(\Phi_{n}\right)} & \to & \widehat{GL_{n}\left(\mathbb{Z}\right)} & \to & 1\\
 & \searrow & \downarrow &  & \downarrow\\
 &  & Aut(\hat{\Phi}_{n}) & \to & GL_{n}(\hat{\mathbb{Z}})
\end{array}
\]
Now, as $n\geq4$, by the CSP for $GL_{n}\left(\mathbb{Z}\right)$,
the map $\widehat{GL_{n}\left(\mathbb{Z}\right)}\to GL_{n}(\hat{\mathbb{Z}})$
is injective, so one obtains by diagram chasing, that $C\left(IA\left(\Phi_{n}\right),\Phi_{n}\right)=\ker(\widehat{IA\left(\Phi_{n}\right)}\to Aut(\hat{\Phi}_{n}))$
is mapped onto $C\left(\Phi_{n}\right)=\ker(\widehat{Aut\left(\Phi_{n}\right)}\to Aut(\hat{\Phi}_{n}))$
through the map $\widehat{IA\left(\Phi_{n}\right)}\to\widehat{Aut\left(\Phi_{n}\right)}$.
In particular, as by Theorem \ref{thm:main} $C\left(IA\left(\Phi_{n}\right),\Phi_{n}\right)$
is central in $\widehat{IA\left(\Phi_{n}\right)}$ for every $n\geq4$,
it is also abelian, and thus $C\left(\Phi_{n}\right)$ is an image
of an abelian group, and therfore abelian, as required. 
\end{proof}
\begin{problem}
\label{prob:Is1}Is $C\left(\Phi_{n}\right)$ not finitely generated?
trivial? 
\end{problem}

We proved in \cite{key-14} that $C\left(IA\left(\Phi_{n}\right),\Phi_{n}\right)$
is not finitely generated for every $n\geq4$. This may suggest that
also $C\left(\Phi_{n}\right)$ is not finitely generated, or at least,
not trivial. Moreover, if $C\left(IA\left(\Phi_{n}\right),\Phi_{n}\right)$
were not central in $\widehat{IA\left(\Phi_{n}\right)}$, we could
use the fact that $IA\left(\Phi_{n}\right)$ is finitely generated
for every $n\geq4$ \cite{key-24}, and by the second part of Proposition
\ref{prop:exact-1} we could derive that the image of $C\left(IA\left(\Phi_{n}\right),\Phi_{n}\right)$
in $\widehat{Aut\left(\Phi_{n}\right)}$ is not trivial. However,
we showed that $C\left(IA\left(\Phi_{n}\right),\Phi_{n}\right)$ is
central in $\widehat{IA\left(\Phi_{n}\right)}$, so it is possible
that $C\left(IA\left(\Phi_{n}\right),\Phi_{n}\right)\subseteq\ker(\widehat{IA\left(\Phi_{n}\right)}\to\widehat{Aut\left(\Phi_{n}\right)})$
and thus $C\left(\Phi_{n}\right)$ is trivial.

We saw in \cite{key-14} that for every $i$ there is a natural surjective
map 
\[
\hat{\rho}_{i}:\widehat{IA\left(\Phi_{n}\right)}\twoheadrightarrow\widehat{GL_{n-1}\left(\mathbb{Z}[x_{i}^{\pm1}],\sigma_{i}\mathbb{Z}[x_{i}^{\pm1}]\right)}.
\]
These maps enabled us to show in \cite{key-14} that for every $n\geq4$,
$C\left(IA\left(\Phi_{n}\right),\Phi_{n}\right)$ can be written as
\[
C\left(IA\left(\Phi_{n}\right),\Phi_{n}\right)=(C\left(IA\left(\Phi_{n}\right),\Phi_{n}\right)\cap_{i=1}^{n}\ker\hat{\rho}_{i})\rtimes\prod_{i=1}^{n}C_{i}
\]
where
\begin{eqnarray*}
C_{i} & \cong & \ker(\widehat{GL_{n-1}\left(\mathbb{Z}[x_{i}^{\pm1}],\sigma_{i}\mathbb{Z}[x_{i}^{\pm1}]\right)}\to GL_{n-1}(\widehat{\mathbb{Z}[x_{i}^{\pm1}]}))\\
 & \cong & \ker(\widehat{SL_{n-1}\left(\mathbb{Z}[x_{i}^{\pm1}]\right)}\to SL_{n-1}(\widehat{\mathbb{Z}[x_{i}^{\pm1}]})).
\end{eqnarray*}
are central in $\widehat{IA\left(\Phi_{n}\right)}$. Here we showed
that also $C\left(IA\left(\Phi_{n}\right),\Phi_{n}\right)\cap_{i=1}^{n}\ker\hat{\rho}_{i}$
lie in the center of $\widehat{IA\left(\Phi_{n}\right)}$ but we still
do not know to determine whether:
\begin{problem}
\label{prob:Is2}Is $C\left(IA\left(\Phi_{n}\right),\Phi_{n}\right)=\prod_{i=1}^{n}C_{i}$
or does it contain more elements?
\end{problem}

It seems that having the answer to Problem \ref{prob:Is2} will help
to solve Problem \ref{prob:Is1}.

\section{Index of notations}
\begin{itemize}
\item $F_{n}$ = the free group on $n$ elements, Section \ref{sec:structure}.
\item $\Phi_{n}=F_{n}/F''_{n}$= the free metabelian group on $n$ elements,
Section \ref{sec:structure}.
\item $\Phi_{n,m}=\Phi_{n}/M_{n,m}$, where $M_{n,m}=\left(\Phi'_{n}\Phi_{n}^{m}\right)'\left(\Phi'_{n}\Phi_{n}^{m}\right)^{m}$,
Section \ref{sec:structure}.
\item $IA(\Phi_{n})=\ker\left(Aut\left(\Phi_{n}\right)\to Aut\left(\Phi_{n}/\Phi_{n}'\right)\right)$,
Section \ref{sec:structure}.
\item $IG_{m}=IG_{n,m}=G(M_{n,m})=\ker\left(IA\left(\Phi_{n}\right)\to Aut\left(\Phi_{n,m}\right)\right)$,
Section \ref{sec:structure}.
\item $IA^{m}=IA_{n}^{m}=\left\langle IA\left(\Phi_{n}\right)^{m}\right\rangle $,
Section \ref{sec:structure}.
\item $IA_{n,m}=\cap\left\{ N\vartriangleleft IA\left(\Phi_{n}\right)\,|\,[IA\left(\Phi_{n}\right):N]\,|\,m\right\} $,
Section \ref{sec:structure-1}.
\item $R=R_{n}=\mathbb{Z}[x_{1}^{\pm1},\ldots,x_{n}^{\pm1}]$ where $x_{1},\ldots,x_{n}$
are free commutative variables, Section \ref{sec:structure}.
\item $\mathbb{Z}_{m}=\mathbb{Z}/m\mathbb{Z}$, Section \ref{sec:structure}.
\item $\sigma_{i}=x_{i}-1$ for $1\leq i\leq n$, Section \ref{sec:structure}.
\item $\vec{\sigma}$ = the column vector which has $\sigma_{i}$ in its
$i$-th entry, Section \ref{sec:structure}.
\item $\mu_{r,m}=\sum_{i=0}^{m-1}x_{r}^{i}$, Section \ref{sec:elementary}.
\item $\mathfrak{A}=\mathfrak{A}_{n}=\sum_{i=1}^{n}\sigma_{i}R_{n}\vartriangleleft R_{n}$
= the augmentation ideal of $R_{n}$, Section \ref{sec:structure}.
\item $\mathfrak{\bar{A}}=\mathfrak{A}_{u}=\sum_{i=1}^{u}\sigma_{i}R_{u}\vartriangleleft R_{u}$,
where $1\leq u\leq n$, Subsection \ref{subsec:5.2}.
\item $\bar{\bar{\mathfrak{A}}}=\mathfrak{A}_{u-1}=\sum_{i=1}^{u-1}\sigma_{i}R_{u-1}\vartriangleleft R_{u-1}$,
where $1\leq u\leq n$, Subsection \ref{subsec:5.2}.
\item $\mathfrak{\tilde{A}}_{u}=\sum_{r=u+1}^{n}\sigma_{r}R_{n}\vartriangleleft R_{n}$,
where $0\leq u\leq n$, Subsection \ref{subsec:5.1}.
\item $\tilde{\mathbb{A}}_{u}=IA\left(\Phi_{n}\right)\cap GL_{n}(R,\mathfrak{\tilde{A}}_{u})$,
where $0\leq u\leq n$, Subsection \ref{subsec:5.1}.
\item $O_{m}=mR_{n}\vartriangleleft R_{n}$, Section \ref{sec:The-main-lemma}.
\item $\bar{O}_{m}=mR_{u}\vartriangleleft R_{u}$, where $1\leq u\leq n$,
Subsection \ref{subsec:5.2}.
\item $\bar{\bar{O}}_{m}=mR_{u-1}\vartriangleleft R_{u-1}$, where $1\leq u\leq n$,
Subsection \ref{subsec:5.2}.
\item $U_{r,m}=\mu_{r,m}R_{n}\vartriangleleft R_{n}$, Section \ref{sec:The-main-lemma}.
\item $\bar{U}_{r,m}=\mu_{r,m}R_{u}\vartriangleleft R_{u}$, where $1\leq u\leq n$,
Subsection \ref{subsec:5.2}.
\item $\bar{\bar{U}}_{r,m}=\mu_{r,m}R_{u-1}\vartriangleleft R_{u-1}$, where
$1\leq u\leq n$, Subsection \ref{subsec:5.2}.
\item $H_{m}=H_{n,m}=\sum_{i=1}^{n}\left(x_{i}^{m}-1\right)R_{n}+mR_{n}\vartriangleleft R_{n}$,
Section \ref{sec:structure}.
\item $\bar{H}_{m}=H_{u,m}=\sum_{r=1}^{u}\sigma_{r}\bar{U}_{r,m}+\bar{O}_{m}\vartriangleleft R_{u}$,
where $1\leq u\leq n$, Subsection \ref{subsec:5.2}.
\item $J_{m}=\sum_{r=1}^{n}\sigma_{r}^{3}U_{r,m}+\mathfrak{A}^{2}O_{m}+\mathfrak{A}O_{m}^{2}\vartriangleleft R_{n}$,
Subsection \ref{subsec:5.1}.
\item $\mathbb{\mathbb{J}}_{m}=\left\{ I_{n}+A\,|\,\begin{array}{c}
I_{n}+A\in IA\left(\Phi_{n}\right)\cap GL_{n}\left(R,J_{m}\right)\\
\det\left(I_{n}+A\right)=\prod_{r=1}^{n}x_{r}^{s_{r}m^{2}},\,\,s_{r}\in\mathbb{Z}
\end{array}\right\} $, Subsection \ref{subsec:5.1}.
\item $\tilde{J}_{m,u,v}=\begin{cases}
\mathfrak{\tilde{A}}_{u}\left(\sum_{r=1}^{u}\mathfrak{A}\sigma_{r}U_{r,m}+\mathfrak{A}O_{m}+O_{m}^{2}\right)+\\
\sum_{r=u+1}^{n}\sigma_{r}^{3}U_{r,m} & v\leq u\\
\mathfrak{\tilde{A}}_{u}\left(\sum_{r=1}^{u}\mathfrak{A}\sigma_{r}U_{r,m}+\mathfrak{A}O_{m}+O_{m}^{2}\right)+\\
\sum_{v\neq r=u+1}^{n}\sigma_{r}^{3}U_{r,m}+\mathfrak{A}\sigma_{v}^{2}U_{v,m} & v>u
\end{cases}$, \\
\\
where $0\leq u\leq n$ and $1\leq v\leq n$, Subsection \ref{subsec:5.1}.
\item $\mathbb{\mathbb{\tilde{J}}}_{m,u}=\left\{ I_{n}+A\in IA\left(\Phi_{n}\right)\,|\,\begin{array}{c}
\det\left(I_{n}+A\right)=\prod_{i=1}^{n}x_{i}^{s_{i}m^{2}}\textrm{,\,\,every\,\,entry\,\,in}\\
\textrm{the\,\,}v\textrm{-th\,\,colmun\,\,of\,\,}A\textrm{\,\,belongs\,\,to\,\,}\tilde{J}_{m,u,v}
\end{array}\right\} $, \\
\\
where $0\leq u\leq n$, Subsection \ref{subsec:5.1}.
\item $J_{m,u,v}=\begin{cases}
\mathfrak{A}\left(\sum_{r=1}^{u}\mathfrak{A}\sigma_{r}U_{r,m}+\mathfrak{A}O_{m}+O_{m}^{2}\right)+\\
\sum_{r=u+1}^{n}\sigma_{r}^{3}U_{r,m} & v\leq u\\
\mathfrak{A}\left(\sum_{r=1}^{u}\mathfrak{A}\sigma_{r}U_{r,m}+\mathfrak{A}O_{m}+O_{m}^{2}\right)+\\
\sum_{v\neq r=u+1}^{n}\sigma_{r}^{3}U_{r,m}+\mathfrak{A}\sigma_{v}^{2}U_{v,m} & v>u
\end{cases}$, \\
\\
where $0\leq u\leq n$ and $1\leq v\leq n$, Subsection \ref{subsec:5.1}.
\item $\mathbb{J}_{m,u}=\left\{ I_{n}+A\in IA\left(\Phi_{n}\right)\,|\,\begin{array}{c}
\det\left(I_{n}+A\right)=\prod_{i=1}^{n}x_{i}^{s_{i}m^{2}}\textrm{,\,\,every\,\,entry\,\,in}\\
\textrm{the\,\,}v\textrm{-th\,\,colmun\,\,of\,\,}A\textrm{\,\,belongs\,\,to\,\,}J_{m,u,v}
\end{array}\right\} $, \\
\\
where $0\leq u\leq n$, Subsection \ref{subsec:5.1}.
\item $E_{d}\left(R\right)=\left\langle I_{d}+rE_{i,j}\,|\,r\in R,\,1\leq i\neq j\leq d\right\rangle \leq SL_{d}\left(R\right)$,
where $R$ is a ring and $E_{i,j}$ is the matrix that has $1$ in
its $\left(i,j\right)$-th entry and $0$ elsewhere, Section \ref{sec:structure}.
\item $SL_{d}\left(R,H\right)=\ker\left(SL_{d}\left(R\right)\to SL_{d}\left(R/H\right)\right)$,
where $R$ is a ring and $H\vartriangleleft R$, Section \ref{sec:structure}.
\item $GL_{d}\left(R,H\right)=\ker\left(GL_{d}\left(R\right)\to GL_{d}\left(R/H\right)\right)$,
where $R$ is a ring and $H\vartriangleleft R$, Section \ref{sec:structure}.
\item $E_{d}\left(R,H\right)$ = the normal subgroup of $E_{d}\left(R\right)$,
generated as a normal subgroup by the matrices of the form $I_{d}+hE_{i,j}$
for $h\in H$, Section \ref{sec:structure}.
\item $IGL_{n-1,i}=\left\{ I_{n}+A\in IA\left(\Phi_{n}\right)\,|\,\begin{array}{c}
\textrm{The\,\,}i\textrm{-th\,\, row\,\, of\,\,}A\textrm{\,\, is\,\,0,}\\
I_{n-1}+A_{i,i}\in GL_{n-1}\left(R_{n},\sigma_{i}R_{n}\right)
\end{array}\right\} $, \\
\\
for $1\leq i\leq n$, Section \ref{sec:structure}.
\item $ISL_{n-1,i}\left(H\right)=IGL_{n-1,i}\cap SL_{n-1}\left(R_{n},H\right)$,
under the identification of $IGL_{n-1,i}$ with $GL_{n-1}\left(R_{n},\sigma_{i}R_{n}\right)$,
Section \ref{sec:structure}.
\item $IE_{n-1,i}\left(H\right)=IGL_{n-1,i}\cap E{}_{n-1}\left(R_{n},H\right)$,
under the identification of the group $IGL_{n-1,i}$ with $GL_{n-1}\left(R_{n},\sigma_{i}R_{n}\right)$,
Section \ref{sec:structure}.
\end{itemize}

Institute of Mathematics\\
The Hebrew University\\
Jerusalem, ISRAEL 91904\\
\\
davidel-chai.ben-ezra@mail.huji.ac.il\\

\end{document}